\newcommand{\lvt}{\left|\kern-1.35pt\left|\kern-1.3pt\left|}
\newcommand{\rvt}{\right|\kern-1.3pt\right|\kern-1.35pt\right|}
\newtheorem{thm}{Theorem}[section]
\newtheorem{prop}[thm]{Proposition}
\theoremstyle{remark}
 \def\a{{\alpha}}
 \def\b{{\beta}}
 \def\g{{\gamma}}
 \def\d{\mathrm{d}}
 \def\la{{\langle}}
 \def\ra{{\rangle}}
 \def\CB{{\mathcal B}}
 \def\CV{{\mathcal V}}
 \def\CC{{\mathbb C}}
 \def\NN{{\mathbb N}}
 \def\PP{{\mathbb P}}
 \def\RR{{\mathbb R}}
 \def\YY{{\mathbb Y}}
\begin{document}

\title{Orthogonal polynomials on a class of planar algebraic curves}

\author{Marco Fasondini}
\address{School of Computing and Mathematical Sciences\\
University of Leicester\\
 United Kingdom  }\email{m.fasondini@leicester.ac.uk}

\author{Sheehan Olver}
\address{Department of Mathematics\\
Imperial College\\
 London \\
 United Kingdom  }\email{s.olver@imperial.ac.uk}

\author{Yuan Xu}
\address{Department of Mathematics\\ University of Oregon\\
    Eugene, Oregon 97403-1222.}\email{yuan@uoregon.edu}

\date{\today}

\begin{abstract}
We construct bivariate orthogonal polynomials (OPs) on algebraic curves of the form $y^{m} = \phi(x)$ in $\RR^2$ where $m = 1, 2$ and $\phi$ is a polynomial of arbitrary degree $d$, in terms of univariate semiclassical OPs. We compute connection coefficients that relate the bivariate OPs to a polynomial basis that is itself orthogonal and whose span contains the OPs as a subspace. 
The connection matrix is shown to be banded and the connection coefficients and Jacobi matrices for OPs of degree $0, \ldots, N$ are computed via the Lanczos algorithm in $\mathcal{O}(Nd^4)$ operations. 
\end{abstract}
\maketitle

\section{Introduction}
\setcounter{equation}{0}

In previous works, we studied orthogonal polynomials (OPs) on planar curves and on quadratic surfaces of revolution. The OPs were constructed explicitly on the wedge and the square~\cite{OX1}, planar quadratic curves~\cite{OX2}, cubic curves~\cite{FOX}, as well as on and inside quadratic surfaces of revolution~\cite{OX3}.  These OPs were used in applications, such as the statistics of determinantal point processes, the computation  of Stieltjes transforms, multivariate function approximation and orthogonal series~\cite{Xu1,Xu2,Xu3,Xu4,Xu5,Xu6},  the approximation of singular or nearly singular functions of one variable (including those that arise as solutions to differential equations), the explicit solution to wave equations~\cite{OX4}, spectral methods for partial differential equations on trapeziums, disk slices and spherical caps~\cite{SO1,SO2} and a spectral method for fractional integral equations~\cite{pu}. 

In this paper we generalise both the types of curves on which OPs are constructed as well as their method of construction, in particular to OPs defined on curves of the form $y^{m} = \phi(x)$ in $\RR^2$ where $m = 1, 2$ and $\phi$ is a polynomial of arbitrary degree $d$. Unlike the previously mentioned studies, our constructions of OPs are computational, not explicit. However, on quadratic and cubic curves, our computational method reproduces the OP bases given in~\cite{OX2,FOX} and in section~\ref{sec:exOPs} we give explicit bases for a few new special cases. Our approach is first to  explicitly construct a polynomial basis whose span contains the bivariate OPs as a subspace, then we cast the computation of the OPs as a numerical linear algebra problem, namely, simultaneously block-tridiagonalize two symmetric, commuting matrices, which we solve with  the Lanczos algorithm. This is a very general approach that allows us to construct OPs (i) on certain algebraic curves (the topic of this paper) and also (ii) inside those curves (two-dimensional domains), (iii) their surfaces of revolution (three-dimensional regions) and (iv) associated star domains (2D and 3D regions). The construction of OPs on  (ii)--(iv) will be considered elsewhere. Sparse spectral methods for PDEs is an application of the OPs we construct that is of special interest and will be pursued in future research.


\noindent {\bf Acknowledgment}. The first and second authors were supported by the Leverhulme Trust Research Project Grant RPG-2019-144 ``Constructive approximation theory on and inside algebraic curves and surfaces''. The second author was also supported by the Engineering and
Physical Sciences Research Council (EPSRC) Standard Grant EP/T022132/1. The third author was partially supported by Simons Foundation Grant \#849676.


\section{Univariate OPs} 
\setcounter{equation}{0}
Throughout, we shall denote a univariate, orthonormal, \mbox{degree-$n$} OP that is orthogonal  with respect to the inner product
\begin{equation}
\la f, g\ra_{\omega} = \int f(x)g(x)\omega(x)\d x, \label{eq:1dip}
\end{equation}
where $\omega$ is a non-negative weight function,  by $p_n(\omega ; x)$, or $p_n(\omega)$. We shall also make use of quasimatrix~\cite{Tref} notation. For example, the quasimatrix
\begin{align}
\mathbf{P}(\phi w)  = \left(
\begin{array}{c c c c}
p_0(\phi w) & p_1(\phi w) & p_2(\phi w) & \cdots
\end{array}
\right),  \label{eq:qmex}
\end{align}
where $\phi = \phi(x)$ is polynomial that is strictly positive on the support of the weight $w$, is a `matrix' with a countable infinity of columns consisting of OPs with respect to the weight $\phi w$, ordered by degrees. 

We shall let $w$ be a classical weight, i.e.,  a Jacobi, Laguerre or Hermite weight~\cite[Ch.~18]{NIST:DLMF} with support $\mathrm{supp}(w) = (-1, 1), (0, \infty)$ and $(-\infty, \infty)$, respectively. If all the roots of the polynomial $\phi$ (with $\phi(x)>0$ for $x \in \mathrm{supp}(w)$) are at the endpoints of $\mathrm{supp}(w)$, then $\phi w$ is also a classical weight, otherwise at least one of the roots of $\phi$ are off $\mathrm{supp}(w)$ and $\phi w$ is a semiclassical weight~\cite{hrsemiclass} and we refer to the $p_n(\phi w)$ as (orthonormal) semiclassical OPs. 

The fundamental building blocks of our constructions of bivariate OPs will be Jacobi matrices and a `raising matrix' of univariate OPs, which we shall now define. Recall that if a family of OPs, e.g., $p_n(\phi w)$, $n \geq 0$,  is orthonormal with respect to an inner product $\la \cdot, \cdot \ra_{\phi w}$, then they satisfy a three-term recurrence of the form
\begin{align}
xp_n(\phi w) = \beta_{n-1} p_{n-1}(\phi w) + \alpha_{n} p_{n}(\phi w) + \beta_{n} p_{n+1}(\phi w), \qquad n \geq 0, \label{eq:sc3term}
\end{align}
where $\beta_{-1} = 0 = p_{-1}(\phi w)$ and the recurrence coefficients are
\begin{equation}
\alpha_{n} = \alpha_{n}(\phi w) = \la xp_n(\phi w), p_n(\phi w) \ra_{\phi w}, \qquad \beta_n = \beta_n(\phi w) =  \la xp_n(\phi w), p_{n+1}(\phi w) \ra_{\phi w}.  \label{eq:screccoeffs}
\end{equation}
Since $w$ is a classical weight and $\phi(x) > 0$ for $x \in \mathrm{supp}(w)$, $\alpha_n(\phi w), \beta_n(\phi w) \in \mathbb{R}$ for $n \geq 0$.
 In quasimatrix notation, (\ref{eq:sc3term}) becomes
\begin{equation}
x\mathbf{P}(\phi w) = \mathbf{P}(\phi w)J(\phi w), \label{eq:qmjacop}
\end{equation}  
where $J(\phi w)$ is an infinite, symmetric, tridiagonal matrix known as the Jacobi matrix of the OPs,
\begin{equation}
J(\phi w) = \left(
\begin{array}{c c c c}
\alpha_0(\phi w) & \beta_0(\phi w)  & & \\
\beta_0(\phi w)  & \alpha_1(\phi w) & \beta_1(\phi w) & \\
         & \beta_1(\phi w)  & \alpha_2(\phi w)  & \ddots \\
         &          & \ddots  & \ddots
\end{array}
\right).  \label{eq:jacphiw}
\end{equation}

We say that a (finite or infinite) matrix  $A$ is banded and has bandwidths $(\lambda, \mu)$, $\lambda, \mu \geq 0$ if $A_{i,j} = 0$ for $i-j>\lambda$ and $j-i> \mu$ and the same applies for block-matrices (in which case $A_{i,j}$ refers to block $(i,j)$ of $A$). Hence, $J(\phi w)$ has bandwidths $(1, 1)$.
\begin{prop}
If $\deg \phi = d$, then the OPs $\{p_n(w) \}$ and $\{ p_n(\phi w) \}$ are related via a banded raising matrix $R$ with bandwidths $(0,d)$ according to
\begin{equation}
\mathbf{P}(w) = \mathbf{P}(\phi w)R,  \label{eq:raiseop}
\end{equation}
where $\mathbf{P}(\phi w)$ is given in (\ref{eq:qmex}) and  $\mathbf{P}(w)$ is defined analogously and
\begin{equation}
R = \left(
\begin{array}{c c c c c c c}
r_{0,0} &  \cdots & \cdots  & r_{0,d}   &   &   & \\
        & r_{1,1} & \cdots   & \cdots & r_{1,d+1} & & \\
        &         & r_{2,2}   &  \cdots & \cdots & r_{2,d+2} &   \\
        &         &           & \ddots   &   \ddots  & \ddots  & \ddots
\end{array}
\right),  \label{eq:raisestruct}
\end{equation}
where
\begin{equation}
r_{k,n} = \langle p_n(w), p_k(\phi w) \rangle_{\phi w} = \langle p_n(w), \phi p_k(\phi w) \rangle_{ w}.  \label{eq:rkndef}
\end{equation}
\end{prop}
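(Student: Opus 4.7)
The plan is to establish three things in sequence: (i) that the claimed formula for $r_{k,n}$ is the Fourier expansion coefficient of $p_n(w)$ in the orthonormal basis $\{p_k(\phi w)\}_{k\ge 0}$, (ii) that the two forms of the inner product in \eqref{eq:rkndef} agree, and (iii) that $r_{k,n}=0$ outside the claimed band, which immediately yields the matrix structure in \eqref{eq:raisestruct}.

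First, since $\phi > 0$ on $\mathrm{supp}(w)$, the family $\{p_k(\phi w)\}_{k \ge 0}$ is a complete orthonormal system in $L^2(\phi w)$ and in particular spans the space of polynomials. So I expand
\[
p_n(w) = \sum_{k \ge 0} r_{k,n}\, p_k(\phi w), \qquad r_{k,n} = \langle p_n(w),\, p_k(\phi w)\rangle_{\phi w}.
\]
In quasimatrix notation this is precisely $\mathbf{P}(w) = \mathbf{P}(\phi w)R$ with $R_{k,n}=r_{k,n}$. The second equality in \eqref{eq:rkndef} is then just the definition \eqref{eq:1dip} applied to the weight $\phi w$: $\int p_n(w)\, p_k(\phi w)\,\phi w\,\d x = \int p_n(w)\,\bigl(\phi p_k(\phi w)\bigr)\, w\,\d x$.

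For the bandedness, I use a degree argument on each of the two representations of $r_{k,n}$. Using the first representation, note that $p_n(w)$ has degree $n$ and is therefore orthogonal in $\langle\cdot,\cdot\rangle_{\phi w}$ to every element of $\mathbf{P}(\phi w)$ of degree exceeding $n$; hence $r_{k,n}=0$ for $k > n$, which gives the upper-triangular pattern in \eqref{eq:raisestruct} (i.e.\ the lower bandwidth $0$). Using the second representation, $\phi\, p_k(\phi w)$ is a polynomial of degree $k+d$, and $p_n(w)$ is orthogonal (in $\langle\cdot,\cdot\rangle_w$) to every polynomial of degree less than $n$; hence $r_{k,n}=0$ whenever $k+d < n$, that is, for $n > k+d$. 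Combining, the only possibly non-zero entries are those with $k \le n \le k+d$, matching \eqref{eq:raisestruct} and confirming the bandwidths $(0,d)$.

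There is no real obstacle here; the whole argument is a one-step projection onto an orthonormal polynomial basis followed by two degree/orthogonality observations. The only thing to watch is consistency with the matrix-bandwidth convention stated just before the proposition (rows indexed by $k$, columns by $n$, so "upper bandwidth $d$" corresponds to $n - k \le d$), which is exactly what the degree arguments deliver.
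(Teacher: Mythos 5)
Your proposal is correct and follows essentially the same route as the paper: expand $p_n(w)$ in the orthonormal basis $\{p_k(\phi w)\}$ to get the coefficients $r_{k,n}$ as inner products, then use the two degree/orthogonality observations (degree of $p_n(w)$ for $k>n$, degree of $\phi\,p_k(\phi w)$ for $n>k+d$) to obtain the bandwidths $(0,d)$. The paper's proof is just a more compressed statement of the same argument, so no further comparison is needed.
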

\begin{proof}
Since any polynomial of degree $n$ is a linear combination of the $p_k(\phi w)$ with $0 \leq k \leq n$, we have $p_n(w) = \sum_{k = 0}^n r_{k,n} p_{k}(\phi w)$, where the expression (\ref{eq:rkndef}) and the fact that $r_{k,n} = 0$ for $0 \leq k < n-d$ follow since the $p_k(\phi w)$ and $p_n(w)$ are (orthonormal) OPs. 
\end{proof}

In quasimatrix notation, we write
\begin{equation}
R = \langle \mathbf{P}(\phi w), \mathbf{P}( w) \rangle_{\phi w} =  \int  \mathbf{P}^{\top}(\phi w) \mathbf{P}( w) \phi w \, \mathrm{d} x, \label{eq:qmipd}
\end{equation}
where $ \mathbf{P}^{\top}(\phi w) \mathbf{P}( w)$
is interpreted as an outer product of two (infinite) row vectors whose entries are functions and integration is performed entry-wise. Therefore,  $\langle \mathbf{P}(\phi w), \mathbf{P}( w) \rangle_{\phi w}$ is an infinite matrix whose entries are defined as follows: Let $\bm{e}_k$ be the standard basis vector
\begin{equation}
\bm{e}_k = \Big( \underbrace{0 \: \cdots \: 0}_{k \text{ zeros}} \: 1 \: 0 \: 0 \: \cdots \Big)^{\top} \label{eq:basisvec}
\end{equation}
then the $(k,n)$ entry of  $\langle \mathbf{P}(\phi w), \mathbf{P}( w) \rangle_{\phi w}$ for $k, n \geq 0$ is (cf.~(\ref{eq:rkndef}))
\begin{equation}
\left(\langle \mathbf{P}(\phi w), \mathbf{P}( w) \rangle_{\phi w}\right)_{k,n} =  \int  \mathbf{P}(\phi w)\bm{e}_k \mathbf{P}( w)\bm{e}_{n} \phi w \, d x = \langle p_n(w), p_k(\phi w) \rangle_{\phi w} = r_{k,n}.  \label{eq:qmipent}
\end{equation}
Similarly, we can express the Jacobi matrix as $J(\phi w) = \langle \mathbf{P}(\phi w), x\mathbf{P}(\phi w) \rangle_{\phi w}$ and since the OPs $\{p_n(w) \}$ and $\{ p_n(\phi w) \}$ are orthonormal, $\langle \mathbf{P}(\phi w), \mathbf{P}(\phi w) \rangle_{\phi w} = I = \langle \mathbf{P}( w), \mathbf{P}( w) \rangle_{ w}$, where $I$ is an infinite identity matrix.

The coefficients $r_{k,n}$ of the raising matrix $R$ can also be viewed as the connection coefficients relating the classical OP basis $\{p_n(w) \}$ to the semiclassical OP basis $\{p_n(\phi w) \}$. Algorithms for computing connection coefficients between (mainly univariate) classical OP families with different weights are discussed in~\cite{olver2020fast}. The more general problem of relating the OP families $\{ p_n(r w) \}$ and $\{ p_n( w) \}$, where $r = r(x)$ is a rational function, and determining the recurrence coefficients of the former OP family is discussed in~\cite[sect.~2.4]{Gautschi} and associated algorithms are considered in~\cite[Ch.~5]{golub}. For our purposes (i.e., the case $r(x) = \phi(x)$), we note that one method for computing $R$ and $J(\phi w)$ is via the Cholesky factor $L$ of the symmetric, positive definite matrix
 \begin{equation*}
\Phi = \langle \mathbf{P}(w), \mathbf{P}(w)\rangle_{\phi w} = \int \mathbf{P}(w)^{\top} \mathbf{P}(w) \phi w \,\mathrm{d}x = LL^{\top}
\end{equation*}
because from (\ref{eq:raiseop}) we have that
\begin{equation*}
I = \langle \mathbf{P}(\phi w), \mathbf{P}(\phi w)\rangle_{ w} = \int \mathbf{P}(\phi w)^{\top} \mathbf{P}(\phi w)  \phi w \,\mathrm{d}x = R^{-\top}\Phi R^{-1}  = R^{-\top}LL^{\top} R^{-1}
\end{equation*}
from which we conclude that
\begin{equation}
R = L^{\top}, \qquad J(\phi w) =R^{\top} J(w) R,  \label{eq:Rchol}
\end{equation}
where the second identity follows from (\ref{eq:qmjacop}) and (\ref{eq:raiseop}).  Another method for computing $R$ and $J(\phi w)$ given $J(w)$ is to use Christoffel's theorem~\cite[sect.~2.4]{Gautschi}. 


 
The topic of this paper is also the computation of connection coefficients (and, as a byproduct, recurrence coefficients will also be computed), however we shall relate bivariate OPs (restricted to curves) to a polynomial basis that is orthogonal with respect to the \emph{same} weight as the bivariate OPs. As we shall see, not only is our connection problem complicated by the fact that it is bivariate but also because in general the degree of a bivariate polynomial on $\mathbb{R}^2$ is different from its degree when restricted to an algebraic curve in  $\mathbb{R}^2$.


\section{Bivariate OPs on $y = \phi$ and $y^2 = \phi$}
\setcounter{equation}{0}

Curves defined by $p(x,y) = 0$, where $p$ is an irreducible bivariate quadratic or cubic polynomial, can be simplified to the form $y^2 = \phi(x)$, with $\deg \phi \leq 2$ for quadratics and  $\deg \phi = 3$ for cubics, see \cite{OX2} and~\cite{FOX}, respectively. Through affine transformations, non-degenerate quadratics can be reduced to either a parabola ($\phi = x$), a circle ($\phi = 1 - x^2$), or a hyperbola ($\phi = x^2 - 1$). A cubic curve in the standard form $y^2 = \phi(x)$, $\deg \phi = 3$ is an elliptic curve if it has no cusps, isolated points or self-intersections; the OPs we constructed in~\cite{FOX} are valid on both elliptic and non-elliptic curves.  In this paper we  consider bivariate OPs on the following more general curves
\begin{align}
\g_{m,d} = \left\{ (x,y) \in \RR^2 :  \begin{cases}
y = \phi(x) &\text{if } m = 1 \\
y^2 = \phi(x) > 0 &\text{if } m = 2
\end{cases}, x \in \mathrm{supp}(w), \deg \phi = d > 0
\right\} \label{eq:mcurvedef} 
\end{align}
where $\mathrm{supp}(w)$ denotes the support of a non-negative weight function $w(x)$; either $\mathrm{supp}(w) = (a, b)$, $(a, \infty)$ or $(-\infty, \infty)$, where $-\infty < a < b <\infty$ and $\phi$ may vanish at $a$ or $b$. The restriction $\phi>0$ does not apply for $m = 1$ but  for $m=2$, the positivity of $\phi$ is necessary for the curve to be defined in $\mathbb{R}^2$.

For the special case $m = 1$, $d = 0$ and a classical weight $w$, the classical OPs are OPs on the curves $\g_{1,0}$. The case $m = 2$ and $d = 0$ is treated in \cite{OX2}. OPs on curves of the form $y^m = \phi$ with $m >2$, which  we leave for future work, will be discussed briefly in section~\ref{sect:conc}.

We consider OPs in $(x, y)$ that are restricted to the curve $\g_{m,d}$ and which are orthogonal with respect to the following inner product
$$
\la f, g \ra_{\g_{m,d},w} = \int_{\g_{m,d}} f(x,y) g(x,y) \omega(x,y)\mathrm{d} \sigma(x,y),
$$
where $\mathrm{d}\sigma$ is the arc length measure on $\g_{m,d}$. We incorporate the arc length into the non-negative weight function by setting $\omega(x,y) = w(x)/\sqrt{1 + (y'(x))^2}$ so that the inner product simplifies to
\begin{align} \label{eq:mipd}
  \la f,g \ra_{\g_{m,d},w} = \begin{cases}  
&     \displaystyle{\int}  f\left(x, \phi(x) \right) g\left(x, \phi(x) \right)  w(x) \d x,\hspace{2.15 cm} m = 1, \\  
  & \displaystyle{\int} \left[  f\left(x, \sqrt{\phi(x)} \right) g\left(x, \sqrt{\phi(x)} \right) \right.  \\
      & \left.\:\:\: + f\left(x, - \sqrt{\phi(x)} \right) g\left(x, -\sqrt{\phi(x)} \right)\right] w(x) \d x,\:\:\: m = 2, 
      \end{cases}
\end{align} 
where the integrals are over $\mathrm{supp}(w)$.  The bilinear form $\la\cdot, \cdot  \ra_{\g_{m,d}, w}$ defines an inner product on the space of polynomials of two variables restricted to the curve $\g_{m,d}$, 
\begin{equation*}
      \Pi(\g_{m,d}) := \RR[x,y]/ \la y^m - \phi (x)\ra,     
\end{equation*}
which denotes the space of bivariate polynomials modulo the ideal generated by $y^m - \phi(x) = 0$.

\begin{prop}\label{prop:orthdimmd}
Let $\CV_n(\g_{m,d}, w)$ (or $\CV_n$) be the space of bivariate OPs of degree $n$ with respect to the inner product (\ref{eq:mipd}) defined on $\Pi(\g_{m,d})$, then
\begin{equation}
\dim \CV_n(\g_{m,d}, w) = \min\{n+1, \max\{ d, m\} \}.  \label{eq:cvn}
\end{equation}
\end{prop}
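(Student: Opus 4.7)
The plan is to compute $\dim\CV_n$ as the telescoping difference $\dim\Pi_n(\g_{m,d})-\dim\Pi_{n-1}(\g_{m,d})$, where $\Pi_k(\g_{m,d})$ denotes the image of the space $\Pi_k\subset\RR[x,y]$ of bivariate polynomials of total degree at most $k$ under restriction to the curve. Non-degeneracy of the inner product on $\Pi(\g_{m,d})$---immediate for $m=1$ since $\RR[x,y]/(y-\phi)\cong\RR[x]$, and for $m=2$ following from $\la p+yq,\,p+yq\ra_{\g_{2,d},w}=2\int(p^2+\phi q^2)\,w\,\d x$ with $\phi>0$ on $\mathrm{supp}(w)$---makes $\CV_n$ the orthogonal complement of $\Pi_{n-1}(\g_{m,d})$ in $\Pi_n(\g_{m,d})$, so dimensions subtract. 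Since $\Pi_n(\g_{m,d})\cong\Pi_n/(\Pi_n\cap I)$ with $I=(y^m-\phi)\subset\RR[x,y]$ and $\dim\Pi_n=\binom{n+2}{2}$, the whole problem reduces to computing $\dim(\Pi_n\cap I)$.

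The key lemma to establish is that
\begin{equation*}
\Pi_n\cap (y^m-\phi) \;=\; (y^m-\phi)\cdot\Pi_{n-e},\qquad e := \max\{m,d\},
\end{equation*}
with $\Pi_{n-e}:=\{0\}$ when $n<e$. The inclusion $\supseteq$ is immediate from $\deg(y^m-\phi)=e$. For $\subseteq$, I would inspect the top-total-degree homogeneous part $H$ of $y^m-\phi$: it is $y^m$ when $m>d$, $-\phi_d x^d$ when $d>m$, and $y^m-\phi_d x^d$ when $m=d$, where $\phi_d$ is the leading coefficient of $\phi$. In every case $H$ is a non-zero homogeneous form of degree $e$, and since $\RR[x,y]$ is an integral domain, the product of $H$ with the top homogeneous component of any non-zero $q\in\RR[x,y]$ is a non-zero form of degree $e+\deg q$. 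Hence $\deg\bigl((y^m-\phi)q\bigr)=e+\deg q$, which forces $\deg q\leq n-e$ whenever $(y^m-\phi)q\in\Pi_n$. This leading-form argument is the only non-routine ingredient.

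Given the lemma, $\dim(\Pi_n\cap I)=\binom{n-e+2}{2}$ for $n\geq e$ and $0$ otherwise, so
\begin{equation*}
\dim\Pi_n(\g_{m,d}) = \binom{n+2}{2} - \binom{n-e+2}{2}\qquad (n\geq e),
\end{equation*}
and $\dim\Pi_n(\g_{m,d})=\binom{n+2}{2}$ for $n<e$. Taking the successive difference yields $(n+1)-\max\{n-e+1,0\}=\min\{n+1,e\}$, which is the asserted formula after substituting $e=\max\{m,d\}$. The remainder is routine binomial bookkeeping, and one should only check the boundary case $n=e$ separately, where $\dim\CV_n=\binom{e+2}{2}-1-\binom{e+1}{2}=e$ as required.
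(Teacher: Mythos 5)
Your proof is correct, but it takes a genuinely different route from the paper's. The paper argues combinatorially: it exhibits explicit monomial bases $\CB_n(\g_{m,d})$ for the degree-$n$ part of $\Pi(\g_{m,d})$ in three cases (using the relations $y^m=\phi$ to reduce $y^k$ for $k\geq m$ when $m>d$, or $x^d$ when $d\geq m$), counts them, and then invokes Gram--Schmidt to conclude $\dim\CV_n=\dim\CB_n$. You instead work with the quotient by the principal ideal $\la y^m-\phi\ra$ and reduce everything to the single algebraic fact that $\deg\bigl((y^m-\phi)q\bigr)=\max\{m,d\}+\deg q$, proved by inspecting leading homogeneous forms; the dimension formula then drops out of binomial telescoping. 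Your route has two advantages: it makes explicit the positive-definiteness of $\la\cdot,\cdot\ra_{\g_{m,d},w}$ on $\Pi(\g_{m,d})$ (the identity $\la p+yq,p+yq\ra_{\g_{2,d},w}=2\int(p^2+\phi q^2)w\,\d x$ with $\phi>0$), which the paper uses only implicitly when it applies Gram--Schmidt, and the leading-form lemma rigorously establishes the linear independence of the reduced monomials rather than asserting that the reduction terminates at an independent set. What the paper's approach buys, and yours does not, is the explicit bases (\ref{eq:Bgc1})--(\ref{eq:Bgc3}) themselves, which are reused repeatedly downstream (in Proposition~\ref{prop:curvepolyform}, in the minimal/maximal degree discussion, and in the orthogonalisation sequences of Section~\ref{sect:lanczos}); so your argument proves the dimension count cleanly but would need to be supplemented by the basis construction anyway for the rest of the paper. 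Both computations agree, including at the boundary case $n=e$.
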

\begin{proof}
We first consider the dimension of the space of monomials of degree $n$ on the curve $\g_{m,d}$, which we denote by $\CB_n(\g_{m,d})$. Three cases arise: (i) $n \leq \max\{d, m\} - 1$, (ii) $m > d, n \geq m$ and (iii) $m \leq d$, $n \geq d$. Since we only let $m = 1, 2$ and $d > 0$, case (ii) only arises if $m = 2$ and $d = 1$. 

For case(i), we have that
\begin{equation}
\CB_n(\g_{m,d}) = \displaystyle{\left\{ x^{n-k}y^{k} \right\}_{k=0}^{n}}, \qquad n \leq \max\{d, m\} - 1,  \label{eq:Bgc1}
\end{equation}
because the monomials $x^ky^{n-k}$ are linearly independent for $0 \leq k \leq n$ and thus $\dim \CB_n(\g_{m,d}) = n+1$. 

For  case (ii),
\begin{equation}
\CB_n(\g_{m,d}) = \left\{ x^{n-k}y^{k} \right\}_{k=0}^{m-1}, \qquad m > d, \qquad n \geq m, \label{eq:Bgc2}
\end{equation}
because we can set $y^m = \phi(x)$ and express the monomials $x^{n-k}y^{k}$ with $k \geq m$  in terms of the monomials in $\CB_k(\g_{m,d})$, with $0 \leq k \leq n$, hence only the monomials $x^{n-k}y^{k}$ with $0 \leq k \leq m-1$  are linearly independent and thus  $\dim \CB_n(\g_{m,d}) = m$. 

For the final case (iii),
\begin{equation}
\CB_n(\g_{m,d}) = \left\{ x^{d-1}y^{n+1-d}, x^{d-2}y^{n+2-d}, \ldots, y^n  \right\}, \qquad d \geq m, \qquad n \geq d, \label{eq:Bgc3}
\end{equation}
because we can use the fact that $y^m = c_dx^d + \cdots + c_1 x + c_0$, with $c_d \neq 0$ to set $x^d = \psi(x,y)$, where $\deg \psi < d$, and express $y^{n-k}x^{k}$ with $k \geq  d$ in terms of lower degree monomials, therefore $\dim \CB_n(\g_{m,d}) = d$

Applying the Gram--Schmidt process to the basis $\CB_n(\g_{m,d})$ in cases (i)--(iii), it follows that $\dim \CB_n(\g_{m,d}) = \dim \CV_n(\g_{m,d}, w)$, which completes the proof.
\end{proof}
\begin{prop}\label{prop:curvepolyform}
A bivariate polynomial $p(x, y) \in  \Pi(\g_{m,d})$ can be represented in terms of polynomials of one variable as follows,
\begin{equation}
p(x,y) = \begin{cases}
q(x) & \text{if } m = 1, \\
r(x) + ys(x) & \text{if } m = 2,
\end{cases}  \label{eq:maxdr}
\end{equation}
where $q(x), r(x), s(x)$ are polynomials in $x$.
\end{prop}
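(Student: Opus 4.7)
The plan is to exploit the defining relation $y^m = \phi(x)$ of the curve to reduce powers of $y$ in any bivariate monomial, treating this as a polynomial division argument modulo the ideal $\langle y^m - \phi(x)\rangle$.

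First I would write an arbitrary polynomial as $p(x,y) = \sum_{a,b} c_{a,b} x^a y^b$. For the case $m=1$, the relation $y = \phi(x)$ allows direct substitution: each monomial becomes $c_{a,b} x^a \phi(x)^b$, which is a polynomial in $x$ alone, so summing over $(a,b)$ yields a univariate polynomial $q(x)$ as required.

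For the case $m=2$, I would split the exponent of $y$ by parity, writing $b = 2k + \epsilon$ with $\epsilon \in \{0,1\}$, and apply the relation $y^2 = \phi(x)$ iteratively to obtain $y^{2k} = \phi(x)^k$ and $y^{2k+1} = y\,\phi(x)^k$. Collecting the contributions with $\epsilon = 0$ into a univariate polynomial $r(x) = \sum_{a,k} c_{a,2k}\, x^a \phi(x)^k$, and those with $\epsilon = 1$ into $s(x) = \sum_{a,k} c_{a,2k+1}\, x^a \phi(x)^k$, gives $p(x,y) \equiv r(x) + y\,s(x) \pmod{y^2 - \phi(x)}$, which is the desired form.

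There is no real obstacle here: the argument is essentially the standard division algorithm in $\mathbb{R}[x][y]$ with respect to the monic (in $y$) polynomial $y^m - \phi(x)$, which guarantees that every class in $\mathbb{R}[x,y]/\langle y^m - \phi(x)\rangle$ has a representative of degree at most $m-1$ in $y$, i.e.\ of the form $\sum_{j=0}^{m-1} s_j(x)\, y^j$. Specialising to $m=1,2$ gives exactly \eqref{eq:maxdr}, completing the proof.
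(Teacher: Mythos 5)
Your proposal is correct and follows essentially the same route as the paper: both reduce powers of $y$ by substituting the defining relation $y=\phi(x)$ (for $m=1$) or $y^2=\phi(x)$ (for $m=2$) into a monomial expansion of $p$, the only cosmetic difference being that the paper first passes through the minimal-degree monomial bases of Proposition~\ref{prop:orthdimmd} while you apply the division algorithm in $\mathbb{R}[x][y]$ by the $y$-monic polynomial $y^m-\phi(x)$ directly. No gaps.
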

\begin{proof}
Any  $p(x, y) \in  \Pi(\g_{m,d})$ can be expressed as a linear combination of the elements of the monomial bases in (\ref{eq:Bgc1})--(\ref{eq:Bgc3}). The result follows immediately upon setting $y = \phi(x)$ (if $m = 1$) or $y^2 = \phi(x)$ (if $m = 2$) in the monomial expansion of $p(x,y)$. 
\end{proof}
In view of this result, we introduce the ``$P$-polynomials'' or ``$P$-basis''
\begin{align}
P^{(m,d)}_{n,k}(x,y) = \begin{cases}
 p_n(w;x) & \text{for } m = 1, k = 0, n \geq 0 \\
 p_n(w;x)/\sqrt{2} & \text{for } m = 2, k = 0, n \geq 0 \\
 yp_{n-1}(\phi w;x)/\sqrt{2} & \text{for } m = 2, k = 1, n \geq 1
\end{cases},   \label{eq:Pmddef}
\end{align}
which clearly form a basis for $\Pi(\g_{m,d})$ and, moreover:
\begin{prop}\label{prop:Pmorthog}
The $P$-polynomials are orthonormal with respect to $\langle\cdot, \cdot \rangle_{\g_{m,d},w}$:
\begin{equation*}
\langle P^{(m,d)}_{n,k}, P^{(m,d)}_{\ell,i} \rangle_{\g_{m,d},w} = \delta_{n,\ell}\delta_{k,i}, \qquad m = 1, 2.
\end{equation*}
\end{prop}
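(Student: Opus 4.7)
The plan is to verify the identity by evaluating the inner product in each of the possible $(m,k,i)$ cases using the explicit forms given in \eqref{eq:mipd} and \eqref{eq:Pmddef}, then appealing to the orthonormality of the underlying univariate OPs $\{p_n(w)\}$ and $\{p_n(\phi w)\}$.

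First I would dispatch $m=1$: here only $k=i=0$ occurs, so $P^{(1,d)}_{n,0}(x,\phi(x)) = p_n(w;x)$ and $\langle P^{(1,d)}_{n,0},P^{(1,d)}_{\ell,0}\rangle_{\g_{1,d},w}$ reduces directly, via \eqref{eq:mipd}, to $\int p_n(w;x)p_\ell(w;x)\,w(x)\,\mathrm{d}x = \delta_{n,\ell}$.

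For $m=2$ I would split into three subcases. When $k=i=0$, both evaluations $y=\pm\sqrt{\phi(x)}$ give the same integrand $p_n(w)p_\ell(w)/2$, and the two symmetric contributions add to $\int p_n(w)p_\ell(w)\,w\,\mathrm{d}x=\delta_{n,\ell}$; the factor $1/\sqrt{2}$ in \eqref{eq:Pmddef} is precisely what cancels the doubling. When $k=i=1$, the $y$ factor becomes $\pm\sqrt{\phi(x)}$ in each branch, the product $y\cdot y$ yields $\phi(x)$ in both branches (this is where the hypothesis $\phi>0$ on $\mathrm{supp}(w)$ enters, so that $\sqrt{\phi}$ is real), and summing the two branches together with the $1/2$ normalization gives
\begin{equation*}
\int p_{n-1}(\phi w;x)\,p_{\ell-1}(\phi w;x)\,\phi(x)w(x)\,\mathrm{d}x = \delta_{n-1,\ell-1} = \delta_{n,\ell},
\end{equation*}
by orthonormality with respect to the semiclassical weight $\phi w$. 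The mixed case $k\neq i$ is the cleanest: the factor of $y$ appears in exactly one of $P^{(2,d)}_{n,k}$ and $P^{(2,d)}_{\ell,i}$, so the integrand at $y=+\sqrt{\phi}$ is the exact negative of the integrand at $y=-\sqrt{\phi}$, and the two branches cancel.

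There is no real obstacle here; the only thing to be careful about is bookkeeping the $1/\sqrt{2}$ normalizations, the sign flip $y\mapsto -y$ that produces the cancellation in the off-diagonal case, and the fact that the arc-length Jacobian has already been absorbed into $w$ via $\omega(x,y)=w(x)/\sqrt{1+(y'(x))^2}$ so that \eqref{eq:mipd} can be used directly.
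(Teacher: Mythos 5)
Your proposal is correct and follows essentially the same route as the paper's own proof: reduce each case of \eqref{eq:mipd} to a univariate inner product, use orthonormality of $\{p_n(w)\}$ for the $k=i=0$ cases and of $\{p_n(\phi w)\}$ (via $y^2=\phi$) for $k=i=1$, and observe the sign cancellation between the two branches $y=\pm\sqrt{\phi}$ in the mixed case. The only difference is that you spell out the $1/\sqrt{2}$ bookkeeping and the role of $\phi>0$ slightly more explicitly than the paper does, which is fine.
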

\begin{proof}
Since the $p_n(w)$ are orthonormal with respect to the univariate inner product $\langle \cdot, \cdot \rangle_{w}$ we have, for $m = 1$, 
\begin{equation*}
\langle P^{(1,d)}_{n,0}, P^{(1,d)}_{\ell,0} \rangle_{\g_{1,d},w} = \langle p_n(w), p_{\ell}(w) \rangle_{w} = \delta_{n,\ell}. 
\end{equation*}
For $m = 2$, three orthogonality relations need to be demonstrated. First, orthogonality between $P$-polynomials with $k = 0$ in (\ref{eq:Pmddef}):
\begin{equation*}
\langle P^{(2,d)}_{n,0}, P^{(2,d)}_{\ell,0} \rangle_{\g_{2,d},w} =  \langle p_n(w), p_{\ell}(w) \rangle_{w} = \delta_{n,\ell}.
\end{equation*}
Next, between polynomials with $k = 1$,
\begin{equation*}
\langle P^{(2,d)}_{n,1}, P^{(2,d)}_{\ell,1} \rangle_{\g_{2,d},w} = \langle \phi p_{n-1}(\phi w), p_{\ell-1}(\phi w) \rangle_{w} = \langle  p_{n-1}(\phi w), p_{\ell-1}(\phi w) \rangle_{\phi w} =  \delta_{n,\ell}. 
\end{equation*} 
The final orthogonality relation is due to the reflection symmetry of the curve for $m = 2$:
\begin{equation*}
\langle P^{(2,d)}_{n,1}, P^{(2,d)}_{\ell,0} \rangle_{\g_{2,d},w} = \int\left[yp_{n-1}(\phi w)p_{\ell}(w) -  yp_{n-1}(\phi w)p_{\ell}(w)  \right]w \d x = 0.
\end{equation*}
\end{proof}

\subsection{Connection coefficients}
We denote the degree-$n$, orthonormal OPs on $\g_{m,d}$ by $Y^{(m,d)}_{n,k}(x,y)$, where $k = 1, \ldots, \dim \CV_n$, with $\dim \CV_n$  defined in (\ref{eq:cvn}). Hence, the OPs satisfy
\begin{equation*}
\langle Y^{(m,d)}_{n,k}, Y^{(m,d)}_{\ell,i} \rangle_{\g_{m,d},w} = \delta_{n,\ell}\delta_{k,i}, \qquad m = 1, 2,
\end{equation*}
for $n, \ell \geq 0$,  $1 \leq k \leq \dim\CV_k$, $1 \leq i \leq \dim\CV_i$ and $Y^{(m,d)}_{n,k}$ has degree $n$ on $\g_{m,d}$.  

It follows from Proposition~\ref{prop:curvepolyform} that the OPs can be expressed as linear combinations of the $P$-polynomials. That is, there exist \emph{connection coefficients} $C^{(m,d)}_{j,\ell}$, where $\ell = \ell(n,k) \in \mathbb{N}_0$ will be defined in section~\ref{sect:rec4ms}, such that
\begin{align}
 Y_{n,k}^{(1,d)}(x,y) & = \sum_{j \geq 1}   C^{(1,d)}_{j,\ell}P_{j,0}^{(1,d)}, \label{eq:Y1exp} \\
 Y_{n,k}^{(2,d)}(x,y) & = \sum_{j \geq 1} C^{(2,d)}_{2j-1,\ell} P_{j,0}^{(2,d)} +  \sum_{j \geq 1} C^{(2,d)}_{2j,\ell} P_{j,1}^{(2,d)}, \quad \label{eq:Y2exp}
\end{align}
for $n \geq 1$, $k = 1, \ldots, \dim \CV_n$ and with $Y_{0,1}^{(m,d)} = C^{(m,d)}_{0,0}P_{0,0}^{(m,d)}$ for $m = 1, 2$. The upper and lower bounds of the summation indices in (\ref{eq:Y1exp})--(\ref{eq:Y2exp}) will be considered in section~\ref{sect:Cstruct}. 

\subsection{Minimal and maximal degrees of polynomials on $\g_{m,d}$}

If $p$ is a polynomial in $\mathbb{R}[x,y]$, then we define the degree of the restriction of $p$ to $\g_{m,d}$ as the lowest degree of $q$ among all $q$ satisfying $p = q$ modulo $\langle y^m - \phi(x) \rangle$. By construction, the monomial bases in (\ref{eq:Bgc1})--(\ref{eq:Bgc3}) are minimal degree representations of polynomials in $\Pi(\g_{m,d})$. Therefore, the degree of $p(x,y)$ on the curve $\g_{m,d}$ is the degree of its expansion in the monomial basis given in (\ref{eq:Bgc1})--(\ref{eq:Bgc3}). 

If $d  > m$ and  $p(x,y) \in \Pi(\g_{m,n})$, then a maximal degree representation of $p$ is obtained by  setting $y = \phi$ (if $m=1$) or $y^2 = \phi$ (if $m=2$), in which case $p$ takes the form given in (\ref{eq:maxdr}). For example, if $m = 1 < d$, then $y^n = \phi^n$ is a polynomial of degree $n$ in $\Pi(\g_{m,n})$ whose maximal degree on $\g_{1,d}$ is $nd$. Observe from (\ref{eq:Pmddef}) that the total degree of a $P$-polynomial $P^{(m,d)}_{n,k}$ is $n$, which is also its maximal degree since it is in the form (\ref{eq:maxdr}). Therefore, the maximal degree of $p(x,y) \in \Pi(\g_{m,n})$ is the total degree of its expansion in the $P$-basis. We denote the maximal degree of $p(x,y)$ on $\g_{m,d}$ by $\deg p$ and its minimal degree by $\deg_{\g_{m,d}}(p)$. For example, if $m = 1 < d$, then $\deg y^n = nd$ and $\deg_{\g_{1,d}}(y^n) = n$ and for $n > d$, $\deg P^{(m,d)}_{n,k} = n > \deg_{\g_{m,d}}(P^{(m,d)}_{n,k})$ while, by definition, $\deg_{\g_{m,d}}(Y^{(m,d)}_{n,k}) = n$.

%

%
%
%

\section{Curves with explicit OP bases}\label{sec:exOPs}
\setcounter{equation}{0}

Before recursively constructing OPs on the general curves (\ref{eq:mcurvedef}), we consider cases for which the OP basis can be given explicitly in terms of the $P$-polynomials.

\begin{prop} \label{prop:exOPs}
The following are orthonormal OP bases on curves $\g_{m,d}$ defined in (\ref{eq:mcurvedef}):
if $m = 1$ and $\deg \phi = 1$, then
\begin{align}
Y_{n,1}^{(1,1)}(x,y) = P_{n,0}^{(1,1)}(x), \qquad n \geq 0;\label{eq:m1d1Ops}
\end{align}
if $m = 1$ and $\deg \phi = 2$, then
\begin{align}
Y_{0,1}^{(1,2)}(x,y)  = P_{0,0}^{(1,2)}(x), \qquad
\left(  
\begin{array}{c}
Y^{(1,2)}_{n,1}(x,y) \\
Y^{(1,2)}_{n,2}(x,y)
\end{array}
\right)
 = 
 \left(
 \begin{array}{l}
 P_{2n-1,0}^{(1,2)}(x) \\
 P_{2n,0}^{(1,2)}(x)
 \end{array}
 \right).
\end{align}
If $m = 2$ and $\deg \phi = 1$ or $\deg \phi = 2$, then
\begin{align}
Y_{0,1}^{(2,d)}(x,y)  = P_{0,0}^{(2,d)}(x), \qquad
\left(  
\begin{array}{c}
Y^{(2,d)}_{n,1}(x,y) \\
Y^{(2,d)}_{n,2}(x,y)
\end{array}
\right)
 = 
 \left(
 \begin{array}{l}
 P_{n,0}^{(2,d)}(x) \\
 P_{n,1}^{(2,d)}(x,y)
 \end{array}
 \right), \qquad d = 1, 2. \label{eq:quadexOPs}
 \end{align}
If $m = 2$ and  $\deg \phi = 3$, then $Y_{0,1}^{(2,3)} = P_{0,0}^{(2,3)}$, $Y_{1,1}^{(2,3)} = P_{1,0}^{(2,3)}$, $Y_{1,2}^{(2,3)} = P_{1,1}^{(2,3)}$ and
\begin{align}
\left(
\begin{array}{c}
Y_{2n,1}^{(2,3)} \\
Y_{2n,2}^{(2,3)} \\
Y_{2n,3}^{(2.3)}
\end{array}
\right) = 
\left(
\begin{array}{l}
P_{3n-1,0}^{(2,3)} \\
P_{3n-1,1}^{(2,3)} \\
P_{3n,0}^{(2,3)}
\end{array}
\right), \qquad
\left(
\begin{array}{c}
Y_{2n+1,1}^{(2,3)} \\
Y_{2n+1,2}^{(2,3)} \\
Y_{2n+1,3}^{(2,3)}
\end{array}
\right) = 
\left(
\begin{array}{l}
P_{3n,1}^{(2,3)} \\
P_{3n+1,0}^{(2,3)} \\
P_{3n+1,1}^{(2,3)}
\end{array}
\right), \quad n \geq 1.  \label{eq:cubicexOPs}
\end{align}
If $m = 2$, $\deg \phi = 4$, $\phi$ is even, $\mathrm{supp}(w) = (-a, a)$, $a > 0$ and the weight $w$ is even, so that $P^{(2,4)}_{n,0}$ has the same parity as $n$, then $Y_{n,i}^{(2,4)} = P_{n,i-1}^{(2,4)}$ for $(n,i) = (0,1)$, $(1,1)$, $(1,2)$, $(2,1)$, $(2,2)$, $Y_{2,3}^{(2,4)} = P_{4,0}^{(2,4)}$ and for $n \geq 1$
\begin{align}
\left(
\begin{array}{c}
Y_{2n+1,1}^{(2,4)} \\
Y_{2n+1,2}^{(2,4)} \\
Y_{2n+1,3}^{(2,4)} \\
Y_{2n+1,4}^{(2,4)}
\end{array}
\right) = 
\left(
\begin{array}{l}
P_{4n-1,0}^{(2,4)} \\
P_{4n-1,1}^{(2,4)} \\
P_{4n+1,0}^{(2,4)} \\
P_{4n+1,1}^{(2,4)}
\end{array}
\right), \qquad 
\left(
\begin{array}{c}
Y_{2n+2,1}^{(2,4)} \\
Y_{2n+2,2}^{(2,4)} \\
Y_{2n+2,3}^{(2,4)} \\
Y_{2n+2,4}^{(2,4)}
\end{array}
\right) = 
\left(
\begin{array}{l}
P_{4n,1}^{(2,4)} \\
P_{4n+2,0}^{(2,4)} \\
P_{4n+2,1}^{(2,4)} \\
P_{4n+4,0}^{(2,4)}
\end{array}
\right).  \label{eq:evenquarOPs}
\end{align}
\end{prop}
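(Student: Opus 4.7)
The plan is to verify each listed identity by showing that the proposed $Y^{(m,d)}_{n,k}$ are distinct $P$-polynomials whose minimal curve-degrees $\deg_{\g_{m,d}}$ equal $n$, and whose count at each $n$ matches $\dim \CV_n(\g_{m,d}, w)$ from Proposition~\ref{prop:orthdimmd}. Since by Proposition~\ref{prop:Pmorthog} the $P$-polynomials are orthonormal with respect to $\langle \cdot, \cdot \rangle_{\g_{m,d}, w}$, any such selection is automatically orthonormal and orthogonal to OPs of strictly lower degree, and by the dimension count spans $\CV_n(\g_{m,d}, w)$. The substantive step is the minimal-degree calculation, which I would carry out in $\Pi(\g_{m,d}) = \RR[x,y]/\langle y^m - \phi \rangle$ using the defining relation of the curve as a rewrite rule.

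For $m = 1$, iterating $x^d = c_d^{-1}(y - c_{d-1}x^{d-1} - \cdots - c_0)$ rewrites $x^j$ as a bivariate polynomial of smaller total degree. For $d = 1$ this is a linear change of variables and $\deg_{\g_{1,1}}(p_n(w;x)) = n$; for $d = 2$ a dimension count (the map from bivariate polynomials of degree $\leq n$ on $\g_{1,2}$ to univariate polynomials in $x$ of degree $\leq 2n$ is a bijection) yields $\deg_{\g_{1,2}}(p_j(w;x)) = \lceil j/2 \rceil$, so that $p_{2n-1}(w;x)$ and $p_{2n}(w;x)$ have minimal degree $n$ and together exhaust $\CV_n(\g_{1,2},w)$. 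For $m = 2$ with $d = 1$ or $d = 2$, neither the rewrite $x^d = c_d^{-1}(y^2 - c_{d-1}x^{d-1} - \cdots)$ nor $y^3 = y\phi(x)$ lowers the bivariate degree of a monomial, so each $P^{(2,d)}_{n,k}$ retains its natural degree $n$, giving (\ref{eq:quadexOPs}). For $m = 2$ with $d = 3$, both rewrites genuinely lower degrees, and direct computation yields $\deg_{\g_{2,3}}(x^j) = 2\lfloor j/3 \rfloor + (j \bmod 3)$ and $\deg_{\g_{2,3}}(yx^j) = 1 + 2\lfloor j/3 \rfloor + (j \bmod 3)$ (the latter exploiting $y^3 = y\phi$); matching each $P^{(2,3)}_{j,k}$ to its minimal-degree slot produces the interleaved pattern (\ref{eq:cubicexOPs}).

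The main obstacle is the even quartic case (\ref{eq:evenquarOPs}). Here $\phi = c_4 x^4 + c_2 x^2 + c_0$ is even and $w$ is even on $(-a, a)$, so each $P^{(2,4)}_{n, 0}$ and $P^{(2,4)}_{n, 1}$ carries a definite $x$-parity. The crucial feature is that the rewrite $x^4 = c_4^{-1}(y^2 - c_2 x^2 - c_0)$ is free of odd cross terms and therefore preserves $x$-parity during minimal-degree reduction; as a consequence, the formula $\deg_{\g_{2,4}}(x^j) = 2\lfloor j/4 \rfloor + (j \bmod 4)$ continues to hold, in contrast to the generic quartic where the $x^3$ term in $\phi$ raises this by one at $j = 4$. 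I would first establish this formula together with its companion $\deg_{\g_{2,4}}(yx^j) = 1 + 2\lfloor j/4 \rfloor + (j \bmod 4)$, and then solve $\deg_{\g_{2,4}}(P^{(2,4)}_{j, k}) = n$ for each $n$: this equation admits exactly four solutions $(j, k)$ for every $n \geq 3$ (two with each parity), which are precisely the ones displayed in (\ref{eq:evenquarOPs}), while for $n = 1, 2$ the smaller counts are verified by hand. The most intricate step is the bookkeeping showing that the collection of $(j, k)$ appearing across all $(n, i)$ is a bijection onto the $P$-basis, which I would confirm by induction on $n$, checking that each $P^{(2,4)}_{j, k}$ is used in exactly one slot. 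Once this is in place, the argument of the first paragraph closes the proof.
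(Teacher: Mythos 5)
Your proposal is correct and follows essentially the same route as the paper: reduce everything to Propositions~\ref{prop:Pmorthog} and~\ref{prop:orthdimmd} plus a verification that each listed $P$-polynomial has minimal degree $n$ on $\g_{m,d}$, computed by rewriting the leading monomial via $x^d \equiv c_d^{-1}(y^m - \cdots)$. The only difference is one of thoroughness: the paper carries out the degree computation only for the even quartic case (\ref{eq:evenquarOPs}) and declares the rest similar, whereas you supply explicit closed-form degree formulas for every case, correctly isolating the role of the evenness of $\phi$ (the absence of the $x^3$ term) in the quartic reduction.
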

\begin{proof}
The sets of $Y^{(m,d)}_{n,k}$ given above are orthonormal (see Proposition~\ref{prop:Pmorthog}) and the dimensions of $\CV_n$ are as in Proposition~\ref{prop:orthdimmd}.  What remains to be shown is that the $Y^{(m,d)}_{n,k}$ have degree $n$ on $\g_{m,d}$.  We only prove the latter case (\ref{eq:evenquarOPs}) since the proofs for the remaining cases are similar. 

Since $\phi$ is assumed to be even and of degree 4, $y^2 = c_4x^4 + c_2x^2 + c_0$, with $c_4 \neq 0$. To show that $Y_{2n+1,1}^{(2,4)} = P^{(2,4)}_{4n-1,0}$ has degree $2n+1$ on $\g_{2,4}$, it suffices to consider the highest degree monomial of $P^{(2,4)}_{4n-1,0}$, viz.~$x^{4n-1}$ and note that 
\begin{equation*}
x^{4n-1} = x^3\left( x^4 \right)^{n-1} = x^3\left(\frac{y^2 - c_2x^2 -c_0}{c_4}   \right)^{n-1},
\end{equation*}
which is an expression of degree $2n + 1$ whose degree cannot be further reduced on the curve $\g_{2,4}$. We conclude that $\deg_{\g_{2,4}}\left(P^{(2,4)}_{4n-1,0}\right) = 2n + 1$. Similarly, considering $Y_{2n+1,2}^{(2,4)} = P^{(2,4)}_{4n-1,1}$, the highest degree monomial is $yx^{4n-2} = yx^2\left( x^4 \right)^{n-1} = yx^2[(y^2 - c_2x^2 -c_0)/c_4]^{n-1}$ and it follows that $\deg_{\g_{2,4}}\left(P^{(2,4)}_{4n-1,1}\right) = 2n + 1$. The remaining cases  can be similarly confirmed. 

\end{proof}

The OP bases on quadratic curves (\ref{eq:quadexOPs}) and cubic curves (\ref{eq:cubicexOPs}) are the cases studied in, respectively, \cite{OX2} and \cite{FOX}.

\section{OPs on $y = \phi$, $\deg \phi \geq 3$ and $y^2 = \phi$, $\deg \phi \geq 4$}\label{sect:lanczos}
\setcounter{equation}{0}

We now consider the construction of OPs on $y = \phi$ and $y^2 = \phi$ in cases for which we are unable to give an explicit OP basis, which are the curves $y = \phi$, $\deg \phi \geq 3$ and $y^2 = \phi$, $\deg \phi \geq 4$.

By the same reasoning used in the proof of Proposition~\ref{prop:exOPs}, we deduce that
\begin{equation}
Y_{n,k}^{(1,d)} = P_{n,k-1}^{(1,d)}, \qquad (n, k) = (0, 1), (1, 1),  \label{eq:YPinm1}
\end{equation}
for $d \geq 3$ and
\begin{equation}
Y_{n,k}^{(2,d)} = P_{n,k-1}^{(2,d)}, \qquad (n,k) = (0,1), (1,1), (1,2), (2,1), (2,2), \label{eq:YPinm2}
\end{equation}
for $d \geq 4$. To generate higher degree OPs, we shall apply the Gram--Schmidt process and show that it is equivalent to the Lanczos algorithm applied to two symmetric, commuting matrices. 

We introduce the following infinite quasimatrices for the $P$-bases,
\begin{align}
\mathbf{P}^{(1,d)}=\mathbf{P}^{(1,d)}(x) &= \left(
\begin{array}{c c c c}
P^{(1,d)}_{0,0}(x) & P^{(1,d)}_{1,0}(x) & P^{(1,d)}_{2,0}(x) & \cdots
\end{array}  
\right), 
\label{eq:P1dqms} \\
\mathbf{P}^{(2,d)}=\mathbf{P}^{(2,d)}(x,y) &= \left(
\begin{array}{c | c | c | c}
\PP^{(2,d)}_{0}(x) & \PP^{(1,d)}_{1}(x,y) & \PP^{(1,d)}_{2}(x,y) & \cdots
\end{array}  
\right), 
\label{eq:P2dqms}
\end{align}
where the $\PP^{(2,d)}_n$ denote 
the block-quasimatrices 
\begin{equation}
\PP^{(2,d)}_{0}(x) =  \left( 
\begin{array}{c}
P_{0,0}^{(2,d)}(x)
\end{array}
\right), \quad
\PP^{(2,d)}_{n}(x,y) =  \left( 
\begin{array}{c c}
P_{n,0}^{(2,d)}(x) & P_{n,1}^{(2,d)}(x,y)
\end{array}
\right), \quad n \geq 1. \label{eq:P2blocks}
\end{equation}
Similarly, for the OPs,
\begin{equation}
\mathbf{Y}^{(m,d)} = \mathbf{Y}^{(m,d)}(x,y) = \left(
\begin{array}{c | c | c | c}
\YY^{(m,d)}_{0} & \YY^{(m,d)}_{1}& \YY^{(m,d)}_{2} & \cdots
\end{array}  
\right), 
\label{eq:Ymdqm}
\end{equation}
with
\begin{equation}
\YY^{(m,d)}_{n}(x,y) =  \left( 
\begin{array}{c c c}
Y_{n,1}^{(m,d)}(x,y) & \cdots & Y_{n,\dim\CV_n}^{(m,d)}(x,y)
\end{array}
\right), \qquad n \geq 0,   \label{eq:Ymdqmbs}
\end{equation}
where, since $m < d = \deg \phi$ for the cases we consider in this section, 
\begin{equation*}
\dim\CV_n = \min\{n+1, d\},
\end{equation*}
see Proposition~\ref{prop:orthdimmd}.

We can express (\ref{eq:Y1exp})--(\ref{eq:Y2exp}) as
\begin{equation}
\mathbf{Y}^{(m,d)} = \mathbf{P}^{(m,d)}C^{(m,d)},  \label{eq:YPmdconnect}
\end{equation}
where $C^{(m,d)}$ 
is referred to as the connection matrix and its entries $C^{(m,d)}_{i,j}$, $i,j \geq 0$ are the connection coefficients.  For all the values of $(m,d)$ considered in Proposition~\ref{prop:exOPs}, $C^{(m,d)} = I$, except for the final case (\ref{eq:evenquarOPs}), where there exists a permutation matrix $\mathcal{P}$ such that $C^{(2,4)}\mathcal{P} = I$. Figures~\ref{fig:C_m_1} and~\ref{fig:Cexamples_curve} depict examples of connection matrices for $m = 1, \deg \phi \geq 3$ and $m = 2, \deg \phi \geq 4$, the construction of which we shall describe in the rest of this section.

\begin{figure}[h!]
\mbox{ \includegraphics[width=0.43\textwidth]{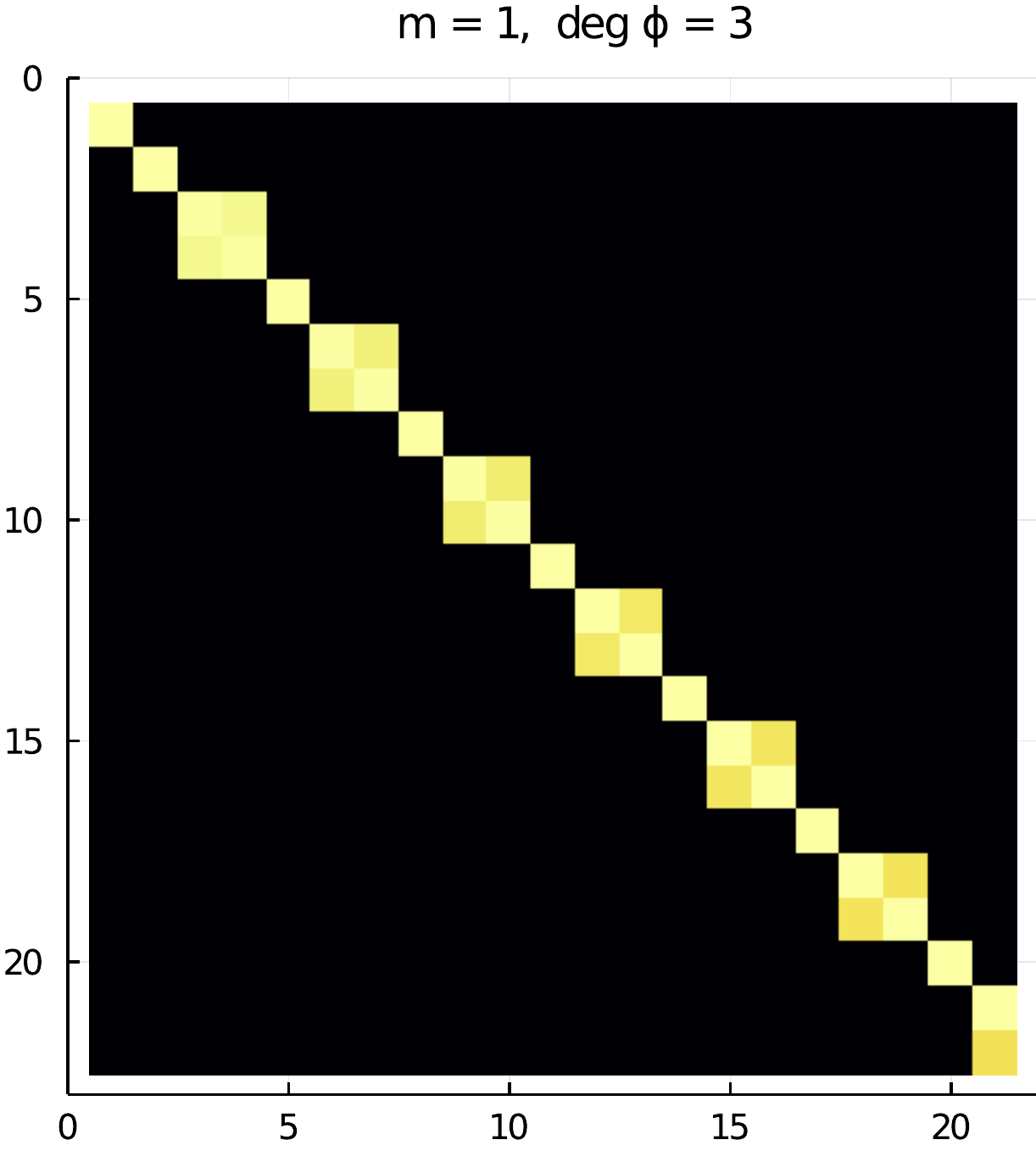} \includegraphics[width=0.49\textwidth]{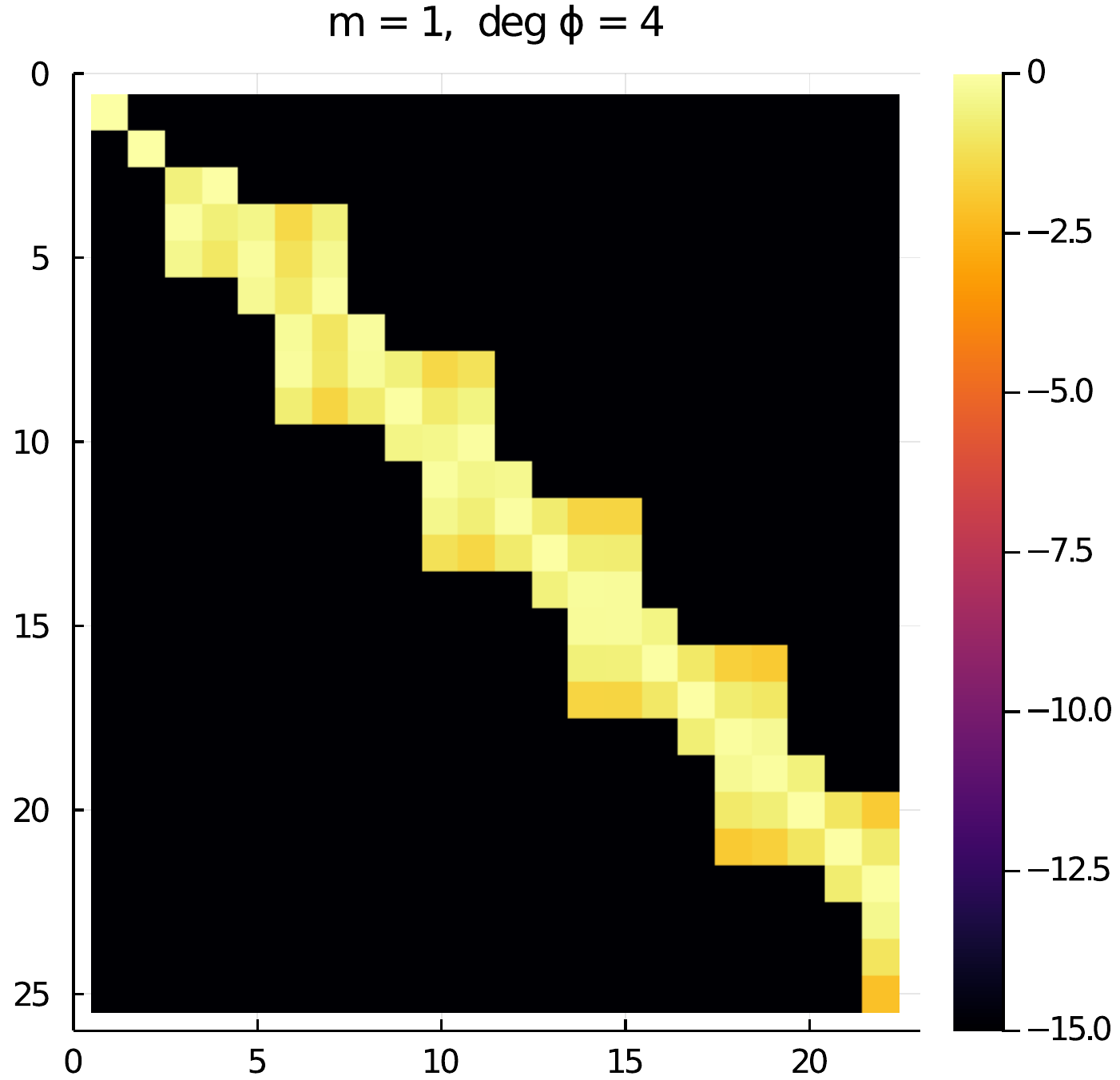} } \\
\mbox{ \includegraphics[width=0.42\textwidth]{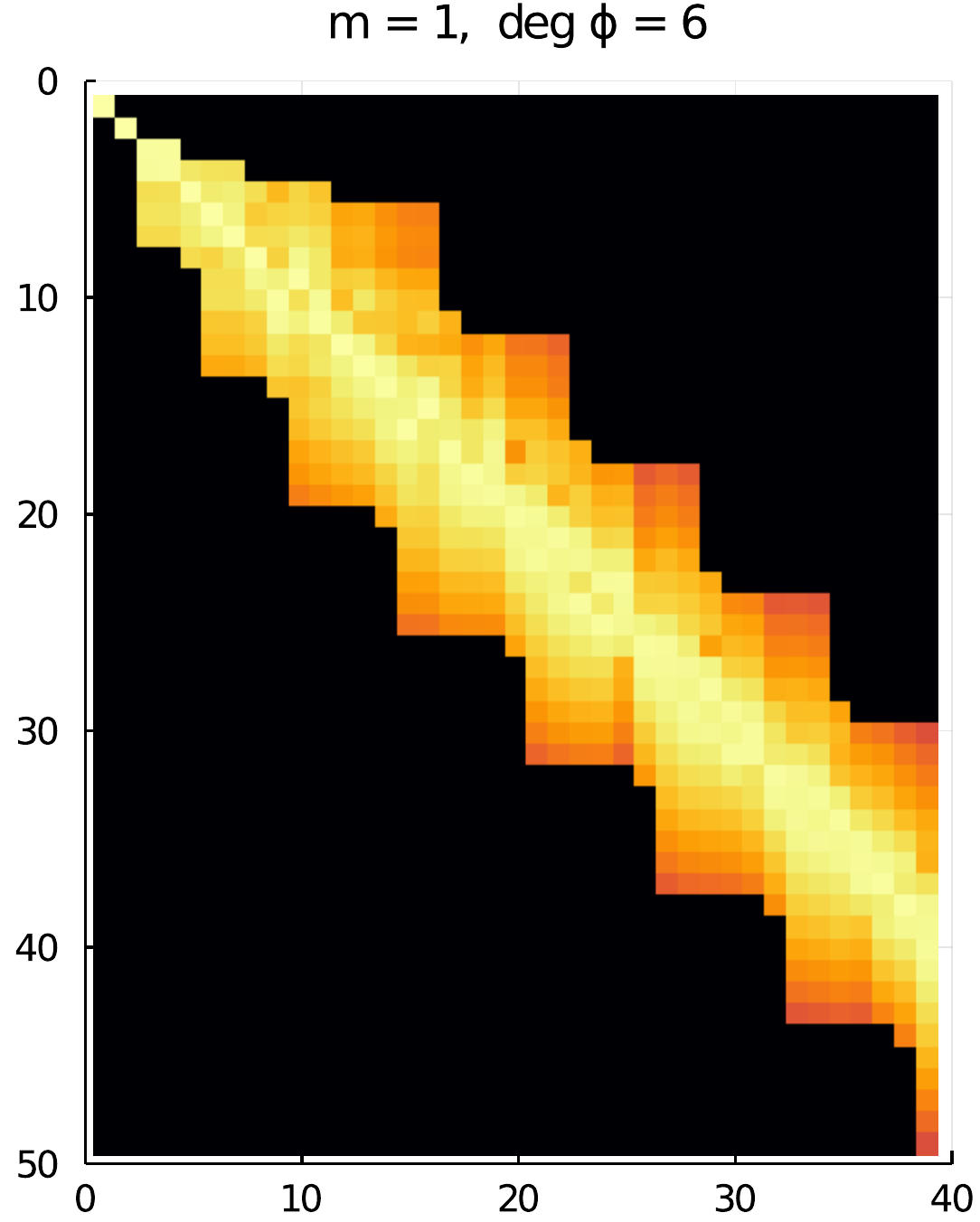} \includegraphics[width=0.5\textwidth]{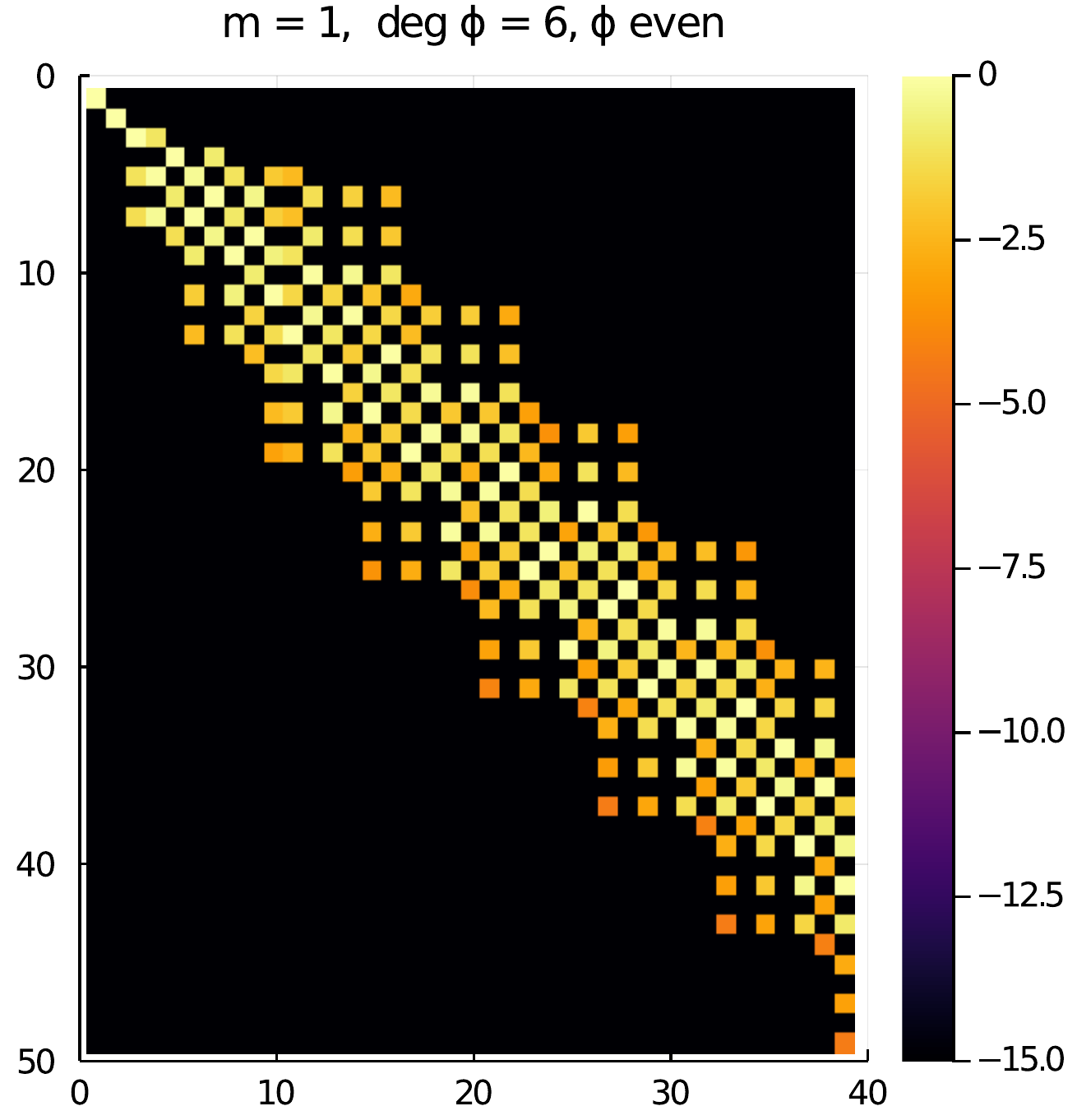} } 
  \caption{Plots of $\log_{10}$ of the entries of the connection matrices $C^{(1,d)}$ for OPs on $y = \phi(x)$. The curves are defined by $\phi(x) = x^3 - x^2 + x + 1$ (top-left), $\phi(x)  = x^4 + x^3 - x^2 + x + 1$ (top-right), $\phi(x) = (x^6 + x^5 - x^4 + x^3 - x^2 + x + 1)/10$ (bottom-left) and $\phi(x) = x^6 - x^4 - x^2  + 1$ (bottom right). The uniform weight $w(x) = 1$ on $[-1, 1]$ was used in all cases. }~\label{fig:C_m_1}
\end{figure}

\begin{figure}[h!]
\mbox{ \includegraphics[width=0.445\textwidth]{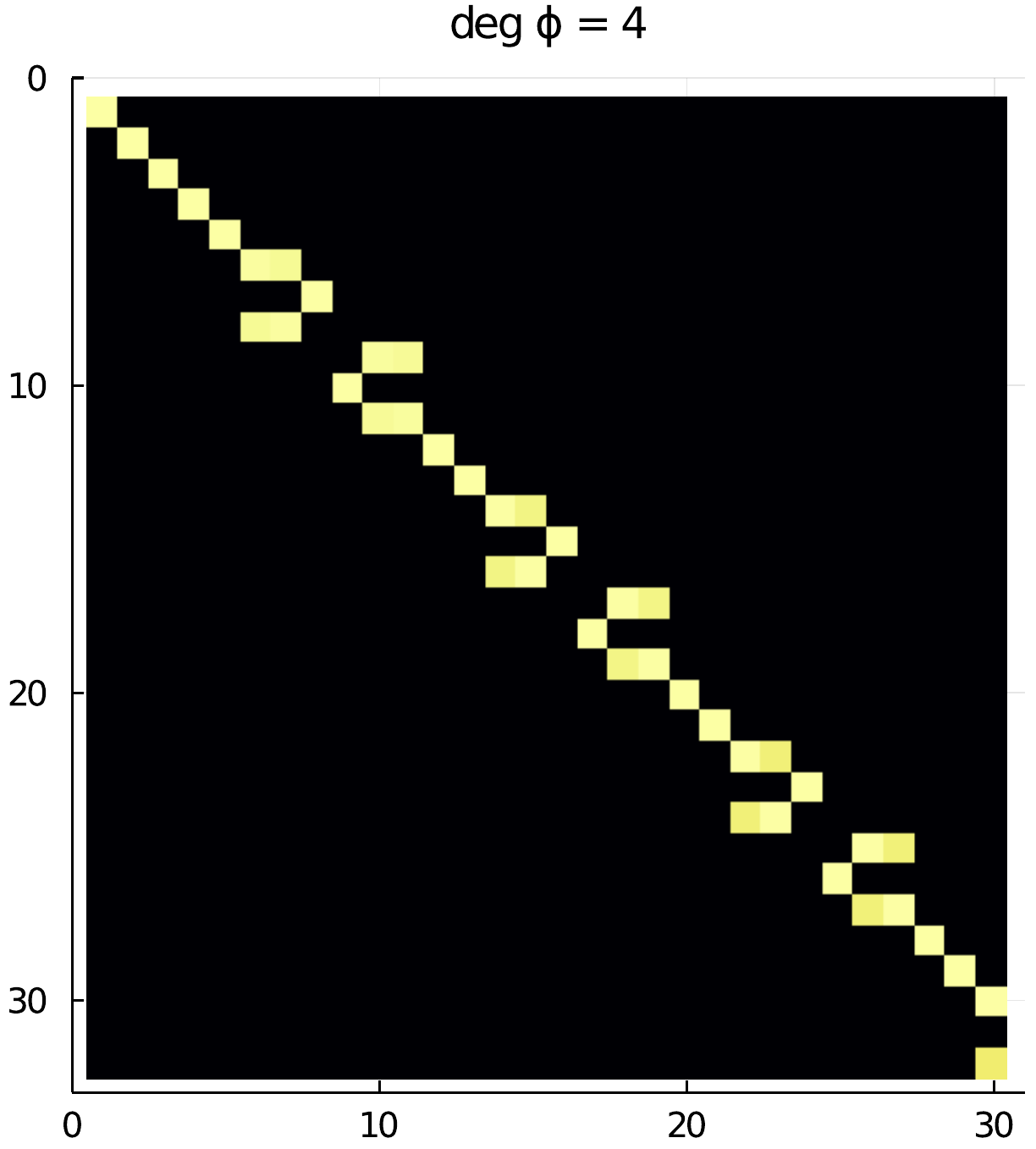} \includegraphics[width=0.465\textwidth]{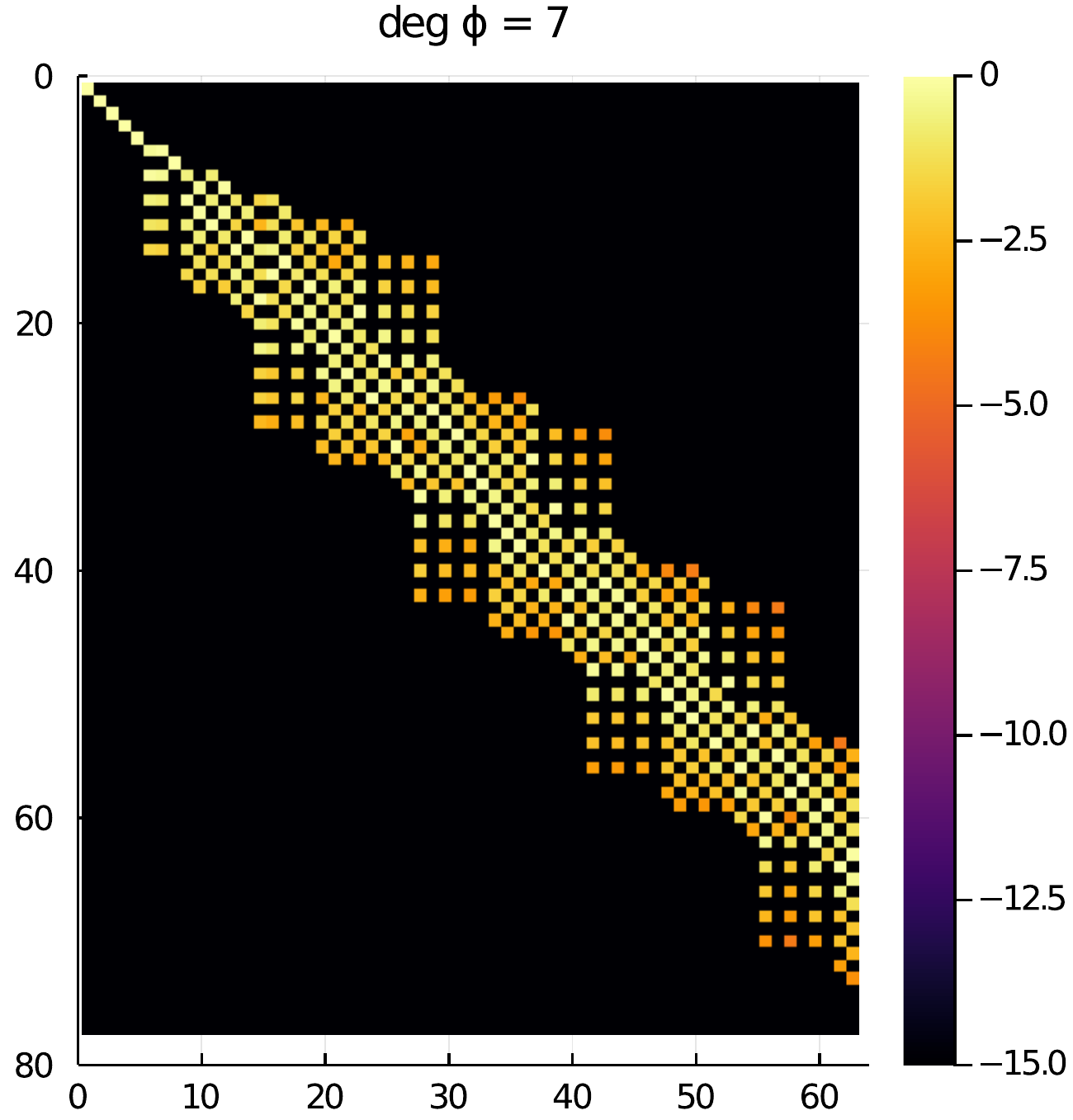} } \\
\mbox{ \includegraphics[width=0.415\textwidth]{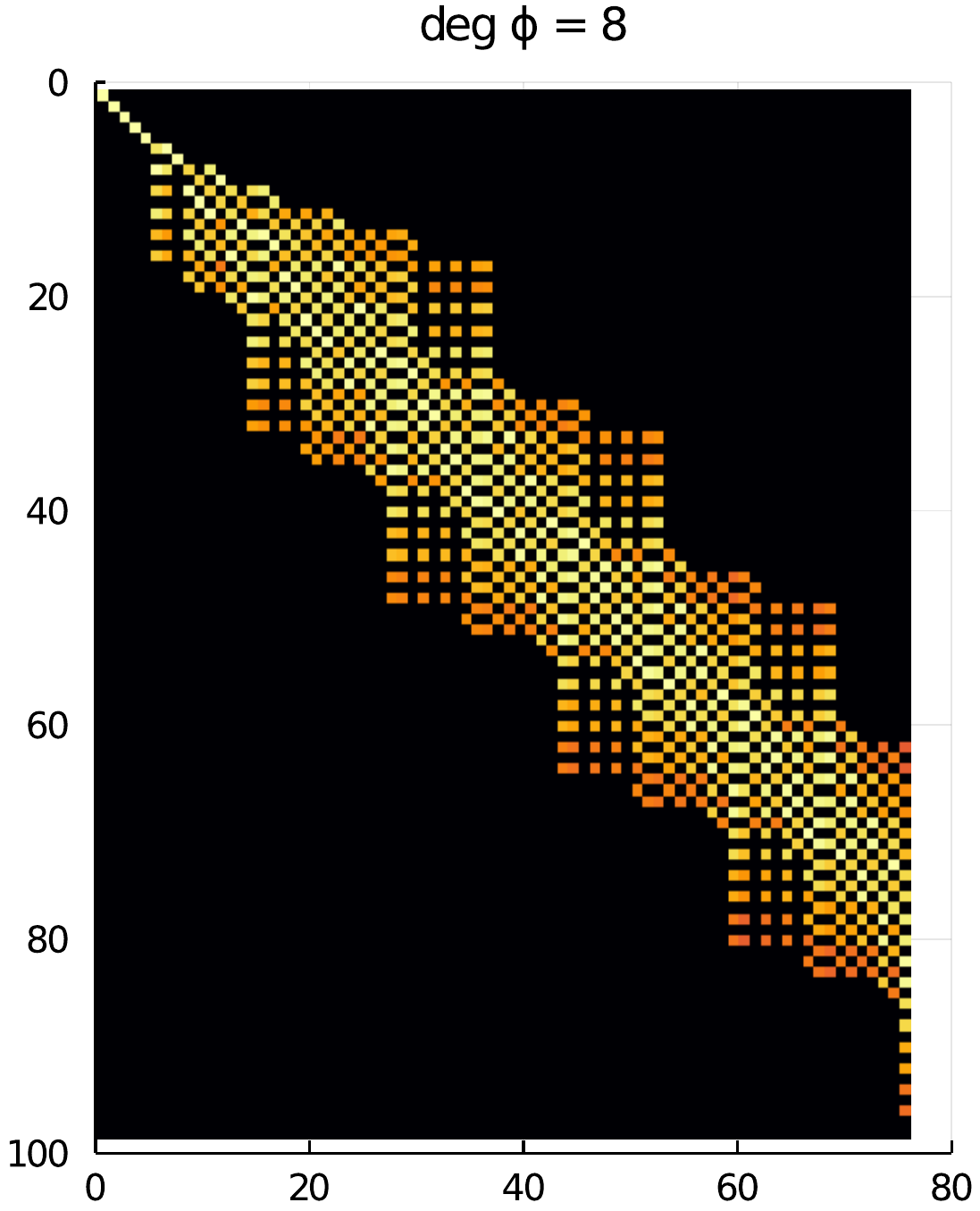} \includegraphics[width=0.495\textwidth]{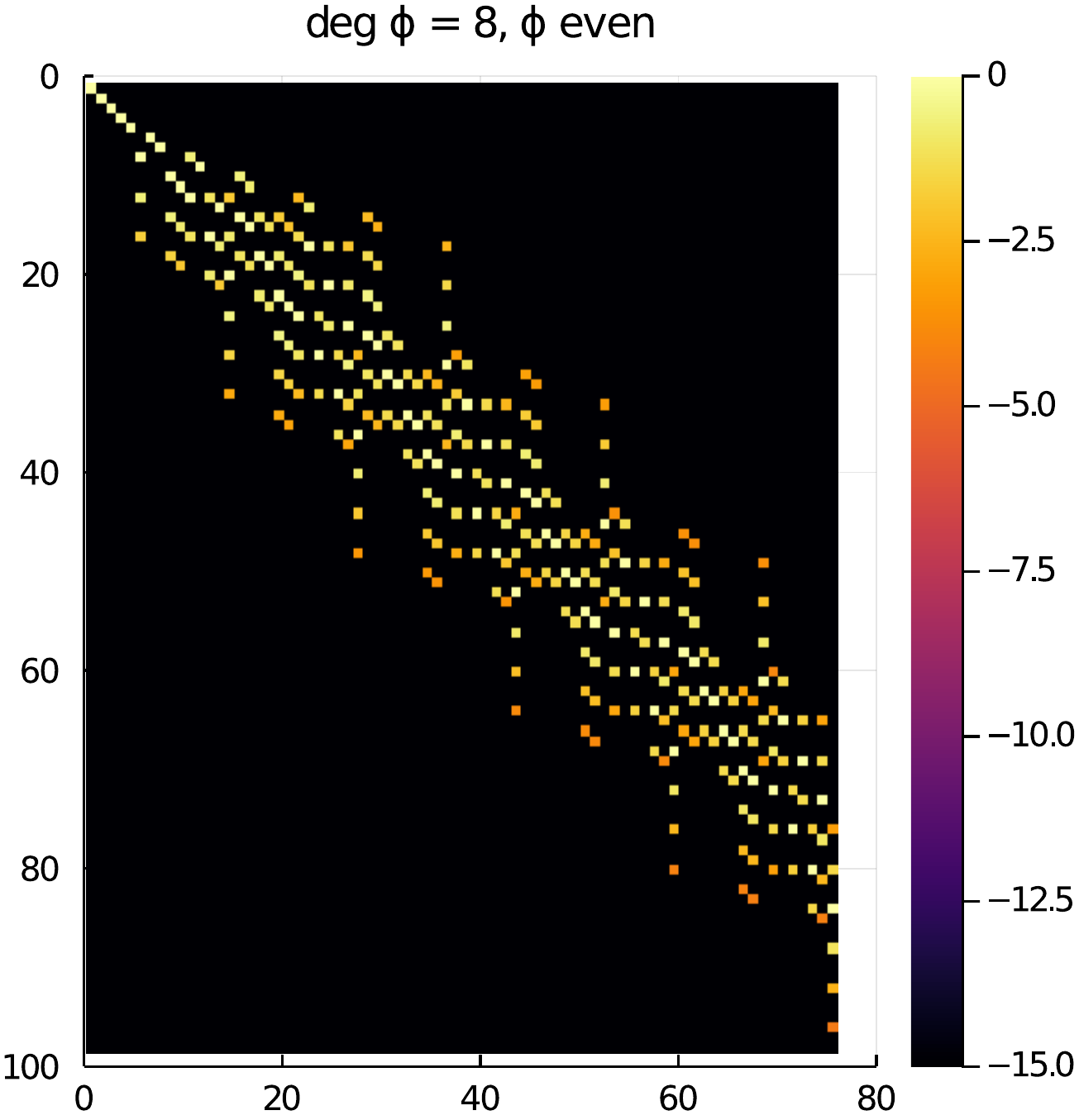} } 
  \caption{Plots of $\log_{10}$ of the entries of the connection matrices $C^{(2,d)}$ for OPs on $y^2 = \phi(x)$. 
   The curves in these examples are $\phi = (2+x)(1-x^2)(5/4-x)$ (top left), $\phi = \phi_7 := (1.15+x)(1.05+x)(1-x^2)(1.1-x)(1.2-x)(1.3-x)$ (top right), $\phi = (1.25+x)\phi_7$ (bottom left) and $\phi = (1-x^2)(1.05^2-x^2)(1.1^2-x^2)(1.15^2-x^2)$ (bottom right). The uniform weight $w = 1$ on $[-1, 1]$ was used in all cases.   }~\label{fig:Cexamples_curve}
\end{figure} 

\begin{prop}
The connection matrix $C^{(m,d)}$ for $m = 1, 2$ is orthogonal.
\end{prop}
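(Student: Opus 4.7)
The plan is to exploit the fact that both $\mathbf{P}^{(m,d)}$ and $\mathbf{Y}^{(m,d)}$ are orthonormal bases of $\Pi(\g_{m,d})$ with respect to the same inner product $\langle \cdot, \cdot \rangle_{\g_{m,d}, w}$. Orthonormality of the $P$-basis was established in Proposition~\ref{prop:Pmorthog}, and orthonormality of the $Y$-basis is built into its definition. In the language of (\ref{eq:qmipd}), both Gram matrices are the identity:
\begin{equation*}
\langle \mathbf{P}^{(m,d)}, \mathbf{P}^{(m,d)} \rangle_{\g_{m,d},w} = I = \langle \mathbf{Y}^{(m,d)}, \mathbf{Y}^{(m,d)} \rangle_{\g_{m,d},w}.
\end{equation*}

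First I would substitute the defining relation $\mathbf{Y}^{(m,d)} = \mathbf{P}^{(m,d)} C^{(m,d)}$ from (\ref{eq:YPmdconnect}) into the Gram matrix of $\mathbf{Y}^{(m,d)}$ and pull the (constant-entry) connection matrices out of the entry-wise integration, yielding
\begin{equation*}
I = \langle \mathbf{Y}^{(m,d)}, \mathbf{Y}^{(m,d)} \rangle_{\g_{m,d},w} = (C^{(m,d)})^{\top} \langle \mathbf{P}^{(m,d)}, \mathbf{P}^{(m,d)} \rangle_{\g_{m,d},w}\, C^{(m,d)} = (C^{(m,d)})^{\top} C^{(m,d)}.
\end{equation*}
This gives one of the two orthogonality identities.

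Next I would establish the reverse identity $C^{(m,d)} (C^{(m,d)})^{\top} = I$. Since both bases span the same space $\Pi(\g_{m,d})$, each $P$-polynomial can be expanded in the $Y$-basis, giving a quasimatrix relation $\mathbf{P}^{(m,d)} = \mathbf{Y}^{(m,d)} D$ for some infinite matrix $D$. Inserting this into (\ref{eq:YPmdconnect}) yields $\mathbf{Y}^{(m,d)} = \mathbf{Y}^{(m,d)} D C^{(m,d)}$, so by linear independence of the $Y$-basis we get $D C^{(m,d)} = I$, i.e., $D$ is a left inverse of $C^{(m,d)}$. Repeating the Gram-matrix computation with the roles of $\mathbf{P}^{(m,d)}$ and $\mathbf{Y}^{(m,d)}$ reversed then gives $I = D^{\top} D$, and combining with $D C^{(m,d)} = I$ shows $D = (C^{(m,d)})^{\top}$, whence $(C^{(m,d)})^{\top} C^{(m,d)} = I = C^{(m,d)} (C^{(m,d)})^{\top}$.

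The main subtlety, rather than an obstacle, is that $C^{(m,d)}$ is infinite, so a one-sided identity $(C^{(m,d)})^{\top} C^{(m,d)} = I$ alone does not a priori imply orthogonality — one must also verify invertibility from the other side, which is precisely why the second paragraph is needed. Everything else is a direct quasimatrix manipulation using the orthonormality of the two bases and the fact that they span the same polynomial space on $\g_{m,d}$.
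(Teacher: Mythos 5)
Your first identity, $(C^{(m,d)})^{\top} C^{(m,d)} = I$, is obtained exactly as in the paper: substitute $\mathbf{Y}^{(m,d)} = \mathbf{P}^{(m,d)}C^{(m,d)}$ into the Gram matrix of $\mathbf{Y}^{(m,d)}$ and use that both Gram matrices equal $I$. The paper stops there and declares $C^{(m,d)}$ orthogonal. Your second paragraph goes beyond the paper's argument, and the subtlety you flag is real: for an infinite matrix, $(C^{(m,d)})^{\top}C^{(m,d)} = I$ alone only says the columns are orthonormal, and the two-sided identity requires a separate argument. Your construction of the reverse change of basis $\mathbf{P}^{(m,d)} = \mathbf{Y}^{(m,d)}D$ is legitimate, since each $P$-polynomial lies in $\Pi(\g_{m,d})$ and is therefore a \emph{finite} linear combination of the $Y^{(m,d)}_{n,k}$ (so $D$ is column-finite, as is $C^{(m,d)}$, which justifies the associativity you implicitly use); the chain $C^{(m,d)} = (D^{\top}D)C^{(m,d)} = D^{\top}(DC^{(m,d)}) = D^{\top}$ then yields $D = (C^{(m,d)})^{\top}$ and hence $C^{(m,d)}(C^{(m,d)})^{\top} = DC^{(m,d)} = I$. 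In short: your proof is correct, agrees with the paper on the main computation, and is strictly more careful; what the extra work buys is the genuine two-sided orthogonality (equivalently, that $C^{(m,d)}$ is invertible with inverse its transpose), which the paper's one-line argument asserts but does not fully establish, even though only the one-sided identity is used in the subsequent similarity transformations (\ref{eq:jacmult}).
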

\begin{proof}
Since the $\mathbf{Y}^{(m,d)}$ are orthonormal OPs and the $\mathbf{P}^{(m,d)}$ are orthonormal for $m = 1, 2$, it follows that $\la \mathbf{Y}^{(m,d)}, \mathbf{Y}^{(m,d)} \ra_{\g_{m,d},w} = I$ and $\la \mathbf{P}^{(m,d)}, \mathbf{P}^{(m,d)} \ra_{\g_{m,d},w} = I$, where $\langle \cdot, \cdot \rangle_{\g_{m,d},w}$ for quasimatrices is defined as in (\ref{eq:qmipd})--(\ref{eq:qmipent}), mutatis mutandis. Thus, the connection matrix is orthogonal because
\begin{align*}
I &= \la \mathbf{Y}^{(m,d)}, \mathbf{Y}^{(m,d)} \ra_{\g_{m,d},w} = \left(C^{(m,d)}\right)^{\top} \la \mathbf{P}^{(m,d)}, \mathbf{P}^{(m,d)} \ra_{\g_{m,d},w}C^{(m,d)}\\
& = \left(C^{(m,d)}\right)^{\top} C^{(m,d)}, \qquad m = 1, 2.
\end{align*} 
\end{proof}

\subsection{Multiplication matrices for the  ${P}$-bases}\label{sect:multP}
To construct the connection matrices we shall need matrices $\mathcal{X}^{(m,d)}$ and $\mathcal{Y}^{(m,d)}$ representing multiplication of $\mathbf{P}^{(m,d)}$ by $x$ and $y$:
\begin{equation}
x\mathbf{P}^{(m,d)} = \mathbf{P}^{(m,d)}\mathcal{X}^{(m,d)}, \qquad y\mathbf{P}^{(m,d)} = \mathbf{P}^{(m,d)}\mathcal{Y}^{(m,d)}.  \label{eq:xyPmd}
\end{equation} 
\begin{prop}
The multiplication matrices of the  $\mathbf{P}^{(1,d)}$ basis are
\begin{equation}
\mathcal{X}^{(1,d)} = J(w), \qquad \mathcal{Y}^{(1,d)} = \phi(J(w)),  \label{eq:xym1ops}
\end{equation}
where $J(w)$ is the Jacobi matrix of the orthonormal OPs $\{ p_n(w) \}$, which is defined as in (\ref{eq:sc3term})--(\ref{eq:jacphiw}) but with $\phi w$ replaced by $w$.  Furthermore, $\mathcal{X}^{(1,d)}$ and $\mathcal{Y}^{(1,d)}$ commute, $\mathcal{X}^{(1,d)}$ is symmetric and tridiagonal and  $\mathcal{Y}^{(1,d)}$ is symmetric with bandwidths $(d, d)$.

For $m = 2$, $\mathcal{X}^{(2,d)}$ and $\mathcal{Y}^{(2,d)}$ commute, $\mathcal{X}^{(2,d)}$ is symmetric, block-tridiagonal with diagonal blocks:
\begin{align*}
\mathcal{X}^{(2,d)} = 
\left(
\begin{array}{c c c c c}
\mathcal{X}_{0,0}^{(2,d)} & \left(\mathcal{X}_{1,0}^{(2,d)}\right)^{\top} &  & &  \\
\mathcal{X}_{1,0}^{(2,d)} & \mathcal{X}_{1,1}^{(2,d)} & \left(\mathcal{X}_{2,1}^{(2,d)}\right)^{\top} & & \\
 & \mathcal{X}_{2,1}^{(2,d)} & \mathcal{X}_{2,2}^{(2,d)} & \left(\mathcal{X}_{3,2}^{(2,d)}\right)^{\top} & \\
 & & \ddots & \ddots & \ddots
\end{array}
\right),
\end{align*}
where 
 $\mathcal{X}_{0,0}^{(2,d)} = \left(\begin{array}{c} \a_0(w) \end{array}\right) \in \RR^{1\times 1} $, $\mathcal{X}_{1,0}^{(2,d)} = \left(\begin{array}{c c} \b_0(w) & 0 \end{array}\right)^{\top} \in \RR^{2\times 1}$, and for $n \geq 1$
\begin{align*}
\mathcal{X}_{n,n}^{(2,d)} = \left(
\begin{array}{c c}
\a_n(w) & 0 \\
0 & \a_{n-1}(\phi w)
\end{array}
\right), \qquad
\mathcal{X}_{n+1,n}^{(2,d)} = \left(
\begin{array}{c c}
\b_n(w) & 0 \\
0 & \b_{n-1}(\phi w)
\end{array}
\right). 
\end{align*}
The matrix $\mathcal{Y}^{(2,d)}$ is symmetric with block-bandwiths $(d-1, d-1)$:
\begin{align}
\mathcal{Y}^{(2,d)} = \left(
\begin{array}{ c  c  c  c  c  c  c  c c}
0                  & \left(\mathcal{Y}_{1,0}^{(2,d)}\right)^{\top} &        &       &        &     &  & &\\
\mathcal{Y}_{1,0}^{(2,d)}& \mathcal{Y}_{1,1}^{(2,d)} & \left(\mathcal{Y}_{2,1}^{(2,d)}\right)^{\top}&\cdots & \left(\mathcal{Y}_{d,1}^{(2,d)}\right)^{\top}&    & & & \\
       & \mathcal{Y}_{2,1}^{(2,d)} & \mathcal{Y}_{2,2}^{(2,d)} & \cdots &\cdots & \left(\mathcal{Y}_{d+1,2}^{(2,d)}\right)^{\top} & & & \\
       & \vdots             & \ddots  & \ddots &  \ddots & \ddots &\ddots & & \\
                   &\mathcal{Y}_{d,1}^{(2,d)} & \cdots & \cdots & \mathcal{Y}_{d,d}^{(2,d)} &\left(\mathcal{Y}_{d+1,d}^{(2,d)}\right)^{\top} & \cdots & \left(\mathcal{Y}_{2d-1,d}^{(2,d)}\right)^{\top} & \\
                   &        & \ddots & \ddots  & \ddots & \ddots                 & \ddots & \ddots & \ddots
\end{array}
\right),  \label{eq:Y2dcop}
\end{align}
where $\mathcal{Y}_{1,0}^{(2,d)} = \left(\begin{array}{c c} 0 & r_{0,0}  \end{array} \right)^{\top}$,
\begin{align*}
\mathcal{Y}^{(2,d)}_{n,n} = \left(
\begin{array}{c c}
0 & r_{n-1,n}  \\
r_{n-1,n} & 0
\end{array}
\right), \quad
\mathcal{Y}^{(2,d)}_{n+1,n} = \left(
\begin{array}{c c}
0 & r_{n-1,n+1}  \\
r_{n,n} & 0
\end{array}
\right), \quad n \geq 1,
\end{align*}
\begin{align*}
\mathcal{Y}^{(2,d)}_{n+k,n} = \left(
\begin{array}{c c}
0 & r_{n-1,n+k}  \\
0 & 0
\end{array}
\right), \qquad
2 \leq k \leq d-1, \qquad n \geq 1,
\end{align*}
and the $r_{i,j}$ are defined in (\ref{eq:rkndef}).
\end{prop}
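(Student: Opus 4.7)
The plan is to expand $x\mathbf{P}^{(m,d)}$ and $y\mathbf{P}^{(m,d)}$ in the $P$-basis column by column, using the three-term recurrences for $\{p_n(w)\}$ and $\{p_n(\phi w)\}$ together with the raising relation (\ref{eq:raiseop}). Symmetry of each multiplication matrix will be inherited from the orthonormality of the $P$-basis (Proposition~\ref{prop:Pmorthog}), and commutativity from the trivial identity $xy = yx$.

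For $m = 1$, the identity $\mathcal{X}^{(1,d)} = J(w)$ is immediate from $P^{(1,d)}_{n,0} = p_n(w)$ and (\ref{eq:qmjacop}). On the curve $y = \phi(x)$, $yP^{(1,d)}_{n,0}(x,y) = \phi(x)p_n(w;x)$; iterating $x\mathbf{P}(w) = \mathbf{P}(w)J(w)$ gives $\phi(x)\mathbf{P}(w) = \mathbf{P}(w)\phi(J(w))$, whence $\mathcal{Y}^{(1,d)} = \phi(J(w))$. Commutativity then follows because polynomials in a common matrix commute; symmetry from the symmetry of $J(w)$; and the $(d,d)$ bandwidth of $\phi(J(w))$ from $J(w)^k$ having bandwidths $(k,k)$ together with $\deg\phi = d$.

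For $m = 2$, multiplication by $x$ acts only on the polynomial-in-$x$ factor of each $P^{(2,d)}_{n,k}$, so it preserves each column of $\PP^{(2,d)}_n$ independently. Applying (\ref{eq:sc3term}) to the classical family $\{p_n(w)\}$ and to the semiclassical family $\{p_{n-1}(\phi w)\}$ separately, and accounting for the $1/\sqrt{2}$ normalisation in (\ref{eq:Pmddef}), yields the stated diagonal blocks $\mathcal{X}^{(2,d)}_{n,n}$ and subdiagonal blocks $\mathcal{X}^{(2,d)}_{n+1,n}$; the reduced forms at $n=0$ reflect the fact that $\PP^{(2,d)}_0$ is a single column. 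For $\mathcal{Y}^{(2,d)}$, substituting the raising relation into $yP^{(2,d)}_{n,0} = yp_n(w)/\sqrt{2}$ gives
\[
yP^{(2,d)}_{n,0} = \frac{1}{\sqrt{2}}\sum_{j=n-d}^n r_{j,n}\, yp_j(\phi w) = \sum_{j=n-d}^n r_{j,n}\, P^{(2,d)}_{j+1,1},
\]
which populates the $(2,1)$ entry of $\mathcal{Y}^{(2,d)}_{\ell,n}$ for $\ell = n-d+1,\ldots,n+1$. For the second column, $yP^{(2,d)}_{n,1} = \phi(x)p_{n-1}(\phi w)/\sqrt{2}$; reading (\ref{eq:rkndef}) as $r_{n-1,j} = \la p_j(w), \phi\, p_{n-1}(\phi w)\ra_w$ shows that $\phi(x)p_{n-1}(\phi w) = \sum_{j=n-1}^{n-1+d} r_{n-1,j}\, p_j(w)$, so
\[
yP^{(2,d)}_{n,1} = \sum_{j=n-1}^{n-1+d} r_{n-1,j}\, P^{(2,d)}_{j,0},
\]
which populates the $(1,2)$ entry of $\mathcal{Y}^{(2,d)}_{\ell,n}$ for $\ell = n-1,\ldots,n-1+d$. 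Combining these two expansions reproduces the displayed $\mathcal{Y}^{(2,d)}_{n,n}$, $\mathcal{Y}^{(2,d)}_{n+1,n}$, and the $\mathcal{Y}^{(2,d)}_{n+k,n}$ for $2\leq k\leq d-1$, and gives block bandwidths $(d-1,d-1)$ (the union of the two index ranges being $k \in [1-d, d-1]$).

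Symmetry of $\mathcal{X}^{(2,d)}$ and $\mathcal{Y}^{(2,d)}$ follows because each scalar entry has the form $\la P_i, xP_j\ra_{\g_{2,d},w}$ (resp.~with $y$), which is manifestly symmetric in $i,j$ by Proposition~\ref{prop:Pmorthog}. Commutativity follows by applying $xy = yx$ to (\ref{eq:xyPmd}): $\mathbf{P}^{(m,d)}\mathcal{X}^{(m,d)}\mathcal{Y}^{(m,d)} = xy\mathbf{P}^{(m,d)} = yx\mathbf{P}^{(m,d)} = \mathbf{P}^{(m,d)}\mathcal{Y}^{(m,d)}\mathcal{X}^{(m,d)}$, and linear independence of the $P$-basis yields $\mathcal{X}^{(m,d)}\mathcal{Y}^{(m,d)} = \mathcal{Y}^{(m,d)}\mathcal{X}^{(m,d)}$ for both $m=1,2$. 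I expect the main bookkeeping obstacle to be aligning the two expansions for $yP^{(2,d)}_{n,0}$ and $yP^{(2,d)}_{n,1}$ with the correct off-diagonal $(2,1)$ and $(1,2)$ positions of each $\mathcal{Y}^{(2,d)}_{n+k,n}$ across $k \in [1-d, d-1]$, and checking the boundary cases for small $n$.
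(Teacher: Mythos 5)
Your proposal is correct and follows essentially the same route as the paper: $\mathcal{X}^{(1,d)}=J(w)$ and $\mathcal{Y}^{(1,d)}=\phi(J(w))$ from iterating the three-term recurrence, the blocks of $\mathcal{X}^{(2,d)}$ from the recurrences of $\{p_n(w)\}$ and $\{p_n(\phi w)\}$, and the blocks of $\mathcal{Y}^{(2,d)}$ from the raising relation for $yP^{(2,d)}_{n,0}$ and from $y^2=\phi$ combined with the transposed (lowering) relation for $yP^{(2,d)}_{n,1}$, with commutativity from $xy=yx$. The only cosmetic differences are your reindexing of $yP^{(2,d)}_{n,1}$ in place of the paper's $yP^{(2,d)}_{n+1,1}$ and your direct observation that $J(w)$ and $\phi(J(w))$ commute for $m=1$.
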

\begin{proof}
Since $x^k\mathbf{P}(w) = \mathbf{P}(w)\left(J(w)\right)^k$, $k \geq 0$ and $\left(J(w)\right)^k$ is symmetric with bandwidths $(k, k)$, it follows that $y\mathbf{P}(w) = \phi(x)\mathbf{P}(w) =\mathbf{P}(w) \phi(J(w))$, where $\phi(J(w))$ is symmetric with bandwidths $(d, d)$ because $\phi$ is a polynomial of degree $d$. The result  (\ref{eq:xym1ops}) then follows since $\mathbf{P}^{(1,d)} = \mathbf{P}(w)$.

The commutativity of $\mathcal{X}^{(m,d)}$ and $\mathcal{Y}^{(m,d)}$ for $m = 1, 2$ follows immediately from their definitions in (\ref{eq:xyPmd}) and the commutativity of multiplication by $x$ and $y$, $xy\mathbf{P}^{(m,d)} = yx\mathbf{P}^{(m,d)}$.

The entries of $\mathcal{X}^{(2,d)}$ follow straightforwardly from (\ref{eq:Pmddef}) and the three-term recurrences of the OP families $\{ p_n(w) \}$ and $\{ p_n(\phi w) \}$.

To construct $\mathcal{Y}^{(2,d)}$, we first note from (\ref{eq:Pmddef}) and (\ref{eq:raiseop})--(\ref{eq:raisestruct}) that
\begin{align}
yP_{n,0}^{(2,d)} & = r_{n-d,n} P_{n-d+1,1}^{(2,d)} + \cdots + r_{n,n} P_{n+1,1}^{(2,d)}, \label{eq:ymultyp}  
\end{align}
for $n \geq 0$, where $r_{i,j} = 0$ if $i < 0$. Since $yP_{n+1,1}^{(2,d)} = y^2p_{n}(\phi w)/\sqrt{2} = \phi p_{n}(\phi w)/\sqrt{2}$, we can express  $yP_{n+1,1}^{(2,d)}$ in the $\mathbf{P}^{(2,d)}$ basis using the entries of the lowering matrix $L$ defined by
\begin{equation*}
\phi(x) \mathbf{P}(\phi w) = \mathbf{P}(w)L,
\end{equation*}
where (cf.~(\ref{eq:qmipd}))
\begin{align*}
L  = \langle \mathbf{P}(w),\phi \mathbf{P}(\phi w) \rangle_{w} = \int \mathbf{P}^{\top}(w) \mathbf{P}(\phi w) \phi w \d x = R^{\top},
\end{align*}
and therefore
\begin{align}
 yP_{n+1,1}^{(2,d)} & = r_{n,n}P_{n,0}^{(2,d)} + \cdots + r_{n,n+d} P_{n+d,0}^{(2,d)}. \label{eq:ymultyp1} 
\end{align}
The entries and structure of $\mathcal{Y}^{(2,d)}$ then follow from (\ref{eq:ymultyp}) and (\ref{eq:ymultyp1}).
\end{proof}

\subsection{The Gram--Schmidt process viewed as a block Lanczos algorithm} 

Since the $Y_{n,k}^{(m,d)}$ are orthonormal OPs, the $\YY_n^{(m,d)}$ satisfy three-term recurrences of the form~\cite{DX}
\begin{align}
x\YY_n^{(m,d)} = \YY_{n-1}^{(m,d)} \left(B_{n-1,x}^{(m,d)}\right)^{\top} +  \YY_n^{(m,d)} A_{n,x}^{(m,d)} + \YY_{n+1}^{(m,d)} B_{n,x}^{(m,d)},  \label{eq:xYmdrec}
\end{align}
and
\begin{align}
y\YY_n^{(m,d)} = \YY_{n-1}^{(m,d)} \left(B_{n-1,y}^{(m,d)}\right)^{\top} +  \YY_n^{(m,d)} A_{n,y}^{(m,d)} + \YY_{n+1}^{(m,d)} B_{n,y}^{(m,d)},  \label{eq:yYmdrec}
\end{align}
for $n \geq 0$ with $\YY_{-1}:=0$, where
\begin{align}
A_{n,x}^{(m,d)} = \la \YY_n^{(m,d)}, x\YY_{n}^{(m,d)} \ra_{\g_{m,d},w}, \quad 
B_{n,x}^{(m,d)} = \la \YY_{n+1}^{(m,d)}, x\YY_{n}^{(m,d)} \ra_{\g_{m,d},w}, \label{eq:ABxmd}
\end{align}
and $A_{n,y}^{(m,d)}$, $B_{n,y}^{(m,d)}$  are defined similarly. Hence,  $A_{n,x}^{(m,d)}, A_{n,y}^{(m,d)} \in \RR^{\dim \CV_n \times \dim \CV_n}$, $B_{n,x}^{(m,d)}, B_{n,y}^{(m,d)} \in \RR^{\dim \CV_{n+1} \times \dim \CV_n}$ and $A_{n,x}^{(m,d)}$ and  $A_{n,y}^{(m,d)}$ are symmetric.
In quasimatrix notation, the recurrences (\ref{eq:xYmdrec}) and (\ref{eq:yYmdrec}) can be expressed as
\begin{equation*}
x\mathbf{Y}^{(m,d)} = \mathbf{Y}^{(m,d)}J_x^{(m,d)}, \qquad y\mathbf{Y}^{(m,d)} = \mathbf{Y}^{(m,d)}J_y^{(m,d)},
\end{equation*}
where $J_x^{(m,d)}$ and $J_y^{(m,d)}$ are the symmetric, commuting, block-tridiagonal Jacobi matrices of the OPs $\mathbf{Y}^{(m,d)}$.

To express the recurrences (\ref{eq:xYmdrec}) and (\ref{eq:yYmdrec}) in the $ \mathbf{P}^{(m,d)}$ bases, we write the  connection matrix in block form
\begin{equation}
C^{(m,d)} = \left( 
\begin{array}{c | c | c | c}
\CC_0^{(m,d)} & \CC_1^{(m,d)} & \CC_2^{(m,d)} & \cdots
\end{array}\right),
\label{eq:Cmdbs}
\end{equation}
where $\CC_n^{(m,d)} \in \mathbb{R}^{\infty \times \dim \CV_n}$,  then
\begin{equation}
\YY_n^{(m,d)} = \mathbf{P}^{(m,d)}\CC_n^{(m,d)}.  \label{eq:YPmdblock} 
\end{equation} 
 
\begin{prop}
The Jacobi matrices of the OPs $\mathbf{Y}^{(m,d)}$ and the multiplication matrices of the $\mathbf{P}^{(m,d)}$ bases are related as follows
\begin{equation}
J_x^{(m,d)} = \left(C^{(m,d)}\right)^{\top}\mathcal{X}^{(m,d)} C^{(m,d)}, \qquad J_y^{(m,d)} = \left(C^{(m,d)}\right)^{\top}\mathcal{Y}^{(m,d)} C^{(m,d)}, \quad m = 1, 2.\label{eq:jacmult}
\end{equation}
Furthermore, the $\CC_n^{(m,d)}$ satisfy the recurrence relations
\begin{align}
\mathcal{X}^{(m,d)}\CC_n^{(m,d)} = \CC_{n-1}^{(m,d)} \left(B_{n-1,x}^{(m,d)}\right)^{\top} + \CC_n^{(m,d)} A_{n,x}^{(m,d)} + \CC_{n+1}^{(m,d)} B_{n,x}^{(m,d)},  \label{eq:XCmdblock}
\end{align}
and
\begin{align}
\mathcal{Y}^{(m,d)}\CC_n^{(m,d)} = \CC_{n-1}^{(m,d)} \left(B_{n-1,y}^{(m,d)}\right)^{\top} + \CC_n^{(m,d)} A_{n,y}^{(m,d)} + \CC_{n+1}^{(m,d)} B_{n,y}^{(m,d)}. \label{eq:YCmdblock}  
\end{align}
\end{prop}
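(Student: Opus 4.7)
The plan is to derive both identities directly from the connection relation $\mathbf{Y}^{(m,d)} = \mathbf{P}^{(m,d)}C^{(m,d)}$ together with the multiplication matrix definitions in (\ref{eq:xyPmd}) and the quasimatrix form of the three-term recurrences for the OPs $\mathbf{Y}^{(m,d)}$. All steps are linear-algebraic manipulations in quasimatrix notation and use only that $\mathbf{P}^{(m,d)}$ is a basis (so a quasimatrix identity $\mathbf{P}^{(m,d)}M = \mathbf{P}^{(m,d)}N$ forces $M = N$) and that $C^{(m,d)}$ is orthogonal by the previous proposition.

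First I would establish (\ref{eq:jacmult}). Starting from $x\mathbf{Y}^{(m,d)} = \mathbf{Y}^{(m,d)}J_x^{(m,d)}$ and substituting $\mathbf{Y}^{(m,d)} = \mathbf{P}^{(m,d)}C^{(m,d)}$ on both sides gives
\begin{equation*}
\mathbf{P}^{(m,d)}\mathcal{X}^{(m,d)} C^{(m,d)} = x\mathbf{P}^{(m,d)} C^{(m,d)} = \mathbf{P}^{(m,d)} C^{(m,d)} J_x^{(m,d)},
\end{equation*}
so by the basis property $\mathcal{X}^{(m,d)} C^{(m,d)} = C^{(m,d)} J_x^{(m,d)}$. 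Multiplying on the left by $(C^{(m,d)})^{\top}$ and invoking orthogonality $(C^{(m,d)})^{\top} C^{(m,d)} = I$ yields the first identity in (\ref{eq:jacmult}); the second follows verbatim by replacing $x$ with $y$ and $\mathcal{X}^{(m,d)}$ with $\mathcal{Y}^{(m,d)}$.

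Next I would derive (\ref{eq:XCmdblock}) and (\ref{eq:YCmdblock}). The block decomposition (\ref{eq:Cmdbs}) combined with (\ref{eq:YPmdblock}) lets us rewrite the block three-term recurrence (\ref{eq:xYmdrec}) as
\begin{equation*}
x\mathbf{P}^{(m,d)}\CC_n^{(m,d)} = \mathbf{P}^{(m,d)}\!\left[\CC_{n-1}^{(m,d)} \bigl(B_{n-1,x}^{(m,d)}\bigr)^{\top} + \CC_n^{(m,d)} A_{n,x}^{(m,d)} + \CC_{n+1}^{(m,d)} B_{n,x}^{(m,d)}\right].
\end{equation*}
The left-hand side equals $\mathbf{P}^{(m,d)}\mathcal{X}^{(m,d)}\CC_n^{(m,d)}$ by (\ref{eq:xyPmd}), and cancelling $\mathbf{P}^{(m,d)}$ (basis property) delivers (\ref{eq:XCmdblock}). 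The same argument applied to (\ref{eq:yYmdrec}) with $\mathcal{Y}^{(m,d)}$ gives (\ref{eq:YCmdblock}).

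There is no substantive obstacle: both identities are essentially ``change of basis'' statements, and the only subtlety is the mild one of operating on infinite quasimatrices. I would mention briefly that the basis property used for the cancellation step is justified because the columns of $\mathbf{P}^{(m,d)}$ form a basis for $\Pi(\g_{m,d})$ by Propositions~\ref{prop:curvepolyform} and~\ref{prop:Pmorthog}, so a linear combination of those columns that vanishes identically must have zero coefficient vector.
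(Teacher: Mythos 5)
Your proposal is correct and follows essentially the same route as the paper: both are change-of-basis computations in quasimatrix notation, and your treatment of (\ref{eq:XCmdblock})--(\ref{eq:YCmdblock}) by substituting $\YY_n^{(m,d)} = \mathbf{P}^{(m,d)}\CC_n^{(m,d)}$ into (\ref{eq:xYmdrec})--(\ref{eq:yYmdrec}) and cancelling $\mathbf{P}^{(m,d)}$ is exactly what the paper does. The only cosmetic difference is in (\ref{eq:jacmult}): the paper computes $J_x^{(m,d)} = \la \mathbf{Y}^{(m,d)}, x\mathbf{Y}^{(m,d)} \ra_{\g_{m,d},w}$ directly and uses $\la \mathbf{P}^{(m,d)}, \mathbf{P}^{(m,d)} \ra_{\g_{m,d},w} = I$, whereas you first deduce $\mathcal{X}^{(m,d)}C^{(m,d)} = C^{(m,d)}J_x^{(m,d)}$ and then apply $(C^{(m,d)})^{\top}C^{(m,d)} = I$ — equivalent ingredients, since the orthogonality of $C^{(m,d)}$ was itself proved from the orthonormality of the two bases.
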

\begin{proof}
Since $x\mathbf{Y}^{(m,d)} = x\mathbf{P}^{(m,d)}C^{(m,d)}=\mathbf{P}^{(m,d)}\mathcal{X}^{(m,d)} C^{(m,d)}$,  we have that
\begin{align*}
& J_x^{(m,d)} = \la \mathbf{Y}^{(m,d)}, x\mathbf{Y}^{(m,d)} \ra_{\g_{m,d},w} = \\
&   \left(C^{(m,d)}\right)^{\top}  \la \mathbf{P}^{(m,d)}, \mathbf{P}^{(m,d)} \ra_{\g_{m,d},w} \mathcal{X}^{(m,d)} C^{(m,d)} =  \left(C^{(m,d)}\right)^{\top}\mathcal{X}^{(m,d)} C^{(m,d)},
\end{align*}
for $m = 1, 2$.

Using (\ref{eq:YPmdblock}) and setting $x\YY_n^{(m,d)} = \mathbf{P}^{(m,d)}\mathcal{X}^{(m,d)}\CC_n^{(m,d)}$ in (\ref{eq:xYmdrec}), we arrive at (\ref{eq:XCmdblock}) and (\ref{eq:YCmdblock}) can be derived similarly. 
\end{proof}


The standard Stieltjes method for computing OPs and their recurrence coefficients uses a discretised quadrature rule to compute the required inner products, see~\cite{bru,Gautschi}. Hence, the (discretised) OPs need to be evaluated at a discrete set of points. By contrast, we compute OPs in ``coefficient space'' (as opposed to ``function-value space'', as is done in the standard approach) via  (\ref{eq:XCmdblock}) and (\ref{eq:YCmdblock}) to generate subsequent blocks of the connection matrices. Computation of the inner products (\ref{eq:ABxmd}) is accomplished via
\begin{align}
B_{n-1,x}^{(m,d)} = 
\left( \CC_n^{(m,d)} \right)^{\top}\mathcal{X}^{(m,d)}\CC_{n-1}^{(m,d)}, \qquad
A_{n,x}^{(m,d)} = 
\left( \CC_n^{(m,d)} \right)^{\top}\mathcal{X}^{(m,d)}\CC_{n}^{(m,d)}, \label{eq:ABCmd}
\end{align}
which follows from (\ref{eq:ABxmd}) and (\ref{eq:YPmdblock}). The inner products required for $B_{n-1,y}^{(m,d)}$ and $A_{n,y}^{(m,d)}$ are computed similarly.  Hence, once we have the multiplication matrices $\mathcal{X}^{(m,d)}$ and $\mathcal{Y}^{(m,d)}$ (which are built from the Jacobi matrices $J(w)$, $J(\phi w)$ and the raising matrix $R$), we compute inner products via only finite matrix multiplications, with no need for quadrature of the evaluation of OPs. 




Since $J_x^{(m,d)}$ and $J_y^{(m,d)}$ are symmetric, block-tridiagonal matrices, we note from (\ref{eq:jacmult}) that the connection matrices simultaneously block-tridiagonalise the symmetric, commuting matrices $\mathcal{X}^{(m,d)}$ and $\mathcal{Y}^{(m,d)}$ via orthogonal similarity transformations. Hence, our approach, which is essentially the Gram--Schmidt process applied in coefficient space to OPs in the $\mathbf{P}^{(m,d)}$ bases via the connection coefficients, can be viewed as the Lanczos procedure applied to $\mathcal{X}^{(m,d)}$ and $\mathcal{Y}^{(m,d)}$.

\subsection{Construction of the initial columns of the connection matrices}\label{sect:Cinit}

Let $\bm{e}_k$ be  defined as in (\ref{eq:basisvec})
and let $\mathbf{C}^{(m,d)}_j$ denote column $j$ of the connection matrix $C^{(m,d)}$, with $j \geq 0$. 
\begin{prop}\label{prop:Cinit}
The first few columns of the connection matrices are given by
\begin{equation}
\CC_0^{(1,d)} = \mathbf{C}^{(1,d)}_0 = \bm{e}_0, \qquad \CC_1^{(1,d)} = \left(
\begin{array}{c c}
\bm{e}_1 & \mathbf{C}^{(1,d)}_2
\end{array}
\right),   \label{eq:C1dinblocks}
\end{equation}
and for $m = 2$,
\begin{equation}
\CC_0^{(2,d)} = \bm{e}_0, \qquad \CC_1^{(2,d)} = \left(
\begin{array}{c c}
\bm{e}_1 & \bm{e}_2
\end{array}
\right), \qquad \CC_2^{(2,d)} =  \left(
\begin{array}{c c c}
\bm{e}_3 & \bm{e}_4 & \mathbf{C}^{(2,d)}_5
\end{array}
\right),  \label{eq:C2dindblocks}
\end{equation}
where
\begin{equation}
\mathbf{C}^{(1,d)}_2 = \frac{1}{b_{2,1}^{0,y}}\left( \mathcal{Y}_{2,0}^{(1,d)}\bm{e}_2 + \cdots +  \mathcal{Y}_{d,0}^{(1,d)} \bm{e}_d  \right), \quad \left( b_{2,1}^{0,y}\right)^2 = \left( \mathcal{Y}_{2,0}^{(1,d)}\right)^2 + \cdots + \left( \mathcal{Y}_{d,0}^{(1,d)} \right)^2,  \label{eq:c12}
\end{equation}
\begin{equation}
\mathbf{C}^{(2,d)}_5 = \frac{1}{b_{3,2}^{1,y}}\left( r_{0,3}\bm{e}_5 + \cdots +  r_{0,d} \bm{e}_{2d-1}  \right), \quad \left( b_{3,2}^{1,y}\right)^2 = r_{0,3}^2 + \cdots + r_{0,d}^2,  \label{eq:c25}
\end{equation}
and $\mathcal{Y}_{i,j}^{(1,d)}$, $i, j \geq 0$ denotes the $(i,j)$ entry of $\mathcal{Y}^{(1,d)}$ (defined in (\ref{eq:xym1ops})).
Furthermore, the first few block-entries of the Jacobi matrices $J_x^{(m,d)}$ and $J_y^{(m,d)}$ are given by (see (\ref{eq:xYmdrec})--(\ref{eq:yYmdrec})) 
\begin{equation}
A_{0,x}^{(1,d)} = \left(
\begin{array}{c}
\mathcal{X}_{0,0}^{(1,d)}
\end{array}
\right), \qquad
B_{0,x}^{(1,d)} = \left(
\begin{array}{c c}
\mathcal{X}_{1,0}^{(1,d)} & 0
\end{array}
\right)^{\top}, \label{eq:ABxm1n0} 
\end{equation}
\begin{equation}
A_{0,y}^{(1,d)} = \left(
\begin{array}{c}
\mathcal{Y}_{0,0}^{(1,d)}
\end{array}
\right), \qquad
B_{0,y}^{(1,d)} = \left(
\begin{array}{c c}
\mathcal{Y}_{1,0}^{(1,d)} & b_{2,1}^{0,y}
\end{array}
\right)^{\top},  \label{eq:ABym1n0}
\end{equation}
and for $m = 2$,
\begin{equation}
A_{0,x}^{(2,d)} = \mathcal{X}^{(2,d)}_{0,0}, \quad 
B_{0,x}^{(2,d)} = \mathcal{X}^{(2,d)}_{1,0}, \quad
A_{1,x}^{(2,d)} = \mathcal{X}^{(2,d)}_{1,1}, \quad
B_{1,x}^{(2,d)} = \left(
\mathcal{X}^{(2,d)}_{2,1}
\begin{array}{c}
0 \\
0
\end{array}
\right)^{\top},\label{eq:ABxm2n0} 
\end{equation}
and
\begin{equation}
A_{0,y}^{(2,d)} = \left( \begin{array}{c}
0
\end{array} \right), \quad 
B_{0,y}^{(2,d)} = \mathcal{Y}^{(2,d)}_{1,0}, \quad
A_{1,y}^{(2,d)} = \mathcal{Y}^{(2,d)}_{1,1}, \quad
B_{1,y}^{(2,d)} = \left(
\mathcal{Y}^{(2,d)}_{2,1}
\begin{array}{c}
0 \\
b_{3,2}^{1,y}
\end{array}
\right)^{\top}.  \label{eq:ABym2n0}
\end{equation}
\end{prop}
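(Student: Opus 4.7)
My plan is to split the proposition into three pieces: (i) read off the initial ``trivial'' columns of $\CC_0^{(m,d)}$, $\CC_1^{(m,d)}$ (and $\CC_2^{(2,d)}$ for $m=2$) directly from the explicit identifications (\ref{eq:YPinm1})--(\ref{eq:YPinm2}); (ii) use the three-term recurrence in $y$ to carry out a single Gram--Schmidt step that identifies the one remaining column $\mathbf{C}^{(1,d)}_2$ (resp.\ $\mathbf{C}^{(2,d)}_5$) along with the norming constant $b_{2,1}^{0,y}$ (resp.\ $b_{3,2}^{1,y}$); (iii) read off the low-order Jacobi blocks $A_{n,\cdot}^{(m,d)}, B_{n,\cdot}^{(m,d)}$ from the defining inner products (\ref{eq:ABxmd}) (and their $y$-analogues). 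Under the column ordering of $\mathbf{P}^{(m,d)}$ given by (\ref{eq:P1dqms})--(\ref{eq:P2blocks})---so that $P^{(2,d)}_{n,k}$ occupies column $2n-1+k$ for $n \geq 1$---step (i) is immediate: each $Y^{(m,d)}_{n,k} = P^{(m,d)}_{n,k-1}$ is literally one $\bm{e}_j$.

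For step (ii) I would use the recurrence (\ref{eq:yYmdrec}) at $n=0$ when $m=1$ and at $n=1$ when $m=2$. In the first case, since $Y^{(1,d)}_{0,1} = P^{(1,d)}_{0,0}$, expanding $yP^{(1,d)}_{0,0}$ via $\mathcal{Y}^{(1,d)} = \phi(J(w))$ gives $yP^{(1,d)}_{0,0} = \sum_{k=0}^{d} \mathcal{Y}^{(1,d)}_{k,0}\, P^{(1,d)}_{k,0}$; the $k=0$ and $k=1$ terms are exactly the Gram--Schmidt projections onto $Y^{(1,d)}_{0,1}$ and $Y^{(1,d)}_{1,1}$, so the tail $\sum_{k=2}^{d}\mathcal{Y}^{(1,d)}_{k,0}\, P^{(1,d)}_{k,0}$ is forced to equal $b_{2,1}^{0,y}\,Y^{(1,d)}_{1,2}$. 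Orthonormality of the $P$-basis (Proposition~\ref{prop:Pmorthog}) then pins down $(b_{2,1}^{0,y})^2$ as a sum of squares and yields $\mathbf{C}^{(1,d)}_2$ after normalisation, matching (\ref{eq:c12}). The $m=2$ argument runs in parallel: using (\ref{eq:ymultyp1}) with $n=0$, $yP^{(2,d)}_{1,1} = \sum_{k=0}^{d} r_{0,k}\, P^{(2,d)}_{k,0}$, from which I subtract the components along $\YY_0^{(2,d)}$, $\YY_1^{(2,d)}$ and along $Y^{(2,d)}_{2,1} = P^{(2,d)}_{2,0}$; the coefficient on $Y^{(2,d)}_{2,2} = P^{(2,d)}_{2,1}$ vanishes automatically because no $P^{(2,d)}_{\cdot,1}$ term appears in the expansion. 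What remains is $b_{3,2}^{1,y}\,Y^{(2,d)}_{2,3} = \sum_{k=3}^{d} r_{0,k}\, P^{(2,d)}_{k,0}$, which under the indexing $P^{(2,d)}_{k,0} \leftrightarrow \bm{e}_{2k-1}$ gives (\ref{eq:c25}).

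For step (iii) each Jacobi block is obtained by pairing the already-computed $\CC_n^{(m,d)}$ against $\mathcal{X}^{(m,d)}\CC_{n'}^{(m,d)}$ or $\mathcal{Y}^{(m,d)}\CC_{n'}^{(m,d)}$ and invoking orthonormality of the $P$-basis. Nonzero entries are read straight off the corresponding blocks of $\mathcal{X}^{(m,d)}$ and $\mathcal{Y}^{(m,d)}$ constructed in the preceding subsection; the zero entries (e.g.\ the second component of $B_{0,x}^{(1,d)}$ and the last row of $B_{1,x}^{(2,d)}$) arise because the ``new'' OPs $Y^{(1,d)}_{1,2}$, $Y^{(2,d)}_{2,3}$ live in the span of high-index $P$-basis elements while $xP^{(1,d)}_{0,0}$ and $x\PP_1^{(2,d)}$ have only low-index components, and the remaining non-trivial entries $b_{2,1}^{0,y}$ in $B_{0,y}^{(1,d)}$ and $b_{3,2}^{1,y}$ in $B_{1,y}^{(2,d)}$ are precisely what step (ii) produced. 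The main obstacle is purely bookkeeping: correctly translating the bivariate $(n,k)$-index of $P^{(m,d)}$ and $Y^{(m,d)}$ into the linear column index of $C^{(m,d)}$, and tracking which parts of the $\mathcal{Y}^{(m,d)}$-expansion get ``used up'' by the Gram--Schmidt projections onto already-identified OPs versus which survive to define the next column.
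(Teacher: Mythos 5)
Your proposal is correct and follows essentially the same route as the paper: read the trivial columns directly off the identifications (\ref{eq:YPinm1})--(\ref{eq:YPinm2}), then compare the three-term recurrence (\ref{eq:yYmdrec}) at $n=0$ (resp.\ $n=1$) with the explicit expansions $yP^{(1,d)}_{0,0}=\sum_k \mathcal{Y}^{(1,d)}_{k,0}P^{(1,d)}_{k,0}$ and (\ref{eq:ymultyp1}) to isolate $b^{0,y}_{2,1}Y^{(1,d)}_{1,2}$ and $b^{1,y}_{3,2}Y^{(2,d)}_{2,3}$, with the norming constants fixed by orthonormality of the $P$-basis, and finally read the Jacobi blocks off the inner products (\ref{eq:ABCmd}). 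Your index bookkeeping ($P^{(2,d)}_{k,0}\leftrightarrow\bm{e}_{2k-1}$, the vanishing coefficient on $Y^{(2,d)}_{2,2}$, and the zero entries of $B^{(1,d)}_{0,x}$ and $B^{(2,d)}_{1,x}$) all checks out against the paper's computation.
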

\begin{proof}
The first few columns of $C^{(m,d)}$ as well as the matrices (\ref{eq:ABxm1n0})--(\ref{eq:ABym2n0}) follow from (\ref{eq:YPinm1}), (\ref{eq:YPinm2}) and (\ref{eq:ABxmd}). We shall obtain $\mathbf{C}^{(1,d)}_2$, $\mathbf{C}^{(2,d)}_5$ and the entries  $b_{2,1}^{0,y}$ and $b_{3,2}^{1,y}$ by constructing, respectively, $Y_{1,2}^{(1,d)}$ and $Y_{2,3}^{(2,d)}$. 

The recurrence (\ref{eq:yYmdrec}) and (\ref{eq:ABym1n0}) imply
\begin{equation*}
yY_{0,1}^{(1,d)} = \mathcal{Y}_{0,0}^{(1,d)} Y_{0,1}^{(1,d)} + \mathcal{Y}_{1,0}^{(1,d)} Y_{1,1}^{(1,d)} +  b_{2,1}^{0,y}Y_{1,2}^{(1,d)},
\end{equation*}
on the other hand, using the multiplication matrix $\mathcal{Y}^{(1,d)}$ in (\ref{eq:xym1ops}),
\begin{align*}
yY_{0,1}^{(1,d)} & = \phi(x)P_{0,1}^{(1,d)} =  \mathcal{Y}^{(1,d)}_{0,0}P_{0,0}^{(1,d)} + \cdots + \mathcal{Y}^{(1,d)}_{d,0}P_{d,0}^{(1,d)} \\
& = \mathcal{Y}_{0,0}^{(1,d)} Y_{0,1}^{(1,d)} + \mathcal{Y}_{1,0}^{(1,d)} Y_{1,1}^{(1,d)} + \mathcal{Y}_{2,0}^{(1,d)} P_{2,0}^{(1,d)} + \cdots +  \mathcal{Y}_{d,0}^{(1,d)} P_{d,0}^{(1,d)},
\end{align*}
comparing these two expressions, we have that
\begin{align*}
Y_{1,2}^{(1,d)} = \frac{1}{b_{2,1}^{0,y}}\left( \mathcal{Y}_{2,0}^{(1,d)} P_{2,0}^{(1,d)} + \cdots +  \mathcal{Y}_{d,0}^{(1,d)} P_{d,0}^{(1,d)}  \right) = \mathbf{P}^{(1,d)}\mathbf{C}^{(1,d)}_2, 
\end{align*}
from which (\ref{eq:c12}) follows and the expression for $b_{2,1}^{0,y}$ follows from the requirement that $\| Y_{1,2}^{(1,d)} \|^2_{\g_{1,d},w} = \la Y_{1,2}^{(1,d)}, Y_{1,2}^{(1,d)} \ra_{\g_{1,d},w} = 1$, which implies that
\begin{equation*}
\la Y_{1,2}^{(1,d)}, Y_{1,2}^{(1,d)} \ra_{\g_{1,d},w} =  \la \mathbf{P}^{(1,d)}\mathbf{C}^{(1,d)}_2, \mathbf{P}^{(1,d)}\mathbf{C}^{(1,d)}_2 \ra_{\g_{1,d},w} = \left(\mathbf{C}^{(1,d)}_2\right)^{\top}\mathbf{C}^{(1,d)}_2 = 1;
\end{equation*}
  $\mathbf{C}^{(1,d)}_5$ and $b_{3,2}^{1,y}$ can be derived similarly  by using (\ref{eq:ABym2n0}) and (\ref{eq:ymultyp1}). 
\end{proof}
%
%
%
%
%
Note from (\ref{eq:c12})--(\ref{eq:c25}) that the entries $b^{0,y}_{2,1}$ and $b^{1,y}_{3,2}$, whose values are determined by the normalisation of the OPs, are unique up to a choice of sign. Throughout, we shall let entries that arise from normalisation be positive.

\subsection{Sequences of orthogonalisation for the subsequent columns of the connection matrices}

We now consider the sequence of multiplications by $x$ and $y$ to be used for constructing the OPs. We first note that different sequences of multiplication can result in different Jacobi matrices $J_x^{(m,d)}$, $J_y^{(m,d)}$ and connection matrices $C^{(m,d)}$. For example, suppose $m = 1$, we set $Y^{(1,d)}_{0,1} = P_{0,0}^{(1,d)}$, first multiply by $y$ and orthogonalise to obtain $Y^{(1,d)}_{1,1}$, and then multiply by $x$ and orthogonalise to obtain $Y^{(1,d)}_{1,2}$, then, cf.~(\ref{eq:ABxm1n0}) and (\ref{eq:ABym1n0}),
\begin{equation*}
A_{0,y}^{(1,d)} = \left(
\begin{array}{c}
\mathcal{Y}_{0,0}^{(1,d)}
\end{array}
\right), \qquad
B_{0,y}^{(1,d)} = \left(
\begin{array}{c c}
b_{1,1}^{0,y} & 0
\end{array}
\right)^{\top}, 
\end{equation*}
\begin{equation*}
A_{0,x}^{(1,d)} = \left(
\begin{array}{c}
\mathcal{X}_{0,0}^{(1,d)}
\end{array}
\right), \qquad
B_{0,x}^{(1,d)} = \left(
\begin{array}{c c}
b^{0,x}_{1,1} & b^{0,x}_{2,1}
\end{array}
\right)^{\top}, 
\end{equation*}
and
\begin{align*}
Y_{1,1}^{(1,d)} = \frac{1}{b_{1,1}^{0,y}}\left( \mathcal{Y}_{1,0}^{(1,d)} P_{1,0}^{(1,d)} + \cdots +  \mathcal{Y}_{d,0}^{(1,d)} P_{d,0}^{(1,d)}  \right) = \mathbf{P}^{(1,d)} \mathbf{C}_{1}^{(1,d)},
\end{align*}
where
\begin{align*}
\left( b_{1,1}^{0,y}\right)^2 = \left( \mathcal{Y}_{1,0}^{(1,d)}\right)^2 + \cdots + \left( \mathcal{Y}_{d,0}^{(1,d)} \right)^2, \:
b_{1,1}^{0,x} = \la x Y_{0,1}^{(1,d)}, Y_{1,1}^{(1,d)} \ra_{\g_{1,d},w} = \frac{\mathcal{X}^{(1,d)}_{1,0}\mathcal{Y}^{(1,d)}_{1,0}}{b_{1,1}^{0,y}},
\end{align*}
and there are (generally nonzero) constants $C^{(1,d)}_{1,2}, \ldots, C^{(1,d)}_{d,2}$ such that
\begin{align*}
Y_{1,2}^{(1,d)} &= \frac{1}{b_{2,1}^{0,x}}\left(xY_{0,1}^{(1,d)} - \mathcal{X}_{0,0}^{(1,d)}Y_{0,1}^{(1,d)} - b_{1,1}^{0,x}Y_{1,1}^{(1,d)}\right) \\
& =C^{(1,d)}_{1,2}P_{1,0}^{(1,d)} + \cdots + C^{(1,d)}_{d,2}P_{d,0}^{(1,d)} =  \mathbf{P}^{(1,d)}\mathbf{C}_{2}^{(1,d)}.
\end{align*}
Hence (cf.~(\ref{eq:C1dinblocks}) and (\ref{eq:c12})), $\CC^{(1,d)}_0 = \bm{e}_0$ and $\CC^{(1,d)}_1 = \left(
\begin{array}{c c}
\mathbf{C}_{1}^{(1,d)} & \mathbf{C}_{2}^{(1,d)}
\end{array}
\right)$, where 
\begin{equation*}
\mathbf{C}_{1}^{(1,d)} = \frac{1}{b_{1,1}^{0,y}}\left( \mathcal{Y}_{1,0}^{(1,d)}\bm{e}_1 + \cdots +  \mathcal{Y}_{d,0}^{(1,d)} \bm{e}_d  \right), \qquad
\mathbf{C}_{2}^{(1,d)} = C^{(1,d)}_{1,2}\bm{e}_1 + \cdots +  C^{(1,d)}_{d,2} \bm{e}_d .
\end{equation*}
This illustrates that the block $\CC^{(1,d)}_1$  in (\ref{eq:C1dinblocks}), obtained by first multiplying by $x$ and orthogonalising, is sparser (has fewer nonzero elements) than the one obtained by first multiplying by $y$. The same conclusion holds true for the case $m = 2$.

For ease of reference, we restate the monomial basis of degree $n$ (given in (\ref{eq:Bgc1}) and (\ref{eq:Bgc3})) on the curves considered in this section:
\begin{align}
\CB_{n}(\g_{m,d}) = \left\{x^{n-k}y^k  \right\}_{k=0}^{n},\quad  0\leq n \leq d-1,  \quad \CB_{n}(\g_{m,d}) = \left\{x^{d-k}y^{n+k-d}  \right\}_{k=1}^{d}, \quad n \geq d. \label{eq:gmdmonob}
\end{align}
To ensure the OPs span the same space as the monomial basis and in light of the example above, to construct $\YY_{n+1}^{(m,d)}$ for $n \leq d-2$, we first multiply by $x$ (and orthogonalise) and then multiply by $y$. That is, $Y_{n+1,k}^{(m,d)}$, $k = 1, \ldots, n+1$  are generated by orthogonalising $xY_{n,k}^{(m,d)}$ and $Y^{(m,d)}_{n+1,n+2}$ is found by orthogonalising $yY^{(m,d)}_{n,n+1}$. We refer to this sequence as the \emph{primary sequence of orthogonalisation}. This implies that for $0 \leq n \leq d-1$, the  expansions of the OPs in the monomial basis (\ref{eq:gmdmonob}) take the form  
\begin{equation}
Y^{(m,d)}_{n,k} =   \a_{k,k}^{(n)}x^{n-k+1}y^{k-1}+ \cdots  + \a_{1,k}^{(n)}x^{n-1}y + \a_{0,k}^{(n)}x^n  + \mathcal{O}(x^{\ell_1}y^{\ell_2}), \qquad 0 \leq \ell_1+\ell_2<n,  \label{eq:monoYmdspan1}
\end{equation}
for $k = 1, \ldots, n+1$, with $\a_{k,k}^{(n)} \neq 0$, 
and where $\mathcal{O}(x^{\ell_1}y^{\ell_2})$ represents the remainder of the monomial  expansion, which consists of terms of degree less than $n$ (hence $\ell_1+\ell_2<n$).
Hence, for the $Y_{n,k}^{(m,d)}$ generated via the primary sequence of orthogonalization, it follows from (\ref{eq:gmdmonob}) that   $\deg_{\g_{m,d}}(Y_{n,k}^{(m,d)}) = n$.

To generate $\YY_d^{(m,d)}$, we note that $Y_{d,1}^{(m,d)}$ cannot be constructed by orthogonalising $xY_{d-1,1}^{(m,d)}$ since $xY_{d-1,1}^{(m,d)}$ has degree less than $d$ on $\g_{m,d}$. This is because its monomial expansion is  $xY_{d-1,1}^{(m,d)} = \a_{0,1}^{(d-1)}x^d +  \mathcal{O}(x^{\ell_1}y^{\ell_2})$ with $\ell_1 + \ell_2 < d$ 
and since $y = \phi$ (if $m = 1$) or $y^2 = \phi$ (if $m = 2$), with $\phi = c_dx^d + \cdots + c_1x + c_0$, $c_d \neq 0$, $\deg_{\g_{m,d}}(x^d) = \deg_{\g_{m,d}}(xY_{d-1,1}^{(m,d)}) < d$. However, $\YY_d^{(m,d)}$ (or, more generally, $\YY_n^{(m,d)}$) can be constructed by orthogonalising  $xY_{n-1,k}^{(m,d)}$, $k = 2, \ldots, d$, and then $yY_{n-1,d}^{(m,d)}$ for $n \geq d$ because these polynomials are linearly independent and it follows from (\ref{eq:gmdmonob}) that $\deg_{\g_{m,d}}(yY_{n-1,d}^{(m,d)}) = \deg_{\g_{m,d}}(xY_{n-1,k}^{(m,d)}) = n$, $k = 2, \ldots, d$. Alternatively, and for the same reasons, we can orthogonalise $yY_{n-1,k}^{(m,d)}$, $k = 1, \ldots, d$ to construct $\YY_n^{(m,d)}$, $n \geq d$. It can be verified (using the commutativity of multiplication by $x$ and $y$ and applying the Gram--Schmidt procedure to the monomial basis) that both these sequences yield the same OP basis.
Hence, we refer to either of these sequences as the \emph{secondary sequence of orthogonalisation}. This  implies that for $n \geq d$, the monomial expansions of the OPs are
\begin{equation}
Y^{(m,d)}_{n,k} =   \a_{k,k}^{(n)} x^{d-k}y^{n-d+k}+ \cdots + \a_{2,k}^{(n)}x^{d-2}y^{n-d+2} + \a_{1,k}^{(n)}x^{d-1}y^{n-d+1} +   \mathcal{O}(x^{\ell_1}y^{\ell_2}),  \label{eq:monoYmdspan2} 
\end{equation}
for $k = 1, \ldots, d$, with $\a_{k,k}^{(n)} \neq 0$, and where $\ell_1 + \ell_2 < n$.


To summarize: the $Y_{n,k}^{(m,d)}$ generated via the primary and secondary sequences of orthogonalization span the same space as the monomial basis (\ref{eq:gmdmonob}) on the curve $\g_{m,d}$ and $\deg_{\g_{m,d}}(Y_{n,k}^{(m,d)}) = n$. Since the $Y_{n,k}^{(m,d)}$ thus generated are orthogonal by construction, they are (orthonormal) OPs on $\g_{m,d}$ with respect to $\la \cdot, \cdot \ra_{\g_{m,d},w}$. The $\YY_n^{(m,d)}$ constructed in this manner are in $\CV_n$ (the space of bivariate OPs of degree $n$ with respect to $\langle \cdot, \cdot \rangle_{\g_{m,d},w}$) but note that  if $\mathcal{C} \in \RR^{d \times d}$ is an orthogonal matrix, then  $\YY_n^{(m,d)} \mathcal{C}$ (for $n \geq d-1$) are also in $\CV_n$ since $\la \YY_n^{(m,d)} \mathcal{C}, \YY_n^{(m,d)} \mathcal{C} \ra_{\g_{m,d},w}  = I \in \RR^{d \times d}$.

We now describe the computation of the connection coefficients and the structure of the connection matrices (examples of which are shown in Figures~\ref{fig:C_m_1} and~\ref{fig:Cexamples_curve}) in more detail. 

\subsection{Recursive formulae for the connection coefficients}  \label{sect:rec4ms}

Let $C^{(m,d)}_{r,c}$, $r, c \geq 0$ denote the entries of the connection matrix. We introduce the map $\ell = \ell(n,k)$, from the indices $(n,k)$ of the OP $Y^{(m,d)}_{n,k}$ for $m = 1, 2$ to the column index of the quasimatrix
\begin{equation*}
\mathbf{Y}^{(m,d)} = \left(
\begin{array}{c  c c  c}
Y^{(m,d)}_{0,1} & Y^{(m,d)}_{1,1} & Y^{(m,d)}_{1,2} & \cdots
\end{array}
\right).
\end{equation*}
The OP $Y^{(m,d)}_{0,1}$ is in column $0$ of $\mathbf{Y}^{(m,d)}$,  $Y^{(m,d)}_{1,1}$ is in column $1$ of $\mathbf{Y}^{(m,d)}$, etc. Hence,
\begin{align}
\ell(n,k) = \sum_{\ell=0}^{n-1}\dim \CV_{\ell} + k-1 = \begin{cases}
\vspace{0.25cm}\displaystyle{\frac{n(n+1)}{2}} + k-1, & 0\leq n \leq d-1,\qquad 1 \leq k \leq n+1 \\
\displaystyle{dn - \frac{d(d - 1)}{2}} + k-1, & n \geq d, \qquad 1 \leq k \leq d 
\end{cases},  \label{eq:ldef} 
\end{align}
where $\dim \CV_{\ell} = \min \{\ell+1, d\}$.  Let $\mathbf{C}^{(m,d)}_{\ell(n,k)}\in \RR^{\infty \times 1}$ denote column number $\ell(n,k) \geq 0$ of the connection matrix $C^{(m,d)}$, then since $\mathbf{Y}^{(m,d)} = \mathbf{P}^{(m,d)}C^{(m,d)}$, we have that
\begin{equation}
Y_{n,k}^{(m,d)} = \mathbf{P}^{(m,d)}\mathbf{C}^{(m,d)}_{\ell(n,k)},  \label{eq:ypc}
\end{equation}
which expresses (\ref{eq:Y1exp})--(\ref{eq:Y2exp}) in quasimatrix notation. For $m = 1$, it follows from (\ref{eq:Y1exp}) and the orthogonality of the $P^{(1,d)}_{k,0}$ polynomials with respect to $\la \cdot, \cdot \ra_{\g_{1,d},w}$ that
\begin{equation}
C^{(1,d)}_{r,\ell(n,k)} = \la Y^{(1,d)}_{n,k}, P^{(1,d)}_{r,0} \ra_{\g_{1,d},w}, \qquad r \geq 0, \label{eq:Cijdef}
\end{equation}
and similarly, for $m = 2$, (\ref{eq:Y2exp}) and the orthogonality of the $P^{(2,d)}_{k,0}$, $k \geq 0$ and $P^{(2,d)}_{k,1}$, $k \geq 1$ polynomials imply that $C^{(2,d)}_{0,0} = \la Y^{(2,d)}_{0,1}, P^{(2,d)}_{0,0} \ra_{\g_{2,d},w}$ and 
\begin{equation}
C^{(2,d)}_{2r-1,\ell(n,k)} = \la Y^{(2,d)}_{n,k}, P^{(2,d)}_{r,0} \ra_{\g_{2,d},w}, \qquad C^{(2,d)}_{2r,\ell(n,k)} = \la Y^{(2,d)}_{n,k}, P^{(2,d)}_{r,1} \ra_{\g_{2,d},w}, \qquad r \geq 1.  \label{eq:C2coeffs}
\end{equation}

Let $a_{i,j}^{n,x}$ for $i, j \geq 1$ denote an entry of $A_{n,x}^{(1,d)}$ or $A_{n,x}^{(2,d)}$, where it should be clear from the context whether $m =1$ or $m = 2$, and similarly, $b_{i,j}^{n,x}$ denotes an entry of $B_{n,x}^{(1,d)}$ or $B_{n,x}^{(2,d)}$. Likewise,   $a_{i,j}^{n,y}$ and  $b_{i,j}^{n,y}$ denote entries of, respectively, $A_{n,y}^{(1,d)}$ or $A_{n,y}^{(2,d)}$ and $B_{n,y}^{(1,d)}$ or $B_{n,y}^{(2,d)}$.

Using the known initial columns of the connection matrices given in Proposition~\ref{prop:Cinit}, we can recursively compute entries in the subsequent columns using the results in Propositions~\ref{prop:C1forms}, \ref{prop:C2forms}, \ref{prop:C1struct} and~\ref{prop:C2struct}


\begin{prop}\label{prop:C1forms}
For $m = 1$, the connection coefficients satisfy the following equations: suppose first that $0 \leq n \leq d-2$, then for $k = 1, \ldots, n+1$
\begin{align}
 b^{n,x}_{k,k}C_{j,\ell(n+1,k)}^{(1,d)} = &\sum_{r=j-1}^{j+1}\mathcal{X}^{(1,d)}_{r,j}C_{r,\ell(n,k)}^{(1,d)} -  \sum_{r=k}^{n}b^{n-1,x}_{k,r}C_{j,\ell(n-1,r)}^{(1,d)} \label{eq:C1}  \\
&  - \sum_{r=1}^{n+1}a^{n,x}_{r,k}C_{j,\ell(n,r)}^{(1,d)} - \sum_{r=1}^{k-1}b^{n,x}_{r,k}C_{j,\ell(n+1,r)}^{(1,d)}, \notag
\end{align}
and we obtain the column $\mathbf{C}^{(1,d)}_{\ell(n+1,n+2)}$ from 
\begin{align}
 \hspace*{-0 cm} b^{n,y}_{n+2,n+1}C_{j,\ell(n+1,n+2)}^{(1,d)} = & \sum_{r=j-d}^{j+d}\mathcal{Y}^{(1,d)}_{r,j}C_{r,\ell(n,n+1)}^{(1,d)} -  b^{n-1,y}_{n+1,n}C_{j,\ell(n-1,n)}^{(1,d)}  \label{eq:C2} \\
& - \sum_{r=1}^{n+1}a^{n,y}_{r,n+1}C_{j,\ell(n,r)}^{(1,d)} - \sum_{r=1}^{n+1}b^{n,y}_{r,n+1}C_{j,\ell(n+1,r)}^{(1,d)};\notag
\end{align}
the entries of $\mathbf{C}^{(1,d)}_{\ell(d,k)}$ for $k = 1, \ldots, d$ follow from
\begin{align}
 b^{d-1,y}_{k,k}C_{j,\ell(d,k)}^{(1,d)} = & \sum_{r=j-d}^{j+d}\mathcal{Y}^{(1,d)}_{r,j}C_{r,\ell(d-1,k)}^{(1,d)} -   \sum_{r=\max\{k-1,1\}}^{d-1}b^{d-2,y}_{k,r}C_{j,\ell(d-2,r)}^{(1,d)} \label{eq:C3} \\
& - \sum_{r=1}^{d}a^{d-1,y}_{r,k}C_{j,\ell(d-1,r)}^{(1,d)} - \sum_{r=1}^{k-1}b^{d-1,y}_{r,k}C_{j,\ell(d,r)}^{(1,d)} \notag
\end{align}
and for  $n \geq d$, we compute $\mathbf{C}^{(1,d)}_{\ell(n+1,k)}$ for $k = 1, \ldots, d$ using
\begin{align}
b^{n,y}_{k,k}C_{j,\ell(n+1,k)}^{(1,d)}  = & \sum_{r=j-d}^{j+d}\mathcal{Y}^{(1,d)}_{r,j}C_{r,\ell(n,k)}^{(1,d)} - \sum_{r=k}^{d}b^{n-1,y}_{k,r}C_{j,\ell(n-1,r)}^{(1,d)} \label{eq:C4} \\
& - \sum_{r=1}^{d}a^{n,y}_{r,k}C_{j,\ell(n,r)}^{(1,d)} - \sum_{r=1}^{k-1}b^{n,y}_{r,k}C_{j,\ell(n+1,r)}^{(1,d)}.  
\end{align}
The coefficients $b^{n,x}_{k,k}$, $b^{n,y}_{n+2,n+1}$, $b^{d-1,y}_{k,k}$ and $b^{n,y}_{k,k}$ on the left-hand sides of, respectively, (\ref{eq:C1})--(\ref{eq:C4}) are determined by the requirement that the OPs have unit norm, e.g., in (\ref{eq:C1}) we require $\left(\mathbf{C}^{(1,d)}_{\ell(n+1,k)} \right)^{\top} \mathbf{C}^{(1,d)}_{\ell(n+1,k)} = 1 = \la Y_{n+1,k}^{(1,d)}, Y_{n+1,k}^{(1,d)} \ra_{\g_{1,d},w}$. 
\end{prop}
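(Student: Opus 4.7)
The plan is to derive each of (\ref{eq:C1})--(\ref{eq:C4}) by combining the block three-term recurrences (\ref{eq:xYmdrec})--(\ref{eq:yYmdrec}) for $\YY_n^{(1,d)}$ with the substitution $Y_{n,k}^{(1,d)} = \mathbf{P}^{(1,d)}\mathbf{C}^{(1,d)}_{\ell(n,k)}$ and the multiplication identities $x\mathbf{P}^{(1,d)} = \mathbf{P}^{(1,d)}\mathcal{X}^{(1,d)}$, $y\mathbf{P}^{(1,d)} = \mathbf{P}^{(1,d)}\mathcal{Y}^{(1,d)}$ from Section~\ref{sect:multP}. Once this substitution is made, one reads off the coefficient of $P^{(1,d)}_{j,0}$ on both sides to convert a quasimatrix identity into a scalar recurrence on the connection coefficients $C^{(1,d)}_{j,\ell(\cdot,\cdot)}$.

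Concretely, for (\ref{eq:C1}) I would take (\ref{eq:xYmdrec}) at level $n \leq d-2$, extract its $k$-th column, and isolate $b^{n,x}_{k,k}Y^{(1,d)}_{n+1,k}$ on the left-hand side. Inserting the $P$-expansion and using the tridiagonality of $\mathcal{X}^{(1,d)} = J(w)$, the leading term on the right becomes $\sum_{r=j-1}^{j+1}\mathcal{X}^{(1,d)}_{r,j}C^{(1,d)}_{r,\ell(n,k)}$. Equation (\ref{eq:C2}) is obtained in the same manner from (\ref{eq:yYmdrec}) at column $k = n+1$; the wider window $r = j-d,\ldots,j+d$ reflects the bandwidth $(d,d)$ of $\mathcal{Y}^{(1,d)} = \phi(J(w))$. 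Equations (\ref{eq:C3}) and (\ref{eq:C4}) are then read off similarly from (\ref{eq:yYmdrec}) applied respectively at the transition level $n = d$ (where $\YY^{(1,d)}_{d-1}$ was built by the primary sequence but $\YY^{(1,d)}_d$ by the secondary sequence) and in the fully secondary regime $n \geq d$. The diagonal coefficients $b^{n,x}_{k,k}$, $b^{n,y}_{n+2,n+1}$, $b^{d-1,y}_{k,k}$, $b^{n,y}_{k,k}$ on each left-hand side are pinned down, after the right-hand side has been assembled, by the unit-norm constraint $\bigl(\mathbf{C}^{(1,d)}_{\ell(n+1,k)}\bigr)^{\top}\mathbf{C}^{(1,d)}_{\ell(n+1,k)} = 1$, which equals $\|Y^{(1,d)}_{n+1,k}\|^2_{\g_{1,d},w}$ by Proposition~\ref{prop:Pmorthog}.

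The restricted $r$-ranges on the right-hand sides record the sparsity of the off-diagonal blocks $B^{(1,d)}_{n,x}$, $B^{(1,d)}_{n,y}$ inherited from the chosen orthogonalisation order. Writing the primary-sequence inputs at step $n+1$ as $v_1 = xY_{n,1},\ldots,v_{n+1} = xY_{n,n+1}, v_{n+2} = yY_{n,n+1}$ (and $v_r = yY_{n,r}$ in the secondary case), Gram--Schmidt against $\CV_{<n+1}(\g_{1,d},w)$ produces $u_s = Y^{(1,d)}_{n+1,s}$ satisfying $\la v_r, u_s\ra_{\g_{1,d},w} = 0$ for $s > r$. Applied at step $n+1$ this yields the cutoff $r \leq k-1$ in the $b^{n,\cdot}_{r,k}$ sums; applied one level down at step $n$ it gives $b^{n-1,x}_{k,r} = 0$ for $r < k$, i.e.\ the lower endpoint $r = k$ in (\ref{eq:C1}). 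The analogous cutoffs in (\ref{eq:C4}) follow from the same triangularity applied to the secondary sequence.

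The step I expect to be most delicate is the sparsity claim $b^{n-1,y}_{n+1,r} = 0$ for $r < n$ in (\ref{eq:C2}), and its cousin $b^{d-2,y}_{k,r} = 0$ for $r < k-1$ in the transition formula (\ref{eq:C3}). These are \emph{not} immediate from the one-step Gram--Schmidt triangularity above, since the vectors $yY^{(1,d)}_{n-1,r}$ with $r < n$ are not themselves among the inputs of the Gram--Schmidt sequence that produced $Y^{(1,d)}_{n,n+1}$. I would attack these by combining commutativity of multiplication by $x$ and $y$, the already-established upper-triangularity of $B^{(1,d)}_{n-1,x}$, and the orthogonality of $Y^{(1,d)}_{n,n+1}$ both to $\CV_{<n}(\g_{1,d},w)$ and to $\mathrm{span}\{Y^{(1,d)}_{n,1},\ldots,Y^{(1,d)}_{n,n}\}$, in order to rewrite $\la yY^{(1,d)}_{n-1,r}, Y^{(1,d)}_{n,n+1}\ra_{\g_{1,d},w}$ as a combination of inner products already known to vanish. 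Tracking this through cleanly at the primary/secondary transition $n = d$, where the lower endpoint becomes $r = \max\{k-1,1\}$ rather than $r = k$, is the main book-keeping hurdle.
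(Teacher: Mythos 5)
Your overall strategy coincides with the paper's: substitute $Y^{(1,d)}_{n,k}=\mathbf{P}^{(1,d)}\mathbf{C}^{(1,d)}_{\ell(n,k)}$ into the block three-term recurrences (\ref{eq:xYmdrec})--(\ref{eq:yYmdrec}), use the bandwidths of $\mathcal{X}^{(1,d)}$ and $\mathcal{Y}^{(1,d)}$ as in (\ref{eq:xyP}), take inner products with $P^{(1,d)}_{j,0}$ via (\ref{eq:Cijdef}), and fix the diagonal $b$-coefficients by the unit-norm condition. The cutoffs you obtain from one-step Gram--Schmidt triangularity --- the upper triangularity of $B^{(1,d)}_{n,x}$ in the primary regime, the dense last column of $B^{(1,d)}_{n,y}$, and the upper triangularity of $B^{(1,d)}_{n,y}$ in the secondary regime where the inputs are the $yY^{(1,d)}_{n,k}$ --- are correct and are obtained in essentially the same way in the paper.

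The gap is exactly where you say it is, and your proposed repair is not the paper's and does not obviously close. The claims $b^{n-1,y}_{n+1,r}=0$ for $r<n$ in (\ref{eq:C2}) and $b^{d-2,y}_{k,r}=0$ for $r<k-1$ in (\ref{eq:C3}) are instances of the full structure (\ref{eq:Bnyc1}) of $B^{(1,d)}_{n,y}$ in the primary regime, namely $\la yY^{(1,d)}_{n,r},Y^{(1,d)}_{n+1,s}\ra_{\g_{1,d},w}=0$ for $s>r+1$, including the columns $r\le n$ for which $yY^{(1,d)}_{n,r}$ never enters the Gram--Schmidt sequence. The paper proves this not by commutativity manipulations but by the monomial expansions (\ref{eq:monoYmdspan1})--(\ref{eq:monoYmdspan2}): since $Y^{(1,d)}_{n,r}$ has degree-$n$ part spanned by $x^{n-i}y^{i}$, $0\le i\le r-1$, with nonzero leading coefficient, $yY^{(1,d)}_{n,r}$ has degree-$(n+1)$ part spanned by $x^{n+1-i}y^{i}$, $1\le i\le r$, and these monomials together with everything of degree at most $n$ lie in $\mathrm{span}\{\YY^{(1,d)}_0,\dots,\YY^{(1,d)}_n,Y^{(1,d)}_{n+1,1},\dots,Y^{(1,d)}_{n+1,r+1}\}$; orthogonality to $Y^{(1,d)}_{n+1,s}$ for $s>r+1$ is then immediate. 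Your alternative --- writing $Y^{(1,d)}_{n,n+1}$ as the normalised residual of $yY^{(1,d)}_{n-1,n}$ and attacking $\la yY^{(1,d)}_{n-1,r},Y^{(1,d)}_{n,n+1}\ra_{\g_{1,d},w}$ via commutativity and the triangularity of $B^{(1,d)}_{n-1,x}$ --- generates terms such as $\la y^{2}Y^{(1,d)}_{n-1,r},Y^{(1,d)}_{n-1,n}\ra_{\g_{1,d},w}=\la \phi^{2}Y^{(1,d)}_{n-1,r},Y^{(1,d)}_{n-1,n}\ra_{\g_{1,d},w}$, which involve multiplication by a degree-$2d$ polynomial and have no evident reason to vanish or cancel. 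You should replace that sketch by the degree/span argument above, which is already set up in the discussion of the orthogonalisation sequences preceding the proposition.
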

\begin{proof}
It follows from the definition (\ref{eq:ABxmd}) that $A_{n,x}^{(1,d)}$ and $A_{n,y}^{(1,d)}$ are dense symmetric matrices, e.g.,
\begin{align}
A_{n,x}^{(1,d)} = 
 \left(
\begin{array}{c c c c c}
a_{1,1}^{n,x} & a_{2,1}^{n,x} & a_{3,1}^{n,x} & \cdots & a_{\dim \CV_n,1}^{n,x} \\
a_{2,1}^{n,x} & a_{2,2}^{n,x} & a_{3,2}^{n,x} & \cdots & a_{\dim \CV_n,2}^{n,x} \\
a_{3,1}^{n,x} & a_{3,2}^{n,x} & a_{3,3}^{n,x}  &\cdots & a_{\dim \CV_n,3}^{n,x} \\
\vdots  & \ddots & \ddots & \ddots& \vdots \\
a_{\dim \CV_n,1}^{n,x} & a_{\dim \CV_n,2}^{n,x} & \cdots & \cdots & a_{\dim \CV_n,\dim \CV_n}^{n,x}
\end{array}  \right) \in \RR^{\dim \CV_n \times \dim \CV_n}, \label{eq:Anx}
\end{align}
where $\dim \CV_n = \dim \CV_n(\g_{1,d},w) = n+1$ for $0 \leq n \leq d-1$ and $\dim \CV_n = d$ for $n \geq d$. The three-term recurrences (\ref{eq:xYmdrec})--(\ref{eq:yYmdrec}) and the monomial expansion (\ref{eq:monoYmdspan1}) imply that for $0 \leq n \leq d-2$:
\begin{equation*}
xY_{n,k}^{(1,d)} \in \mathrm{span}\{\YY_{n-1}^{(1,d)},\YY_n^{(1,d)}, Y_{n+1,1}^{(1,d)}, \ldots, Y_{n+1,k}^{(1,d)}     \}, \qquad 1 \leq k \leq n+1,
\end{equation*}
and
\begin{equation*}
yY_{n,k}^{(1,d)}\in \mathrm{span}\{\YY_{n-1}^{(1,d)},\YY_n^{(1,d)}, Y_{n+1,1}^{(1,d)}, \ldots, Y_{n+1,k+1}^{(1,d)}     \}, \qquad 1 \leq k \leq n+1. 
\end{equation*}
Therefore, the matrices $B_{n,x}^{(1,d)}$ and $B_{n,y}^{(1,d)}$ have the following structures 
for $0 \leq n \leq d-2$:
\begin{align}
B_{n,x}^{(1,d)} = 
\left(
\begin{array}{c c c c}
b_{1,1}^{n,x} & b_{1,2}^{n,x}  & \cdots & b_{1,n+1}^{n,x} \\
 0 & b_{2,2}^{n,x} & \cdots & b_{2,n+1}^{n,x} \\
\vdots  & \ddots & \ddots &  \vdots \\
0 & \cdots & 0 &  b_{n+1,n+1}^{n,x} \\
0 & \cdots & 0  & 0
\end{array}  \right) \in \RR^{n+2 \times n+1},  \label{eq:Bnxc1}
\end{align}
and
\begin{align}
B_{n,y}^{(1,d)} =
\left(
\begin{array}{c c c c}
b_{1,1}^{n,y} & b_{1,2}^{n,y}  & \cdots & b_{1,n+1}^{n,y} \\
b_{2,1}^{n,y}  & b_{2,2}^{n,y} & \cdots & b_{2,n+1}^{n,y} \\
0 & b_{3,2}^{n,y} & \cdots & b_{3,n+1}^{n,y}\\
\vdots  & \ddots & \ddots &  \vdots \\
0 & \cdots & 0 &  b_{n+2,n+1}^{n,y} 
\end{array}  \right) \in \RR^{n+2 \times n+1}.  \label{eq:Bnyc1}
\end{align}

The monomial expansions (\ref{eq:monoYmdspan2}) imply that for $n \geq d-1$, 
\begin{equation*}
xY_{n,k}^{(1,d)} \in \mathrm{span}\{\YY_{n-1}^{(1,d)},\YY_n^{(1,d)}, Y_{n+1,1}^{(1,d)}, \ldots, Y_{n+1,k-1}^{(1,d)}     \}, \qquad 1 \leq k \leq d,
\end{equation*}
and
\begin{equation*}
yY_{n,k}^{(1,d)}\in \mathrm{span}\{\YY_{n-1}^{(1,d)},\YY_n^{(1,d)}, Y_{n+1,1}^{(1,d)}, \ldots, Y_{n+1,k}^{(1,d)}     \}, \qquad 1 \leq k \leq d. 
\end{equation*}
Hence, for $n \geq d-1$, 
\begin{align}
B_{n,x}^{(1,d)} = \left(
\begin{array}{c c c c c}
0 & b_{1,2}^{n,x} & b_{1,3}^{n,x}  & \cdots & b_{1,d}^{n,x} \\
0 & 0 & b_{2,3}^{n,x} & \cdots & b_{2,d}^{n,x} \\
\vdots  & \ddots & \ddots  &\ddots &  \vdots \\
0 & \cdots & 0 &0 &  b_{d-1,d}^{n,x} \\
0 & \cdots & \cdots & 0  & 0
\end{array}  \right) \in \RR^{d \times d}  \label{eq:Bnxc2}
\end{align}
and
\begin{align}
B_{n,y}^{(1,d)} = \left(
\begin{array}{c c c c }
b_{1,1}^{n,y} & b_{1,2}^{n,y}  & \cdots & b_{1,d}^{n,y} \\
0 & b_{2,2}^{n,y} & \cdots & b_{2,d}^{n,y} \\
\vdots  & \ddots & \ddots  &  \vdots \\
0 & \cdots & 0 &  b_{d,d}^{n,y} 
\end{array}  \right) \in \RR^{d \times d}. \label{eq:Bnyc2}
\end{align}

The three-term recurrences (\ref{eq:xYmdrec}) and (\ref{eq:yYmdrec}) satisfied by the OPs and the structures of the matrices given above imply that for $0 \leq n \leq d-2$ and $k = 1, \ldots, n+1$
\begin{align}
xY_{n,k}^{(1,d)} &=   \sum_{r=k}^{n}b^{n-1,x}_{k,r}Y_{n-1,r}^{(1,d)} + \sum_{r=1}^{n+1}a^{n,x}_{r,k}Y_{n,r}^{(1,d)} + \sum_{r=1}^{k}b^{n,x}_{r,k}Y_{n+1,r}^{(1,d)} \label{eq:xY1} \\
yY_{n,k}^{(1,d)} &=   \sum_{r=\max\{k-1,1\}}^{n}b^{n-1,y}_{k,r}Y_{n-1,r}^{(1,d)} + \sum_{r=1}^{n+1}a^{n,y}_{r,k}Y_{n,r}^{(1,d)} + \sum_{r=1}^{k+1}b^{n,y}_{r,k}Y_{n+1,r}^{(1,d)}. \label{eq:yY1}
\end{align}
The recurrences satisfied by the OPs for the cases $n = d-1$, $k = 1, \ldots, d$ and $n \geq d$, $k = 1, \ldots, d$ can be found  similarly using (\ref{eq:Anx})--(\ref{eq:Bnyc2}).

Let $\mathcal{X}^{(1,d)}_{r,c}$ and $\mathcal{Y}^{(1,d)}_{r,c}$, $r, c \geq 0$ denote the entries of $\mathcal{X}^{(1,d)}$ and $\mathcal{Y}^{(1,d)}$, respectively, which are defined in (\ref{eq:xym1ops}). Since $\mathcal{X}^{(1,d)}$ is tridiagonal and $\mathcal{Y}^{(1,d)}$ has bandwidths $(d, d)$, we have for $j \geq 1$ that
\begin{align}
xP_{j,0}^{(1,d)}  = \sum_{r=j-1}^{j+1}\mathcal{X}^{(1,d)}_{r,j}P_{r,0}^{(1,d)}, \qquad yP_{j,0}^{(1,d)}  = \sum_{r=j-d}^{j+d}\mathcal{Y}^{(1,d)}_{r,j}P_{r,0}^{(1,d)}. \label{eq:xyP}
\end{align}
It now follows from (\ref{eq:Cijdef}), (\ref{eq:xY1}) and (\ref{eq:xyP}) that for $0 \leq n \leq d-2$ and $k = 1, \ldots, n+1$
\begin{align*}
& \la xY^{(1,d)}_{n,k}, P^{(1,d)}_{j,0} \ra_{\g_{1,d},w} = \sum_{r=k}^{n}b^{n-1,x}_{k,r}C_{j,\ell(n-1,r)}^{(1,d)} + \sum_{r=1}^{n+1}a^{n,x}_{r,k}C_{j,\ell(n,r)}^{(1,d)} + \sum_{r=1}^{k}b^{n,x}_{r,k}C_{j,\ell(n+1,r)}^{(1,d)}  \\
& = \la Y^{(1,d)}_{n,k},x P^{(1,d)}_{j,0} \ra_{\g_{1,d},w} = \sum_{r=j-1}^{j+1}\mathcal{X}^{(1,d)}_{r,j}C_{r,\ell(n,k)}^{(1,d)},
\end{align*}
from which we obtain (\ref{eq:C1}).
The recurrence (\ref{eq:C2}) can similarly be derived using (\ref{eq:Cijdef}), (\ref{eq:yY1}) and (\ref{eq:xyP}). The remaining equations (\ref{eq:C3})--(\ref{eq:C4}) follow from the recurrences satisfied by the OPs for the cases $n = d-1$, $k = 1, \ldots, d$ and $n \geq d$, $k = 1, \ldots, d$. 
\end{proof}

The formulae from which the connection coefficients can be computed for $m = 2$ are given in Appendix~\ref{sect:ACm2rec}.

\subsection{Structure of the connection matrices}\label{sect:Cstruct}
We now aim to deduce the entries of the connection matrices that are zero, which will specify the ranges over which the row indices should vary in the formulae for the connection coefficients for $m = 1$ (given in (\ref{eq:C1})--(\ref{eq:C4})) and for $m = 2$ (given in Appendix~\ref{sect:ACm2rec}).

\begin{prop}\label{prop:C1struct}
For $m = 1$, if $0 \leq n \leq d-2$, then $
C^{(1,d)}_{j,\ell(n+1,k)} = 0$ for
\begin{align}
 j < n+1 \qquad &\text{and} \qquad 1 \leq  k \leq n+2 \label{eq:m1jb1} \\
  j > nd \qquad &\text{and} \qquad 1 \leq k \leq n \label{eq:m1jb2}  \\
 j > nd + 1 \qquad &\text{and} \qquad k = n+1 \\
 j > (n+1)d \qquad & and \qquad k = n+ 2. \label{eq:m1jb4}
\end{align}
If $n \geq d-1$, then  
$
C^{(1,d)}_{j,\ell(n+1,k)} = 0
$
for
\begin{align}
 j < d-1+d(n+2-d) \qquad & and \qquad 1 \leq  k \leq d   \label{eq:m1jb5}\\
 j > nd \qquad & and \qquad 1 \leq k \leq d-2 \label{eq:m1jb6} \\
  j > nd + 1 \qquad & and \qquad k = d-1 \\
 j > (n+1)d \qquad & and \qquad k = d  \label{eq:m1jb8}
\end{align}
\end{prop}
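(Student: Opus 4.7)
The plan is to separate the lower bounds ($j < \cdots$) and upper bounds ($j > \cdots$), as these require quite different arguments.

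For the lower bounds, I would invoke orthogonality. By (\ref{eq:Cijdef}), $C^{(1,d)}_{j,\ell(n+1,k)} = \la Y^{(1,d)}_{n+1,k}, p_j(w)\ra_{\g_{1,d},w}$; since $Y^{(1,d)}_{n+1,k}$ is orthogonal to every element of $\Pi(\g_{1,d})$ of minimal degree on $\g_{1,d}$ at most $n$, it suffices to exhibit $p_j(w)$ as an element of $V_n := \mathrm{span}\{x^a\phi(x)^b : a + b \leq n,\, a, b \geq 0\}$, the image under $y = \phi(x)$ of the minimal-degree-$\leq n$ subspace of $\Pi(\g_{1,d})$. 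For $0 \leq n \leq d-2$ and $j \leq n$, this is immediate: $p_j(w;x)$ has $x$-degree $j < d$, hence lies in $\mathrm{span}\{1,x,\ldots,x^n\} \subset V_n$ (the $b=0$ part). For $n \geq d - 1$ I would study $D_n := \{a + bd : a + b \leq n,\, a, b \geq 0\}$, the set of $x$-degrees reached by the generators of $V_n$. This is a union of intervals $[kd,\, kd + n - k]$ for $k = 0, 1, \ldots, n$, and two consecutive intervals at positions $k$ and $k+1$ are adjacent (no gap) precisely when $k \leq n + 1 - d$; hence those at $k = 0, 1, \ldots, n + 2 - d$ merge into a single interval $[0, J]$ with $J = d(n + 2 - d) + d - 2$. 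A triangularity argument on leading coefficients then yields $V_n \supseteq \mathrm{span}\{1, x, \ldots, x^J\}$, so $p_j(w) \in V_n$ for all $j \leq J$. Since $J + 1 = d - 1 + d(n + 2 - d)$, this establishes the claimed lower bounds.

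For the upper bounds I would proceed by double induction: outer on $n$, inner on $k$, using the recursive identities (\ref{eq:C1})--(\ref{eq:C4}) and the base cases in Proposition~\ref{prop:Cinit}. Writing $h(n,k) := \max\{j : C^{(1,d)}_{j,\ell(n,k)} \neq 0\}$, each recursion expresses $C^{(1,d)}_{j,\ell(n+1,k)}$ as a linear combination of entries $C_{r,\ell(n,k)}$ with $r$ within $\pm 1$ of $j$ when the recursion uses the tridiagonal $\mathcal{X}^{(1,d)}$ (only (\ref{eq:C1})) or within $\pm d$ of $j$ when it uses $\mathcal{Y}^{(1,d)}$ (bandwidths $(d,d)$, occurring in (\ref{eq:C2})--(\ref{eq:C4})), together with $C_{j,\ell(n-1,r)}$, $C_{j,\ell(n,r)}$, and $C_{j,\ell(n+1,r)}$ for $r < k$. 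Hence $h(n+1,k)$ is at most the maximum of $h(n,k)$ plus the bandwidth shift, the $h(n-1,r)$ and $h(n,r)$ values appearing on the right-hand side, and the $h(n+1,r)$ for $r < k$ already handled by the inner induction. Under the three-tier inductive hypothesis --- $h(n,k) \leq (n-1)d$ for $k$ in the bulk, $\leq (n-1)d + 1$ for $k$ penultimate, and $\leq nd$ for $k$ last --- the dominant contribution is $h(n,k) + d$ coming from the $\mathcal{Y}^{(1,d)}$-recursion, and the shift by $d$ promotes each tier to its successor: $(n-1)d \mapsto nd$, $(n-1)d + 1 \mapsto nd + 1$, and $nd \mapsto (n+1)d$, exactly reproducing the three-tier bound at step $n+1$.

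The main obstacle is the case bookkeeping in the induction: one must handle separately the growth phase $0 \leq n \leq d-2$ (where (\ref{eq:C1}) is used for $k \leq n+1$ and (\ref{eq:C2}) for $k = n+2$, with $\dim \CV_{n+1}$ still increasing), the transition at $n = d-1$ governed by (\ref{eq:C3}), and the steady phase $n \geq d$ governed by (\ref{eq:C4}). The $\mathcal{X}^{(1,d)}$-recursion (\ref{eq:C1}) only shifts support by $1$ --- too weak by itself to reach the bulk bound $nd$ --- so one must check that the orthogonalisation against previously computed columns $C_{r,\ell(n,r')}$, some of whose supports already reach $nd$, is exactly what promotes the new column into the correct tier, letting the induction close cleanly in each regime.
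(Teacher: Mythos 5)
Your proposal is correct, but it splits into two halves of different character relative to the paper. For the lower bounds (\ref{eq:m1jb1}) and (\ref{eq:m1jb5}) you argue exactly as the paper does in spirit --- orthogonality of $Y^{(1,d)}_{n+1,k}$ to everything of minimal degree at most $n$ on $\g_{1,d}$, plus a degree count --- except that the paper phrases the count as ``$\deg_{\g_{1,d}}(P^{(1,d)}_{j,0}) < n+1$ for $j$ below the threshold'' (the smallest maximal degree among degree-$(n+1)$ monomials, attained at $x^{d-1}y^{n+2-d}$), whereas you phrase it dually by exhibiting $p_j(w)$ inside $V_n=\mathrm{span}\{x^a\phi^b: a+b\le n\}$ via the interval-merging set $D_n$ and a triangularity argument; your version is the more explicit and self-contained of the two, and the arithmetic ($J+1=d-1+d(n+2-d)$) checks out. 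For the upper bounds you take a genuinely different route: the paper reads them off directly from the monomial expansions (\ref{eq:monoYmdspan1})--(\ref{eq:monoYmdspan2}), computing $\deg Y^{(1,d)}_{n+1,k}=\max\{nd,\,\deg(\text{leading monomial})\}$ after substituting $y=\phi$, while you run a double induction on the recursions (\ref{eq:C1})--(\ref{eq:C4}) tracking the top of the support $h(n,k)$. Your induction does close (I checked the steady phase, the transition (\ref{eq:C3}) at $n=d-1$, and the growth phase, where, as you note, the tier $nd$ enters through the $r=n+1$ term of $\sum_r a^{n,x}_{r,k}C_{j,\ell(n,r)}$ rather than through the $\pm1$ shift of $\mathcal{X}^{(1,d)}$), but it is worth observing that the shapes of $A^{(1,d)}_{n,x}$, $B^{(1,d)}_{n,x}$, $B^{(1,d)}_{n,y}$ that your induction relies on are themselves derived in Proposition~\ref{prop:C1forms} from the very monomial expansions the paper uses directly; so the paper's argument is the shorter path to the same facts, while yours buys a verification that the recursive algorithm, as actually run column by column (modulo $b^{n,x}_{k,k}\neq 0$, etc.), produces columns with exactly the claimed support.
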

\begin{proof}
Recall that the (maximal) degree of an OP $Y^{(1,d)}_{n,k}$ on $\g_{1,d}$ is obtained by setting $y = \phi(x)$ and finding the degree of the resulting polynomial, which is also the degree of $Y^{(1,d)}_{n,k}$ in the $P^{(1,d)}_{n,0}$ basis. We denote this maximal degree by $\deg Y^{(1,d)}_{n,k}$, which is in general different from the (minimal) degree of $Y^{(1,d)}_{n,k}$ on $\g_{1,d}$, viz.~$n$, which is denoted by  $\deg_{\g_{1,d}}(Y^{(1,d)}_{n,k})$. 

To determine the degrees of the OPs $Y^{(1,d)}_{n+1,k}$ in the $P^{(1,d)}_{j,0}$ basis, we let $n \mapsto n+1$ in (\ref{eq:monoYmdspan1}) and (\ref{eq:monoYmdspan2}) and note that among the degree $n+1$ monomial terms,  the leading term has the largest maximal degree, which is, for $0 \leq n \leq d-2$,
\begin{equation}
\deg x^{n+2-k}y^{k-1} = n+2-k+ d(k-1), \qquad k = 1, \ldots,  n+2, \label{eq:maxdc1}
\end{equation}
and for $n \geq d - 1$,
\begin{equation}
\deg x^{d-k}y^{n+1+k-d} = d-k + d(n+1+k-d), \qquad k = 1, \ldots, d.  \label{eq:maxdc2}
\end{equation}
All the OPs of degree $n$ ($\YY_n^{(1,d)}$) appear in the recurrences satisfied by the OPs of degree $n+1$ (see (\ref{eq:xYmdrec})--(\ref{eq:yYmdrec})). Among all monomials of degree $n$ on $\g_{1,d}$, $y^n$ has the largest maximal degree, with $\deg y^n = nd$. We conclude from this and (\ref{eq:maxdc1}) that for $0 \leq n \leq d-2$, $\deg Y^{(1,d)}_{n+1,k} = \max\{nd, n+2-k+ d(k-1)  \}$ and from (\ref{eq:maxdc2}) it follows that for $ n \geq d-1$, $\deg Y^{(1,d)}_{n+1,k} = \max\{nd, d-k + d(n+1+k-d)  \}$. Hence,
\begin{align*}
\deg Y^{(1,d)}_{n+1,k} = \begin{cases}
nd & \text{for } 0 \leq n \leq d-2, 1\leq  k \leq n \text{ and } n \geq d-1,1 \leq k \leq d-2 \\
nd + 1 & \text{for } 0 \leq n \leq d-2, k = n+1 \text{ and } n \geq d-1, k =d-1  \\
(n+1)d & \text{for } 0 \leq n \leq d-2, k = n+2 \text{ and } n \geq d-1, k =d
\end{cases}.
\end{align*}
Since these maximal degrees are also the degrees of the $Y^{(1,d)}_{n+1,k}$ in the $P^{(1,d)}_{j,0}$ basis, they are the upper bounds on $j$ in the expansions $Y^{(1,d)}_{n+1,k} = \sum_{j \geq 0} C^{(1,d)}_{j,\ell(n+1,k)} P^{(1,d)}_{j,0}$. Thus, we have shown that $C^{(1,d)}_{j,\ell(n+1,k)}= 0$ for the cases (\ref{eq:m1jb2})--(\ref{eq:m1jb4}) and (\ref{eq:m1jb6})--(\ref{eq:m1jb8}).

Note from (\ref{eq:maxdc1})--(\ref{eq:maxdc2}) that the smallest maximal degree of a  monomial of degree $n+1$ on $\g_{1,d}$ is attained for $k = 1$, which is $n+1$ for $0 \leq n \leq d-2$ and $d-1 + d(n+2-d)$ for $n \geq d-1$. This implies that
\begin{equation}
\deg_{\g_{1,d}}(P^{(1,d)}_{j,0}) < n+ 1 \text{ for } \begin{cases}
j < n+1 & \text{if } 0 \leq n \leq d-2\\
 j < d-1 + d(n+2-d) & \text{if }  n \geq d-1
\end{cases}.  \label{eq:degp1}
\end{equation} 
Since the  $Y^{(1,d)}_{n+1,k}$ are OPs of degree $n+1$ on $\g_{1,d}$, it follows that  
\begin{equation}
C^{(1,d)}_{j,\ell(n+1,k)} = \la  Y^{(1,d)}_{n+1,k},  P^{(1,d)}_{j,0} \ra_{\g_{1,d},w} = 0 \quad \text{if} \quad \deg_{\g_{1,d}}(P^{(1,d)}_{j,0}) < n+ 1.  \label{eq:c1j0}
\end{equation}
Hence, combining (\ref{eq:degp1}) and (\ref{eq:c1j0}), we have shown that $C^{(1,d)}_{j,\ell(n+1,k)} = 0$ for the cases (\ref{eq:m1jb1}) and (\ref{eq:m1jb5}).
\end{proof}

For $m = 2$, the indices for which $C^{(2,d)}_{j,\ell(n+1,k)} = 0$ are given in Appendix~\ref{Asect:C20}.

\subsection{Numerical stability and computational complexity} The recursive formulae for the connection coefficients given in section~\ref{sect:rec4ms} are numerically unstable. This is to be expected since the formulae arise from the Gram--Schmidt procedure, which is well-known to be unstable~\cite{bjorck} and the instability of the Lanczos algorithm is also well-established (see \cite[section 2.2.3]{Gautschi} and \cite[Supplement D]{chen}). We shall consider the computational cost of our Lanczos (or, equivalently, Gram--Schmidt) algorithm when it is stabilized via  reorthogonalization~\cite{bjorck,chen} (the Lanczos algorithm can also be stabilized if it is performed via Householder transformations~\cite{walker} or Givens rotations~\cite{gragg}).

Due to the instability of the Lanczos procedure, the OPs are not exactly orthogonal in floating point arithmetic. After a new OP has been generated via a new column of the connection matrix (using the formulae in section~\ref{sect:rec4ms} and Appendix~\ref{sect:ACm2rec}), we orthogonalize this OP against lower degree OPs to ensure orthogonality up to the level of roundoff errors. That is, we replace the OP $Y_{n+1,k}^{(m,d)}$ with its re-orthogonalized version:
\begin{equation}
Y_{n+1,k}^{(m,d)} \gets  Y_{n+1,k}^{(m,d)} - \sum_{\substack{0 \leq s \leq n+1\\
1 \leq i \leq \dim\CV_{s}}}\langle Y_{n+1,k}^{(m,d)},   Y^{(m,d)}_{s,i} \rangle_{\g_{m,d},w}\, Y^{(m,d)}_{s,i},  \label{eq:Yreorth}
\end{equation}
which we express in terms of the columns of the connection matrix by setting $Y_{n,k}^{(m,d)} =\mathbf{P}^{(m,d)} \mathbf{C}^{(m,d)}_{\ell(n,k)}$:
\begin{equation*}
\mathbf{C}^{(m,d)}_{\ell(n+1,k)}  \gets \mathbf{C}^{(m,d)}_{\ell(n+1,k)}  - \sum_{\substack{0 \leq s \leq n+1\\
1 \leq i \leq \dim\CV_{s}}} \left[\left( \mathbf{C}^{(m,d)}_{\ell(n+1,k)}  \right)^{\top}\mathbf{C}^{(m,d)}_{\ell(s,i)} \right]\mathbf{C}^{(m,d)}_{\ell(s,i)}.
\end{equation*}
We can use the results in section~\ref{sect:Cstruct} and Appendix~\ref{Asect:C20} (i.e., the fact that the connection matrices are banded) to show that we only need to re-orthogonalize against a constant number of lower degree OPs (as opposed to \emph{all} lower degree OPs). For example, for $m=1$, we have
\begin{equation*}
Y^{(1,d)}_{n+1,k} = \sum_{j = j_{\min}}^{j_{\max}} C^{(1,d)}_{j,\ell(n+1,k)}P^{(1,d)}_{j,0},
\end{equation*}
where $j_{\min} = j_{\min}(n+1,k)$ and $j_{\max} = j_{\max}(n+1,k)$  are given in Proposition~\ref{prop:C1struct}. For instance, for $n \geq d-1$ and $1 \leq k \leq d$, $j_{\min}(n+1,k) = d-1+d(n+2-d)$ and for $k = d$, $j_{\max}(n+1,d) = (n+1)d$.  Likewise, for $m = 2$, in (\ref{eq:y2epxs}), $j$ ranges from $j_{\min}$ to $j_{\max}$, which are given in Appendix~\ref{Asect:C20}. Hence, if in (\ref{eq:Yreorth}) we have $j_{\min}(n+1,k) > j_{\max}(s,i)$, then $\langle Y_{n+1,k}^{(m,d)},   Y^{(m,d)}_{s,i} \rangle_{\g_{m,d},w}$ is identically zero, also in floating point arithmetic, and therefore we only need to re-orthogonalize against OPs $Y^{(m,d)}_{s,i}$ with $j_{\min}(n+1,k) \leq j_{\max}(s,i)$. For $m = 1$, we have to re-orthogonalise $Y^{(m,d)}_{n+1,k}$ against $Y^{(m,d)}_{s,i}$ with $n-d+3 \leq s \leq n+1$ because
\begin{align}
\begin{split}
& \min\left\{ s \in \NN_0 \: :  j_{\min}(n+1,k) \leq j_{\max}(s,i) \right\} \\
&  =\min\left\{ s \in \NN_0 \: : \: d-1+d(n+2-d) \leq s d  \right\} = n - d + 3
\end{split}\label{eq:reorthm1}
\end{align}
and for $m = 2$,  $n-d+4 \leq s \leq n+1$ since
\begin{align}
\begin{split}
& \min\left\{ s \in \NN_0 \: :  j_{\min}(n+1,k) \leq j_{\max}(s,i) \right\} \\
&  =\min\left\{ s \in \NN_0 \: : \: 2d + d(n+1 - d) \leq s d - 1  \right\} = n - d + 4.
\end{split}  \label{eq:reorthm2}
\end{align}

\begin{prop}
For $m = 1, 2$, the connection coefficients and the entries of the Jacobi matrices for OPs of degrees $0, \ldots, N$ (i.e., $\CC^{(m,d)}_n$, $A_{n,x}^{(m,d)}$, $A_{n,y}^{(m,d)}$ for $0 \leq n \leq N$ and $B_{n,x}^{(m,d)}$, $B_{n,y}^{(m,d)}$ for  $0 \leq n \leq N-1$) can be computed in $\mathcal{O}(Nd^4)$ operations without re-orthogonalisation and in $\mathcal{O}(Nd^5)$ operations with re-orthogonalisation.
\end{prop}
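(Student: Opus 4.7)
The plan is to show that each newly generated column of the connection matrix can be produced in $\mathcal{O}(d^3)$ operations from the previously generated columns, after which multiplication by $d$ (for the number of columns per block, i.e., $\dim\CV_n \le d$) and by $N$ (for the blocks) yields the stated $\mathcal{O}(Nd^4)$ bound. Re-orthogonalisation will add a factor of $d$ on top.

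First, I would extract from Propositions~\ref{prop:C1struct} (and its $m=2$ counterpart in the appendix) the crucial quantitative consequence that each column $\mathbf{C}^{(m,d)}_{\ell(n,k)}$ of the connection matrix has at most $\mathcal{O}(d^2)$ nonzero entries, since $j_{\max}-j_{\min}$ is bounded by a polynomial in $d$ independent of $n$ (explicitly $(d-1)^2 + \mathcal{O}(d)$). I would also record that $\mathcal{X}^{(m,d)}$ has block-bandwidths $(1,1)$ (with $2\times 2$ blocks for $m=2$) and $\mathcal{Y}^{(m,d)}$ has block-bandwidths $(d-1,d-1)$, so they have scalar bandwidths $\mathcal{O}(d)$.

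Next, I would analyse the cost of producing a single new column via the recurrences in Proposition~\ref{prop:C1forms} (and its $m=2$ analogue). The three contributions are: (i) one matrix-vector product of a banded matrix (bandwidth $\mathcal{O}(d)$) against a vector with $\mathcal{O}(d^2)$ nonzero entries in consecutive positions, which costs $\mathcal{O}(d)\cdot\mathcal{O}(d^2)=\mathcal{O}(d^3)$ operations; (ii) the inner products needed to fill in the relevant entries of the $A$- and $B$-blocks via (\ref{eq:ABCmd}), of which at most $\mathcal{O}(d)$ are required per column (the recurrences (\ref{eq:xYmdrec})--(\ref{eq:yYmdrec}) only couple blocks of index $n-1,n,n+1$, each having at most $d$ columns), and each costs $\mathcal{O}(d^2)$ since the column supports have length $\mathcal{O}(d^2)$, giving $\mathcal{O}(d^3)$; (iii) the corresponding subtraction of $\mathcal{O}(d)$ scalar multiples of sparse vectors of length $\mathcal{O}(d^2)$, again $\mathcal{O}(d^3)$. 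The final normalisation is $\mathcal{O}(d^2)$ and hence negligible. Aggregating: $\mathcal{O}(d^3)$ per column, $\mathcal{O}(d^4)$ per block, $\mathcal{O}(Nd^4)$ overall.

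For the re-orthogonalisation variant, I would invoke (\ref{eq:reorthm1})--(\ref{eq:reorthm2}) to bound the number of previously generated OPs against which $Y^{(m,d)}_{n+1,k}$ must be re-projected: only those $Y^{(m,d)}_{s,i}$ with $n-d+3\le s\le n+1$ (for $m=1$) or $n-d+4\le s\le n+1$ (for $m=2$) can have non-vanishing inner product with $Y^{(m,d)}_{n+1,k}$ in exact arithmetic, so at most $\mathcal{O}(d)\cdot d=\mathcal{O}(d^2)$ such OPs. Each corresponding inner product and scalar-multiple-subtraction of sparse columns of length $\mathcal{O}(d^2)$ costs $\mathcal{O}(d^2)$, contributing $\mathcal{O}(d^4)$ per column, $\mathcal{O}(d^5)$ per block, and $\mathcal{O}(Nd^5)$ overall.

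The main obstacle I anticipate is bookkeeping the various cases that arise in Propositions~\ref{prop:C1forms}, \ref{prop:C1struct} and the appendix formulae for $m=2$ cleanly enough that the $\mathcal{O}(d^2)$ support length of each column, and the $\mathcal{O}(d)$ count of neighbouring columns in the recurrence, are exhibited uniformly in $n$. In particular, one has to check that the ``initial'' regime $n<d$ contributes only $\mathcal{O}(d^5)$ operations in total (a constant in $N$), which is absorbed by $\mathcal{O}(Nd^4)$ as soon as $N\ge d$; this case distinction is the only subtle point, and can be handled by noting that for $n<d$ the column supports are even shorter (of length $n+1$ or bounded multiples thereof), so the per-column cost is no worse than in the generic $n\ge d$ regime.
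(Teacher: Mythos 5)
Your proposal is correct and follows essentially the same route as the paper: both arguments rest on the $\mathcal{O}(d^2)$ support of each connection-matrix column (from the banded structure in Proposition~\ref{prop:C1struct} and Appendix~\ref{Asect:C20}), the $\mathcal{O}(d)$ bandwidth of $\mathcal{Y}^{(m,d)}$ making the dominant matrix--vector product cost $\mathcal{O}(d^3)$ per column over $\mathcal{O}(Nd)$ columns, and the bounds (\ref{eq:reorthm1})--(\ref{eq:reorthm2}) limiting re-orthogonalisation to $\mathcal{O}(d^2)$ prior OPs at $\mathcal{O}(d^2)$ each. Your breakdown is slightly more itemised than the paper's (which singles out $\mathcal{Y}^{(m,d)}\mathbf{C}^{(m,d)}_{\ell(n,k)}$ as the dominant cost and treats the rest implicitly), but the substance is identical.
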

\begin{proof}
Starting with the first few columns of $C^{(m,d)}$ given in Proposition~\ref{prop:Cinit}, the subsequent columns of the connection matrix are computed using (for $m= 1$) (\ref{eq:C1})--(\ref{eq:C4}), with $j$ ranging between the bounds given in (\ref{eq:m1jb1})--(\ref{eq:m1jb8}) and (for $m = 2$) using the formulae in  Appendix~\ref{sect:ACm2rec}, with the row indices ranging between the bounds given in Appendix~\ref{Asect:C20}.

To compute the entries of $A_{n,x}^{(m,d)}$,  $B_{n,x}^{(m,d)}$, $A_{n,y}^{(m,d)}$ and $B_{n,y}^{(m,d)}$, we use the fact that
\begin{align}
&\la xY_{n_1,i_1}^{(m,d)}, Y_{n_2,i_2}^{(m,d)} \ra_{\g_{m,d},w} =
\left(\mathbf{C}^{(m,d)}_{\ell(n_1,i_1)}\right)^{\top}\mathcal{X}^{(m,d)}\mathbf{C}^{(m,d)}_{\ell(n_2,i_2)}  
= \left(\mathbf{C}^{(m,d)}_{\ell(n_2,i_2)}\right)^{\top}\mathcal{X}^{(m,d)}\mathbf{C}^{(m,d)}_{\ell(n_1,i_1)}, \label{eq:C1ips}
\end{align}
which follows from (\ref{eq:ypc}), the orthonormality of the $P$-polynomials and the symmetry of $\mathcal{X}^{(m,d)}$. A similar identity holds for inner products of the form $\la yY_{n_1,i_1}^{(m,d)}, Y_{n_2,i_2}^{(m,d)} \ra_{\g_{m,d},w}$.

The most computationally expensive part of the procedure is the computation of $\mathcal{Y}^{(m,d)}\mathbf{C}^{(m,d)}_{\ell(n,k)}$.  For $m = 1$, since the indices of the nonzero rows of $\mathbf{C}^{(1,d)}_{\ell(n,k)}$ range from $d-1+d(n-d+1)$ to $nd$, see (\ref{eq:m1jb5}) and (\ref{eq:m1jb8}) with $n+1\mapsto n$, the column $\mathbf{C}^{(1,d)}_{\ell(n,k)}$ has $\mathcal{O}(d^2)$ nonzero entries. Since $\mathcal{Y}^{(1,d)})$ has bandwidths $(d,d)$, the computation of $\mathcal{Y}^{(1,d)}\mathbf{C}^{(1,d)}_{\ell(n,k)}$ requires $\mathcal{O}(d^3)$ operations, which has to be computed $\mathcal{O}(Nd)$ times, hence the computational complexity is $\mathcal{O}(Nd^4)$ without re-orthogonalisation. By the same reasoning, for $m = 2$ the computational complexity is also $\mathcal{O}(Nd^4)$, however the computational cost is reduced by approximately a factor of $4$ since $\mathcal{Y}^{(2,d)}$ and  $\mathbf{C}^{(2,d)}_{\ell(n,k)}$ have roughly half the number of nonzero entries compared to $\mathcal{Y}^{(1,d)}$ and  $\mathbf{C}^{(1,d)}_{\ell(n,k)}$.

The re-orthogonalisation step (see (\ref{eq:reorthm1})--(\ref{eq:reorthm2})) costs $\mathcal{O}(d^4)$ operations per OP for $m = 1, 2$ and hence the overall computational cost of our Lanczos procedure with re-orthogonalisation is  $\mathcal{O}(Nd^5)$.
\end{proof}

The linear computational complexity (in $N$) of the Lanczos method applied to an orthogonal basis compares favourably to the cubic $\mathcal{O}(N^3d^3)$ complexity (without re-orthogonalisation) of orthogonalising the bivariate monomial bases\footnote{\label{foot:chol}One orthogonalisation method is to compute the Cholesky factor $L$ of the Gram matrix $\mathcal{M}$,  where $\mathcal{M} = \langle \mathbf{M}, \mathbf{M} \rangle_{\g_{m,d},w} = LL^{\top}$, and where $\mathbf{M}$ is a quasimatrix whose columns are the monomial basis functions in (\ref{eq:gmdmonob}). Then  the OPs are given by $\mathbf{Y}^{(m,d)} = \mathbf{M} L^{-\top}$. For OPs of degrees $0, \ldots, N$ this has $\mathcal{O}(N^3d^3)$ complexity since it requires the Cholesky decomposition of a $\nu \times \nu$ finite section of the dense matrix $\mathcal{M}$ with $\nu = \mathcal{O}(Nd)$. } in (\ref{eq:gmdmonob}).

We remark that if the results of Propositions~\ref{prop:C1forms}, \ref{prop:C2forms}, \ref{prop:C1struct} and~\ref{prop:C2struct} are used to compute the connection coefficients for the cases in Proposition~\ref{prop:exOPs}, then the same OP bases as in (\ref{eq:m1d1Ops})--(\ref{eq:evenquarOPs}) are obtained.

\subsection{Numerical experiments}

For $m = 1$, if we compare the upper and lower bounds on $j$ in Proposition~\ref{prop:C1struct} to  the connection matrices shown in Figure~\ref{fig:C_m_1}, we find that the bounds for the cases (\ref{eq:m1jb2})--(\ref{eq:m1jb4}) and (\ref{eq:m1jb6})--(\ref{eq:m1jb8}) are sharp, while the bounds for the cases (\ref{eq:m1jb1}) and (\ref{eq:m1jb5}) are sharp only for $k = 1$. That is, the entries of the first three connection matrices in Figure~\ref{fig:C_m_1} are nonzero for the following rows and columns: for $0 \leq n \leq d-2$, $C^{(1,d)}_{j,\ell(n+1,k)} \neq 0$ for
\begin{align*}
n +1 \leq j \leq nd \qquad & \text{and} \qquad k = 1\\
n +2 \leq  j \leq nd \qquad & \text{and} \qquad 2 \leq k \leq n\\
n +2 \leq  j \leq nd+1 \qquad & \text{and} \qquad k = n+1\\
n +2 \leq  j \leq (n+1)d \qquad & \text{and} \qquad k = n+2
\end{align*}
and for $n \geq d-1$, $C^{(1,d)}_{j,\ell(n+1,k)} \neq 0$ for
\begin{align}
 d-1+d(n+2-d) \leq j \leq nd\qquad & \text{and} \qquad  k = 1 \label{eq:m1ubw} \\
 2d-2+d(n+2-d) \leq  j \leq nd\qquad & \text{and} \qquad  2 \leq k \leq d-2 \notag \\
 2d-1+d(n+2-d) \leq  j \leq nd+1 \qquad & \text{and} \qquad  k = d-1 \notag \\
  2d-1+d(n+2-d) \leq  j \leq (n+1)d \qquad & \text{and} \qquad  k = d \label{eq:m1lbw}
\end{align}

These bounds on the indices of the nonzero entries of $C^{(1,d)}$ imply that the maximum bandwidths of $C^{(1,d)}$ are attained at the entries $C^{(1,d)}_{d-1+d(n+2-d),\ell(n+1,1)}$ (see (\ref{eq:m1ubw})) and $C^{(1,d)}_{(n+1)d,\ell(n+1,d)}$ (see (\ref{eq:m1lbw})). Therefore the bandwidths of $C^{(1,d)}$ are $(\lambda, \mu)$ with $\lambda = \mu = \lambda_1$, where 
\begin{equation*}
\lambda_1 = \frac{(d-2)(d-1)}{2}
\end{equation*}
because 
\begin{equation*}
\lambda = \ell(n+1,1) - \left[d-1+d(n+2-d)\right] = \frac{(d-2)(d-1)}{2},
\end{equation*}
where $\ell$ is defined in (\ref{eq:ldef}), and
\begin{equation*}
\mu = (n+1)d -  \ell(n+1,d)  = \frac{(d-2)(d-1)}{2}.
\end{equation*}

Similar to the case $m = 1$, for $m = 2$ we find that in the connection matrices in Figure~\ref{fig:Cexamples_curve}, the lower bounds on the row indices $j$ with $C^{(2,d)}_{j,\ell(n+1,k)} = 0$ given in Appendix~\ref{Asect:C20} are sharp, while the upper bounds (\ref{eq:m2jub1}), (\ref{eq:m2jub2}) and (\ref{eq:m2jub3}) are sharp only in the case $k = 1$. The maximum bandwidths are therefore attained at the connection coefficients corresponding to $P^{(2,d)}_{j,0}$ with $j = (n+1)d/2$ (which is the largest among the maximal degrees in (\ref{eq:m2monomaxdc2})--(\ref{eq:m2monomaxdc3}), which is attained for $k = d$ and $n$ odd) and  $P^{(2,d)}_{j,1}$ with $j = d+ d(n+1-d)/2$   (which is the smallest among the maximal degrees in (\ref{eq:m2monomaxdc2})--(\ref{eq:m2monomaxdc3}), which is attained for $k = 1$, with $\mathrm{mod}(n+1-d,2)=0$). That is, the maximum bandwidths are attained at  $C_{(n+1)d-1,\ell(n+1,d)}^{(2,d)}$  and $C_{2d + d(n+1 - d),\ell(n+1,1)}^{(2,d)}$.
Hence the bandwidths of the  connection matrices in Figure~\ref{fig:Cexamples_curve}  are $(\lambda, \mu) = (\lambda_2, \lambda_2)$, where
\begin{equation*}
\lambda_2 =  \frac{d(d-3)}{2}
\end{equation*}
This is because the number of rows from the main diagonal to the former entry is
\begin{equation*}
(n+1)d - 1 - \ell(n+1,d) = \frac{d(d-3)}{2},
\end{equation*}
and for the latter entry, the number is
\begin{equation*}
\ell(n+1,1) - \left[2d + d(n+1 - d)   \right] = \frac{d(d-3)}{2}.
\end{equation*}



The bottom-right frame of Figures~\ref{fig:C_m_1} and~\ref{fig:Cexamples_curve} show that if $w$ and $\phi$ are even functions, then the connection matrix has roughly half the number of nonzero entries compared to the case where either $w$ or $\phi$ is not even. This is because the multiplication matrices $\mathcal{X}^{(m,d)}$ and $\mathcal{Y}^{(m,d)}$ have roughly half the number of nonzero entries for even $w$ and $\phi$.

\section{Conclusion}\label{sect:conc}
\setcounter{equation}{0}

For the cases $m = 1, 2$, we have constructed bivariate OPs on the algebraic curves $y^m = \phi$. For $m > 2$, if we generalise the $P$-polynomials 
\begin{equation*}
P^{(m,d)}_{n,k} = y^k p_{n-k}(\phi^{2k/m} w;x), \qquad k = 0, \ldots, m-1,
\end{equation*}
and the inner product 
\begin{align*}
  \la f,g \ra_{\g_{m,d},w} = \begin{cases}  
    \displaystyle{\int}  f\left(x, \sqrt[m]{\phi(x)} \right) g\left(x, \sqrt[m]{\phi(x)} \right)  w(x) \d x,& m \text{ odd}, \\ 
   \displaystyle{\int} \left[  f\left(x, \sqrt[m]{\phi(x)} \right) g\left(x, \sqrt[m]{\phi(x)} \right) \right. &  \\
       \left.\:\:\: + f\left(x, - \sqrt[m]{\phi(x)} \right) g\left(x, -\sqrt[m]{\phi(x)} \right)\right] w(x) \d x,       
       & m \text{ even}, 
      \end{cases}
\end{align*} 
then the $P$-polynomials are not orthogonal with respect to $\la \cdot ,\cdot \ra_{\g_{m,d},w}$ for $m > 2$. We can nevertheless still construct the OPs on $y^m = \phi$ as linear combinations of the $P$-polynomials. However, the computation of the connection matrix would require the Cholesky factorisation of a dense Gram matrix, which has cubic complexity, see footnote~\ref{foot:chol}. For $m = 1, 2$, we showed  that the connection matrix is banded but for $m > 2$, the connection matrix will in general be upper triangular.

Topics for future research include the development of the theory of interpolation, quadrature and orthogonal series on the curves $y^m = \phi$, as was done for quadratic and cubic curves in~\cite{OX2,FOX}. As an application, OPs on $y^m = \phi(x)$ could be used to approximate functions with algebraic singularities of the form $\sqrt[ m]{\phi(x)}$ at the zeros of $\phi$, again as was done in~\cite{OX2,FOX} for square-root and cubic singularities. Another application of the OPs  on $y^m = \phi$ is the construction of OPs inside $y^m = \phi$ and their associated surfaces of revolution for the purpose of spectral methods for partial differential equations. 

%
%
%
%
%
%
%
%
%


\begin{appendix}

\section{Recurrence relations for the connection coefficients for $m = 2$}\label{sect:ACm2rec}
\setcounter{equation}{0}


\begin{prop} \label{prop:C2forms}
For $m = 2$, the connection coefficients satisfy the following equations: suppose that $0 \leq n \leq d-2$, then for $k = 1, 3, \ldots, n+1$
\begin{align*}
 & \hspace{-0 cm} b^{n,x}_{k,k}C^{(2,d)}_{2j-1,\ell(n+1,k)} = \b_{j-1}(w)C^{(2,d)}_{2j-3,\ell(n,k)}  + \a_{j}(w) C^{(2,d)}_{2j-1,\ell(n,k)} + \b_{j}(w)C^{(2,d)}_{2j+1,\ell(n,k)}  \\
 & - \sum_{\substack{r=k \\ \mathrm{mod}(r,2) = \mathrm{mod}(k,2)}}^{n}b^{n-1,x}_{k,r}C^{(2,d)}_{2j-1,\ell(n-1,r)} - \sum_{\substack{r > 0 \\ \mathrm{mod}(r,2)=\mathrm{mod}(k,2)}}^{n+1}a^{n,x}_{r,k}C^{(2,d)}_{2j-1,\ell(n,r)} \notag \\
 & - \sum_{\substack{r > 0 \\ \mathrm{mod}(r,2)=\mathrm{mod}(k,2)}}^{k-2}b^{n,x}_{r,k}C^{(2,d)}_{2j-1,\ell(n+1,r)};  \notag
\end{align*}
and for $k = 2, 4, \ldots, n+1$,
\begin{align*}
 & \hspace{-0 cm} b^{n,x}_{k,k}C^{(2,d)}_{2j,\ell(n+1,k)} = \b_{j-2}(\phi w)C^{(2,d)}_{2j-2,\ell(n,k)}  + \a_{j-1}(\phi w) C^{(2,d)}_{2j,\ell(n,k)} + \b_{j-1}(\phi w)C^{(2,d)}_{2j+2,\ell(n,k)}  \notag \\
 & - \sum_{\substack{r=k \\ \mathrm{mod}(r,2) = \mathrm{mod}(k,2)}}^{n}b^{n-1,x}_{k,r}C^{(2,d)}_{2j,\ell(n-1,r)} 
  - \sum_{\substack{r > 0 \\ \mathrm{mod}(r,2)=\mathrm{mod}(k,2)}}^{n+1}a^{n,x}_{r,k}C^{(2,d)}_{2j,\ell(n,r)} \notag \\
 & - \sum_{\substack{r > 0 \\ \mathrm{mod}(r,2)=\mathrm{mod}(k,2)}}^{k-2}b^{n,x}_{r,k}C^{(2,d)}_{2j,\ell(n+1,r)}. \notag
\end{align*}
Suppose that $0 \leq n \leq d-2$ and $k = n + 1$, if $n$ is odd
\begin{align*}
& \hspace{-0 cm} b^{n,y}_{k+1,k}C_{2j-1,\ell(n+1,k+1)}^{(2,d)}  = \sum_{r = j-d+1}^{j+1} r_{r-1,j} C_{2r,\ell(n,k)}^{(2,d)}   -\sum_{\substack{r=\max\{k-1,1\}\\ \mathrm{mod}(r,2) = \mathrm{mod}(k-1,2)}}^{n}b^{n-1,y}_{k,r}C_{2j-1,\ell(n-1,r)}^{(2,d)}  \\
&  - \sum_{\substack{r>0 \\ \mathrm{mod}(r,2)=\mathrm{mod}(k-1,2) }}^{n+1}a^{n,y}_{r,k}C_{2j-1,\ell(n,r)}^{(2,d)} - \sum_{\substack{r>0 \\ \mathrm{mod}(r,2)=\mathrm{mod}(k-1,2) }}^{k-1}b^{n,y}_{r,k}C_{2j-1,\ell(n+1,r)}^{(2,d)} \notag
\end{align*}
and if $n$ is even
\begin{align*}
& \hspace{-0 cm} b^{n,y}_{k+1,k}C_{2j,\ell(n+1,k+1)}^{(2,d)}  = \sum_{r = j-1}^{j-1+d} r_{j-1,r} C_{2r-1,\ell(n,k)}^{(2,d)}   -\sum_{\substack{r=\max\{k-1,1\}\\ \mathrm{mod}(r,2) = \mathrm{mod}(k-1,2)}}^{n}b^{n-1,y}_{k,r}C_{2j,\ell(n-1,r)}^{(2,d)} \\ 
&  - \sum_{\substack{r>0 \\ \mathrm{mod}(r,2)=\mathrm{mod}(k-1,2) }}^{n+1}a^{n,y}_{r,k}C_{2j,\ell(n,r)}^{(2,d)} - \sum_{\substack{r>0 \\ \mathrm{mod}(r,2)=\mathrm{mod}(k-1,2) }}^{k-1}b^{n,y}_{r,k}C_{2j,\ell(n+1,r)}^{(2,d)}.  \notag
\end{align*}
Suppose that $n = d-1$, then for $k = 2, 4, \ldots, d$
\begin{align*}
& \hspace{-0 cm} b^{n,y}_{k,k}C_{2j-1,\ell(n+1,k)}^{(2,d)}  = \sum_{r = j-d+1}^{j+1} r_{r-1,j} C_{2r,\ell(n,k)}^{(2,d)}   -\sum_{\substack{r=\max\{k-1,1\}\\ \mathrm{mod}(r,2) = \mathrm{mod}(k-1,2)}}^{d-1}b^{n-1,y}_{k,r}C_{2j-1,\ell(n-1,r)}^{(2,d)} \\
&  - \sum_{\substack{r>0 \\ \mathrm{mod}(r,2)=\mathrm{mod}(k-1,2) }}^{d}a^{n,y}_{r,k}C_{2j-1,\ell(n,r)}^{(2,d)} - \sum_{\substack{r>0 \\ \mathrm{mod}(r,2)=\mathrm{mod}(k,2) }}^{k-2}b^{n,y}_{r,k}C_{2j-1,\ell(n+1,r)}^{(2,d)}  \notag
\end{align*}
and for $k = 1, 3, \ldots, d$
\begin{align*}
& \hspace{-0 cm} b^{n,y}_{k,k}C_{2j,\ell(n+1,k)}^{(2,d)}  = \sum_{r = j-1}^{j-1+d} r_{j-1,r} C_{2r-1,\ell(n,k)}^{(2,d)}   -\sum_{\substack{r=\max\{k-1,1\}\\ \mathrm{mod}(r,2) = \mathrm{mod}(k-1,2)}}^{d-1}b^{n-1,y}_{k,r}C_{2j,\ell(n-1,r)}^{(2,d)}\\
&  - \sum_{\substack{r>0 \\ \mathrm{mod}(r,2)=\mathrm{mod}(k-1,2) }}^{d}a^{n,y}_{r,k}C_{2j,\ell(n,r)}^{(2,d)} - \sum_{\substack{r>0 \\ \mathrm{mod}(r,2)=\mathrm{mod}(k,2) }}^{k-2}b^{n,y}_{r,k}C_{2j,\ell(n+1,r)}^{(2,d)}.\notag
\end{align*}
Suppose that $n \geq d$. If $\mathrm{mod}(n+1-d,2)=1$, then for $k = 1, 3, \ldots, d$
\begin{align*}
& \hspace{-0 cm} b^{n,y}_{k,k}C_{2j-1,\ell(n+1,k)}^{(2,d)}  = \sum_{r = j-d+1}^{j+1} r_{r-1,j} C_{2r,\ell(n,k)}^{(2,d)}   -\sum_{\substack{r= k\\ \mathrm{mod}(r,2) = \mathrm{mod}(k,2)}}^{d}b^{n-1,y}_{k,r}C_{2j-1,\ell(n-1,r)}^{(2,d)} \\
&  - \sum_{\substack{r>0 \\ \mathrm{mod}(r,2)=\mathrm{mod}(k-1,2) }}^{d}a^{n,y}_{r,k}C_{2j-1,\ell(n,r)}^{(2,d)} - \sum_{\substack{r>0 \\ \mathrm{mod}(r,2)=\mathrm{mod}(k,2) }}^{k-2}b^{n,y}_{r,k}C_{2j-1,\ell(n+1,r)}^{(2,d)}   \notag
\end{align*}
and this equation also holds if $\mathrm{mod}(n+1-d,2)=0$ and for $k = 2, 4, \ldots, d$. If $\mathrm{mod}(n+1-d,2)=0$, then for $k = 1, 3, \ldots, d$
\begin{align*}
& \hspace{-0 cm} b^{n,y}_{k,k}C_{2j,\ell(n+1,k)}^{(2,d)}  = \sum_{r = j-1}^{j-1+d} r_{j-1,r} C_{2r-1,\ell(n,k)}^{(2,d)}   -\sum_{\substack{r=k \\ \mathrm{mod}(r,2) = \mathrm{mod}(k,2)}}^{d}b^{n-1,y}_{k,r}C_{2j,\ell(n-1,r)}^{(2,d)} \\ 
&  - \sum_{\substack{r>0 \\ \mathrm{mod}(r,2)=\mathrm{mod}(k-1,2) }}^{d}a^{n,y}_{r,k}C_{2j,\ell(n,r)}^{(2,d)} - \sum_{\substack{r>0 \\ \mathrm{mod}(r,2)=\mathrm{mod}(k,2) }}^{k-2}b^{n,y}_{r,k}C_{2j,\ell(n+1,r)}^{(2,d)} \notag
\end{align*}
and this equation also holds if $\mathrm{mod}(n+1-d,2)=1$ and for $k = 2, 4, \ldots, d$.
\end{prop}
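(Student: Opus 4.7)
The plan is to follow the strategy of the proof of Proposition~\ref{prop:C1forms}, adapted to the $m=2$ setting. Four ingredients are required: (i) determining the sparsity structure of the recurrence blocks $A_{n,x}^{(2,d)}, B_{n,x}^{(2,d)}, A_{n,y}^{(2,d)}, B_{n,y}^{(2,d)}$ from the monomial expansions (\ref{eq:monoYmdspan1})--(\ref{eq:monoYmdspan2}); (ii) writing the three-term recurrences (\ref{eq:xYmdrec})--(\ref{eq:yYmdrec}) with the sums restricted accordingly; (iii) taking the inner product of each recurrence with an appropriate element of the $P^{(2,d)}$-basis; (iv) using the multiplication formulae (\ref{eq:ymultyp})--(\ref{eq:ymultyp1}) together with the univariate three-term recurrences for $p_n(w)$ and $p_{n-1}(\phi w)$, and the orthonormality of the $P^{(2,d)}$-basis, to rewrite those inner products as the right-hand sides of the displayed equations.

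The key new ingredient relative to the $m=1$ proof is a parity selection rule. The monomial expansions show that the $y$-exponent of the leading term of $Y_{n,k}^{(2,d)}$ has definite parity: $k-1$ for $0\le n\le d-1$, and $n-d+k$ for $n\ge d$. Since $y^2=\phi(x)$ on $\g_{2,d}$, this fixes whether $Y_{n,k}^{(2,d)}$ lies in $\mathrm{span}\{P_{j,0}^{(2,d)}\}_{j\ge 0}$ (call this the $P_{\cdot,0}$-type) or in $\mathrm{span}\{P_{j,1}^{(2,d)}\}_{j\ge 1}$ (the $P_{\cdot,1}$-type). Multiplication by $x$ preserves this type while multiplication by $y$ flips it, so each entry of $A_{n,x}^{(2,d)}$ and $B_{n,x}^{(2,d)}$ vanishes unless the row and column indices correspond to the same type, and each entry of $A_{n,y}^{(2,d)}$ and $B_{n,y}^{(2,d)}$ vanishes unless they correspond to opposite types. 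This is the origin of the $\mathrm{mod}(r,2)=\mathrm{mod}(k,2)$ and $\mathrm{mod}(r,2)=\mathrm{mod}(k-1,2)$ conditions in the sums.

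After tabulating the parity-refined sparsity structures in the three regimes $0\le n\le d-2$, $n=d-1$, and $n\ge d$ (the index bounds derived exactly as in the proof of Proposition~\ref{prop:C1forms}), I will substitute them into (\ref{eq:xYmdrec})--(\ref{eq:yYmdrec}) to obtain explicit three-term recurrences for the $Y_{n,k}^{(2,d)}$. For the $x$-recurrences I will pair with $P^{(2,d)}_{j,0}$ if $k$ is odd and with $P^{(2,d)}_{j,1}$ if $k$ is even, rewriting $\la xY_{n,k}^{(2,d)},P^{(2,d)}_{j,\cdot}\ra_{\g_{2,d},w}=\la Y_{n,k}^{(2,d)},xP^{(2,d)}_{j,\cdot}\ra_{\g_{2,d},w}$ via the Jacobi recurrences for $p_j(w)$ or $p_{j-1}(\phi w)$ to produce the first two displayed identities. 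For the $y$-recurrences I will pair with the opposite-type $P$-polynomial, so that (\ref{eq:ymultyp}) or (\ref{eq:ymultyp1}) contributes the sum of raising coefficients $r_{i,j}$ on the right-hand side; this yields the remaining identities.

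The main obstacle will be the bookkeeping: handling the six cases (three regimes crossed with two parity branches) and the three sums on each right-hand side, all with the correct index ranges. The only genuine subtlety beyond the $m=1$ proof is the transition regime $n=d-1$, where the primary and secondary sequences of orthogonalisation meet and the sparsity pattern of $B_{n-1,y}^{(2,d)}$ changes, so the admissible range of $r$ in the $B_{n-1,y}$-sum there differs from its neighbouring regimes. Once the parity structure and the index ranges are correctly tabulated, each displayed identity follows by routine matching of coefficients.
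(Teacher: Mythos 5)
Your proposal is correct and follows essentially the same route as the paper's proof: establishing that each $Y_{n,k}^{(2,d)}$ is purely of $P_{\cdot,0}$- or $P_{\cdot,1}$-type (the paper's case list (\ref{eq:y2epxs})), deducing the checkerboard sparsity of $A_{n,x}^{(2,d)}$, $B_{n,x}^{(2,d)}$, $A_{n,y}^{(2,d)}$, $B_{n,y}^{(2,d)}$ from the fact that multiplication by $x$ preserves and by $y$ flips the type, and then pairing the recurrences (\ref{eq:xYmdrec})--(\ref{eq:yYmdrec}) with the appropriate $P$-basis element using the univariate three-term recurrences and (\ref{eq:ymultyp})--(\ref{eq:ymultyp1}), exactly as in Proposition~\ref{prop:C1forms}. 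No gaps beyond the bookkeeping you already flag.
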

\begin{proof}
Note from (\ref{eq:YPinm2}), (\ref{eq:C2dindblocks}) and (\ref{eq:c25}) that for $m = 2$, each of the OPs of degrees $0$, $1$ and $2$ can be expressed  as a linear combination of either (i) the $P^{(2,d)}_{n,0}$ polynomials or (ii) the $P^{(2,d)}_{n,1}$ polynomials. In the former case, it is expressed as a polynomial in $x$ and in the latter, as a $y$ times a polynomial in $x$. Since OPs of degree $n+1$ are generated by orthogonalizing either $xY_{n,k}^{(2,d)}$ or  $yY_{n,k}^{(2,d)}$ for $n \geq 2$ and $y^2 = \phi(x)$, it follows that every $Y_{n,k}^{(2,d)}$ can be expanded in terms of either the $P^{(2,d)}_{n,0}$ or the $P^{(2,d)}_{n,1}$ polynomials. In the former case, $C^{(2,d)}_{2j,\ell(n,k)} = 0$, $j \geq 1$  and in the latter, $C^{(2,d)}_{2j-1,\ell(n,k)} = 0$, $j \geq 1$ (see (\ref{eq:Y2exp}) and (\ref{eq:C2coeffs})). We also note that in the former case, the monomial expansion of $Y_{n,k}^{(2,d)}$ contains only even powers of $y$ and in the latter case, only odd powers of $y$. 

From the  monomial expansions (\ref{eq:monoYmdspan1}), for $0 \leq n \leq d-1$, we conclude (since $\a_{k,k}^{(n)}\neq 0$) that if $k$ is odd, then $Y_{n,k}^{(2,d)}$ has an expansion in the $P^{(2,d)}_{n,0}$ polynomials, and hence $C^{(2,d)}_{2j,\ell(n,k)} = 0$, $j \geq 1$. Using similar reasoning and also (\ref{eq:monoYmdspan2}), we list all the cases:
\begin{equation}  \label{eq:y2epxs}
\begin{split}
& 1 \leq n \leq d-1, \qquad \mathrm{mod}(k,2)=1, \qquad Y_{n,k}^{(2,d)}(x,y)  = \sum C^{(2,d)}_{2j-1,\ell} P_{j,0}^{(2,d)}  \\
& 1 \leq n \leq d-1, \qquad \mathrm{mod}(k,2)=0, \qquad  Y_{n,k}^{(2,d)}(x,y)  = \sum C^{(2,d)}_{2j,\ell} P_{j,1}^{(2,d)}  \\
& n \geq d, \qquad \mathrm{mod}(n-d,2)=0, \mathrm{mod}(k,2)=1, \qquad  Y_{n,k}^{(2,d)}(x,y)  = \sum C^{(2,d)}_{2j,\ell} P_{j,1}^{(2,d)} \\
& n \geq d, \qquad \mathrm{mod}(n-d,2)=0, \mathrm{mod}(k,2)=0, \qquad  Y_{n,k}^{(2,d)}(x,y)  = \sum C^{(2,d)}_{2j-1,\ell} P_{j,0}^{(2,d)} \\
& n \geq d, \qquad \mathrm{mod}(n-d,2)=1, \mathrm{mod}(k,2)=1, \qquad  Y_{n,k}^{(2,d)}(x,y)  = \sum C^{(2,d)}_{2j-1,\ell} P_{j,0}^{(2,d)} \\
& n \geq d, \qquad \mathrm{mod}(n-d,2)=1, \mathrm{mod}(k,2)=0, \qquad  Y_{n,k}^{(2,d)}(x,y)  = \sum C^{(2,d)}_{2j,\ell} P_{j,1}^{(2,d)}
\end{split}
\end{equation}
where $j \geq 1$, $\ell = \ell(n,k)$; in the first two cases, $1 \leq k \leq n+1$ and in the final four cases, $1 \leq k \leq d$.

Observe that if $Y_{n,k}^{(2,d)}$ has an expansion in the $P_{j,0}^{(2,d)}$ or $P_{j,1}^{(2,d)}$ polynomials, then so does $xY_{n,k}^{(2,d)}$ but then $yY_{n,k}^{(2,d)}$ has an expansion in, respectively,  the  $P_{j,1}^{(2,d)}$ or $P_{j,0}^{(2,d)}$ polynomials.  We conclude from this and the cases listed above that $A_{n,x}^{(2,d)} \in \RR^{\dim \CV_n \times \dim \CV_n}$ is a symmetric matrix whose odd diagonals are zero since $\la xY^{(2,d)}_{n,k_1}, Y^{(2,d)}_{n,k_2} \ra_{\g_{2,d}} = 0$ if $k_1$ and $k_2$ have opposite parity. This is because, for example, if $Y^{(2,d)}_{n,k_1}$ has an expansion in the $P_{j,1}^{(2,d)}$ polynomials, then $Y^{(2,d)}_{n,k_2}$ has an expansion in the $P_{j,0}^{(2,d)}$ polynomials, which are mutually orthogonal.  Similarly, $A_{n,y}^{(2,d)} \in \RR^{\dim \CV_n \times \dim \CV_n}$ is a symmetric matrix whose even diagonals are zero since $\la y Y^{(2,d)}_{n,k_1}, Y^{(2,d)}_{n,k_2} \ra_{\g_{2,d}} = 0$ if $k_1$ and $k_2$ have the same parity. It also follows that for   $0 \leq n \leq d-1$, the structures of $B^{(2,d)}_{n,x}$ and $B^{(2,d)}_{n,y}$ are the same as those in, respectively, (\ref{eq:Bnxc1}) and (\ref{eq:Bnyc1}) but, additionally, the odd super-diagonals of $B^{(2,d)}_{n,x}$ are zero and the even super-diagonals of $B^{(2,d)}_{n,y}$  are zero. For $n \geq d$, the structures of $B^{(2,d)}_{n,x}$ and $B^{(2,d)}_{n,y}$ are the same as those in, respectively, (\ref{eq:Bnxc2}) and (\ref{eq:Bnyc2}) but the even super-diagonals of $B^{(2,d)}_{n,x}$ are zero and the odd super-diagonals of $B^{(2,d)}_{n,y}$  are zero.  To summarise, the structures of the matrices $A^{(2,d)}_{n,x}$, $A^{(2,d)}_{n,y}$, $B^{(2,d)}_{n,x}$ and $B^{(2,d)}_{n,y}$ are the same to those in (\ref{eq:Anx})--(\ref{eq:Bnyc2}), except that for $m = 2$ they have an additional checkerboard pattern.

The recurrence relations for the connection coefficients can be derived as in the case for $m = 1$ in Proposition~\ref{prop:C1forms}, but using instead (\ref{eq:C2coeffs}),
\begin{align*}
xP_{j,0}^{(2,d)} &=\b_{j-1}(w)P_{j-1,0}^{(2,d)} + \a_{j}(w)P_{j,0}^{(2,d)} + \b_{j}(w)P_{j+1,0}^{(2,d)} \\
xP_{j,1}^{(2,d)} &= \b_{j-2}(\phi w)P_{j-1,1}^{(2,d)} + \a_{j-1}(\phi w)P_{j,1}^{(2,d)} + \b_{j-1}(\phi w)P_{j+1,1}^{(2,d)}
\end{align*}
(see section~\ref{sect:multP})
and (\ref{eq:ymultyp})--(\ref{eq:ymultyp1}), which we combine with the fact that the OPs with $m = 2$ satisfy the same recurrence relations as the OPs for $m = 1$ (e.g., (\ref{eq:xY1})--(\ref{eq:yY1})), but with the additional  checkerboard pattern of $A_{n,x}^{(2,d)}$, $A_{n,y}^{(2,d)}$,  $B_{n,x}^{(2,d)}$ and  $B_{n,y}^{(2,d)}$.

\end{proof}
\section{The cases for which $C^{(2,d)}_{j,\ell(n+1,k)} = 0$}\label{Asect:C20}
\setcounter{equation}{0}

\begin{prop}\label{prop:C2struct}
For $m = 2$, $C^{(2,d)}_{j,\ell(n+1,k)} = 0$ for the following indices:
suppose $0 \leq n \leq d-2$, then
$
C^{(2,d)}_{j,\ell(n+1,k)} = 0
$
for  
\begin{equation}
j < 2n+1 \qquad \text{and } \qquad 1 \leq  k \leq n+2; \label{eq:m2jub1}
\end{equation}
if, additionally, $n$ is even, then $
C^{(2,d)}_{j,\ell(n+1,k)} = 0
$
for 
\begin{eqnarray}
\mathrm{mod}(j,2)=0 \quad  \text{or} &   \quad j > nd - 1 \quad & \text{and}  \quad k = 1, 3, \ldots, n-1  \label{eq:m2codd1} \\
\mathrm{mod}(j,2)=1 \quad  \text{or} &   \quad j > (n-2)d + 4  \quad & \text{and } \quad  k = 2, 4, \ldots, n-2 \label{eq:m2ceven1} \\
\mathrm{mod}(j,2)=1 \quad  \text{or} &   \quad j > (n-2)d + 6  \quad & \text{and } \quad  k = n  \label{eq:m2ceven2} \\
\mathrm{mod}(j,2)=0 \quad  \text{or} &   \quad j > nd  + 1  \quad & \text{and } \quad k = n + 1 \label{eq:m2codd2} \\
\mathrm{mod}(j,2)=1 \quad  \text{or} &   \quad j > nd + 2  \quad & \text{and } \quad k = n + 2 \label{eq:m2ceven3}
\end{eqnarray}
if $n$ is odd (with $0 \leq n \leq d-2$), then
$
C^{(2,d)}_{j,\ell(n+1,k)} = 0
$
for 
\begin{eqnarray*}
\mathrm{mod}(j,2)=0 \qquad  \text{or} &   \qquad j > 1 + (n-1)d \qquad & \text{and }  \qquad k = 1, 3, \ldots, n-2 \\
\mathrm{mod}(j,2)=1 \qquad  \text{or} &   \qquad j > 2 + (n-1)d \qquad & \text{and }  \qquad k = 2, 4, \ldots, n-1 \\
\mathrm{mod}(j,2)=0 \qquad  \text{or} &   \qquad j > 3 + (n-1)d \qquad & \text{and }  \qquad k =n \\
\mathrm{mod}(j,2)=1 \qquad  \text{or} &   \qquad j > 4 + (n-1)d \qquad & \text{and }  \qquad k = n+1 \\
\mathrm{mod}(j,2)=0 \qquad  \text{or} &   \qquad j > (n+1)d - 1 \qquad & \text{and }  \qquad k = n+2
\end{eqnarray*}

Suppose now that $n \geq d-1$. If $\mathrm{mod}(n+1-d,2) = 0$, then $C^{(2,d)}_{j,\ell(n+1,k)} = 0$ for 
\begin{equation}
j < 2d+d(n+1-d) \qquad \text{and} \qquad 1 \leq  k \leq d  \label{eq:m2jub2}
\end{equation}
and if $\mathrm{mod}(n+1-d,2) = 1$, then $
C^{(2,d)}_{j,\ell(n+1,k)} = 0$
for
\begin{equation}
  j < 2d - 3 + d(n+2-d) \qquad  \text{and} \qquad 1 \leq  k\leq d.   \label{eq:m2jub3}
\end{equation}

Suppose $d$ is even and $n \geq d-1$. If $\mathrm{mod}(n+1-d,2) = 0$, then $n$ is odd and  $
C^{(2,d)}_{j,\ell(n+1,k)} = 0
$ for
\begin{eqnarray*}
\mathrm{mod}(j,2)=1 \qquad  \text{or} &   \qquad j > 2 + (n-1)d \qquad & \text{and }  \qquad k = 1, 3, \ldots, d-3  \\
\mathrm{mod}(j,2)=0 \qquad  \text{or} &   \qquad j > 1 + (n-1)d \qquad & \text{and }  \qquad k = 2, 4, \ldots, d-4 \\
\mathrm{mod}(j,2)=0 \qquad  \text{or} &   \qquad j > 3 + (n-1)d \qquad & \text{and }  \qquad k = d-2 \\
\mathrm{mod}(j,2)=1 \qquad  \text{or} &   \qquad j > 4 + (n-1)d \qquad & \text{and }  \qquad k = d-1 \\
\mathrm{mod}(j,2)=0 \qquad  \text{or} &   \qquad j > (n+1)d - 1 \qquad & \text{and }  \qquad k = d
\end{eqnarray*}
and if 
 $\mathrm{mod}(n+1-d,2) = 1$, then $n$ is even and  $
C^{(2,d)}_{j,\ell(n+1,k)} = 0
$ for
\begin{eqnarray*}
\mathrm{mod}(j,2)=0 \qquad  \text{or} &   \qquad j > nd - 1 \qquad & \text{and }  \qquad k = 1, 3, \ldots, d-3 \\
\mathrm{mod}(j,2)=1 \qquad  \text{or} &   \qquad j > (n-2)d + 4 \qquad & \text{and }  \qquad k = 2, 4, \ldots, d-4\\
\mathrm{mod}(j,2)=1 \qquad  \text{or} &   \qquad j > (n-2)d + 6 \qquad & \text{and }  \qquad k = d-2\\
\mathrm{mod}(j,2)=0 \qquad  \text{or} &   \qquad j > nd  + 1 \qquad & \text{and }  \qquad k = d-1\\
\mathrm{mod}(j,2)=1 \qquad  \text{or} &   \qquad j > nd + 2 \qquad & \text{and }  \qquad k = d
\end{eqnarray*}

Suppose $d$ is odd and $n \geq d-1$. If $\mathrm{mod}(n+1-d,2) = 0$, then $n$ is even and  $
C^{(2,d)}_{j,\ell(n+1,k)} = 0
$ for
\begin{eqnarray*}
\mathrm{mod}(j,2)=1 \qquad  \text{or} &   \qquad j > (n-2)d + 4 \qquad & \text{and }  \qquad k = 1, 3, \ldots, d-4\\
\mathrm{mod}(j,2)=0 \qquad  \text{or} &   \qquad j > nd - 1 \qquad & \text{and }  \qquad k = 2, 4, \ldots, d-3\\
\mathrm{mod}(j,2)=1 \qquad  \text{or} &   \qquad j > (n-2)d + 6 \qquad & \text{and }  \qquad k = d-2\\
\mathrm{mod}(j,2)=0 \qquad  \text{or} &   \qquad j > nd  + 1 \qquad & \text{and }  \qquad k = d-1\\
\mathrm{mod}(j,2)=1 \qquad  \text{or} &   \qquad j > nd + 2 \qquad & \text{and }  \qquad k = d
\end{eqnarray*}
If $\mathrm{mod}(n+1-d,2) = 1$, then $n$ is odd and  $
C^{(2,d)}_{j,\ell(n+1,k)} = 0
$ for
\begin{eqnarray*}
\mathrm{mod}(j,2)=0 \qquad  \text{or} &   j > 1 + (n-1)d  \qquad & \text{and }  \qquad  k = 1, 3, \ldots, d-4  \\
\mathrm{mod}(j,2)=1 \qquad  \text{or} &   j > 2 + (n-1)d  \qquad & \text{and }  \qquad  k = 2, 4, \ldots, d-3 \\
\mathrm{mod}(j,2)=0 \qquad  \text{or} &   j > 3 + (n-1)d  \qquad & \text{and }  \qquad  k =d-2 \\
\mathrm{mod}(j,2)=1 \qquad  \text{or} &   j > 4 + (n-1)d  \qquad & \text{and }  \qquad  k = d-1 \\
\mathrm{mod}(j,2)=0 \qquad  \text{or} &   j > (n+1)d - 1  \qquad & \text{and }  \qquad  k = d 
\end{eqnarray*}
\end{prop}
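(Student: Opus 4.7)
The argument proceeds by mirroring the strategy used in the proof of Proposition~\ref{prop:C1struct}, but with additional parity bookkeeping peculiar to $m=2$. The plan is to handle three independent features in turn: the parity constraint on the row index $j$, the lower bound on $j$ coming from the minimum degree of $Y^{(2,d)}_{n+1,k}$ on the curve, and the upper bound coming from the maximum $P$-basis degree of its leading monomials.

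First I would combine the row-parity classification already established in the proof of Proposition~\ref{prop:C2forms} (display~(\ref{eq:y2epxs})) with the orthogonality of the $P$-polynomials to read off the $\mathrm{mod}(j,2)$ constraints. Since $Y^{(2,d)}_{n+1,k}$ expands either only in $\{P^{(2,d)}_{r,0}\}$ (contributing to odd rows $2r-1$) or only in $\{P^{(2,d)}_{r,1}\}$ (contributing to even rows $2r$), every other row of the corresponding column is forced to vanish. The parity of the active family is controlled by $\mathrm{mod}(k,2)$ in the primary regime $0\le n\le d-2$, and by $\mathrm{mod}(n-d,2)\oplus\mathrm{mod}(k,2)$ in the secondary regime $n\ge d-1$, matching each $\mathrm{mod}(j,2)$ clause in the statement.

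Next I would establish the lower bounds on $j$. Since $p_r(w;x)$ has degree $r$ in $x$ and $yp_{r-1}(\phi w;x)$ has total degree $r$ that cannot be reduced modulo $y^2-\phi$, we have $\deg_{\g_{2,d}}(P^{(2,d)}_{r,i})=r$ for $i=0,1$. Because $\deg_{\g_{2,d}}(Y^{(2,d)}_{n+1,k})=n+1$, the identities (\ref{eq:C2coeffs}) force $C^{(2,d)}_{2r-1,\ell(n+1,k)}=0=C^{(2,d)}_{2r,\ell(n+1,k)}$ whenever $r<n+1$, which yields (\ref{eq:m2jub1}). For $n\ge d-1$ the stronger bounds (\ref{eq:m2jub2})--(\ref{eq:m2jub3}) follow by intersecting this minimum-degree obstruction with the parity constraint from the preceding paragraph: the smallest admissible $r$ is the least integer of the correct parity exceeding $n$, and translating this back to $j=2r$ or $2r-1$ gives the displayed thresholds $2d+d(n+1-d)$ and $2d-3+d(n+2-d)$.

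For the upper bounds on $j$ I would compute the maximum $P$-basis degree of each leading monomial of $Y^{(2,d)}_{n+1,k}$. Using the monomial expansions (\ref{eq:monoYmdspan1}) and (\ref{eq:monoYmdspan2}) with $n\mapsto n+1$, each leading term is of the form $x^{a}y^{b}$. Substituting $y^2=\phi$ (a polynomial of degree $d$ in $x$), we have $y^{2s}=\phi^{s}$ contributing $a+ds$ to the $P_{\cdot,0}$-index, while $y^{2s+1}=y\phi^{s}$ contributes $a+ds+1$ to the $P_{\cdot,1}$-index via $P^{(2,d)}_{j,1}=yp_{j-1}(\phi w)/\sqrt{2}$. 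Taking the maximum over the leading monomials of degree $n+1$ together with the contribution $\deg Y^{(2,d)}_{n,k}$ that re-enters through the three-term recurrences (\ref{eq:xYmdrec})--(\ref{eq:yYmdrec}), and then converting each resulting $r$-bound to the row index via $j=2r-1$ or $2r$, produces each of the upper bounds listed in the statement.

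The main obstacle is the number of subcases: the statement splits on the parities of $n$, $k$, and $d$, and on whether $n$ lies in the primary or secondary regime, giving fifteen subcases. The structural content is uniform and small, but each subcase requires separately identifying which leading monomial dominates, computing its $P$-basis degree after the substitution $y^2=\phi$, and checking its parity against the classification from (\ref{eq:y2epxs}); the bookkeeping, rather than any single conceptual step, is the real difficulty.
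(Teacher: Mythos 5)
Your overall strategy --- reading the $\mathrm{mod}(j,2)$ constraints off the classification (\ref{eq:y2epxs}), getting lower bounds on $j$ from minimal curve-degree, and upper bounds from the maximal $P$-basis degree of the leading monomials together with the terms re-entering through the three-term recurrences --- is the same as the paper's, and the parity and upper-bound parts are sound in outline. The genuine gap is in your lower-bound argument for the secondary regime $n\ge d-1$. The premise that $\deg_{\g_{2,d}}(P^{(2,d)}_{r,i})=r$ is false once $r\ge d$: since $y^2=c_dx^d+\cdots$ with $c_d\ne0$, the monomial $x^d$ is congruent to $(y^2-c_{d-1}x^{d-1}-\cdots)/c_d$, which has minimal degree $d-1$ on the curve, so $p_r(w;x)$ and $yp_{r-1}(\phi w;x)$ \emph{do} reduce modulo $y^2-\phi$ for large $r$. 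Consequently ``intersecting the obstruction $r\ge n+1$ with the parity constraint'' cannot yield (\ref{eq:m2jub2})--(\ref{eq:m2jub3}): the least row index of the correct parity with $r>n$ is roughly $2n+2$, whereas the stated threshold is $2d+d(n+1-d)=d(n+3-d)$, which is strictly larger for $d\ge4$ (e.g.\ $d=4$, $n=5$ gives $16$ versus $12$). Your claimed mechanism therefore does not produce the displayed thresholds.

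The correct mechanism, exactly parallel to the proof of Proposition~\ref{prop:C1struct}, is to compute the \emph{smallest maximal degree} among the monomials $x^{d-k}y^{n+1+k-d}$, $k=1,\dots,d$, of minimal degree $n+1$ on $\g_{2,d}$ (equations (\ref{eq:m2monomaxdc2})--(\ref{eq:m2monomaxdc3})). Any $P^{(2,d)}_{r,i}$ with $r$ below that minimum --- attained at $k=1$ and equal to $d+d(n+1-d)/2$ when $\mathrm{mod}(n+1-d,2)=0$, resp.\ $d-1+d(n+2-d)/2$ when $\mathrm{mod}(n+1-d,2)=1$ --- necessarily has minimal degree at most $n$ on the curve and is therefore orthogonal to $Y^{(2,d)}_{n+1,k}$; converting $r$ to the row index $j=2r$ or $j=2r-1$ according to which family is active then gives $2d+d(n+1-d)$ and $2d-3+d(n+2-d)$. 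Without this step the bounds (\ref{eq:m2jub2})--(\ref{eq:m2jub3}) do not follow from your argument; the rest of your sketch matches the paper's proof.
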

\begin{proof}
It follows from the cases listed in (\ref{eq:y2epxs}) whether, for a given column $\mathbf{C}^{(2,d)}_{\ell(n,k)}$, the even-indexed rows or the odd-indexed rows are zero. For example, for $0 \leq n \leq d-1$, if $k$ is odd, then $C^{(2,d)}_{2j,\ell(n,k)} = 0$ for $j \geq 1$.

To determine the upper and lower bounds on the summation indices in (\ref{eq:y2epxs}), we use the same arguments as in Proposition~\ref{prop:C1struct}, which we present here with more brevity.

For $0 \leq n \leq d-2$,
\begin{equation}
\deg x^{n+2-k}y^{k-1} =\begin{cases} n+2-k+ \displaystyle{d\left(\frac{k-1}{2}\right)}, \qquad k = 1, 3, \ldots, \begin{cases}
n+ 1 & n\text{ even} \\
n+2 & n\text{ odd}
\end{cases} \\
n+3-k+ \displaystyle{d\left(\frac{k-2}{2}\right)}, \qquad k = 2, 4, \ldots, \begin{cases}
n+ 2 & n\text{ even} \\
n+1 & n\text{ odd}
\end{cases}.
\end{cases}  \label{eq:m2monomaxd}
\end{equation}  
For $n \geq d-1$ with $\mathrm{mod}(n+1-d,2) = 0$,
  \begin{equation}
\deg x^{d-k}y^{n+1+k-d} =\begin{cases} d-k+1+ \displaystyle{d\left(\frac{n+k-d}{2}\right)}, \qquad k = 1, 3, \ldots, \begin{cases}
d & n\text{ even} \\
d-1 & n\text{ odd}
\end{cases} \\
d-k+ \displaystyle{d\left(\frac{n+1+k-d}{2}\right)}, \qquad k = 2, 4, \ldots, \begin{cases}
d-1 & n\text{ even} \\
d & n\text{ odd}
\end{cases}
\end{cases} \label{eq:m2monomaxdc2}
\end{equation}
and if $n \geq d-1$ and $\mathrm{mod}(n+1-d,2) = 1$, then
\begin{equation}
\deg x^{d-k}y^{n+1+k-d} =\begin{cases} d-k+ \displaystyle{d\left(\frac{n+1+k-d}{2}\right)}, \qquad k = 1, 3, \ldots, \begin{cases}
d-1 & n\text{ even} \\
d & n\text{ odd}
\end{cases} \\
d-k+1+ \displaystyle{d\left(\frac{n+k-d}{2}\right)}, \qquad k = 2, 4, \ldots, \begin{cases}
d & n\text{ even} \\
d-1 & n\text{ odd}
\end{cases}.
\end{cases} \label{eq:m2monomaxdc3}
\end{equation}
By the same reasoning used in Proposition~\ref{prop:C1struct}, we conclude from (\ref{eq:m2monomaxd}) that if $n$ is even, with $0 \leq n \leq d-2$, then
\begin{equation}
\deg Y^{(2,d)}_{n+1,k} = \begin{cases}
dn/2  & k = 1, 3, \ldots, n-1  \\
2 + d(n-2)/2 & k = 2, 4, \ldots, n-2 \\
3 + d(n-2)/2 & k =  n \\
1 + dn/2 & k = n+1, n+2
\end{cases}.  \label{eq:Y2degsc1}
\end{equation}
For $0 \leq n \leq d-2$, it follows from  (\ref{eq:m2monomaxd}) that
\begin{align}
\deg_{\g_{2,d}}( P^{(2,d)}_{j,0} ) < n+1 \quad \text{and} \quad  \deg_{\g_{2,d}}( P^{(2,d)}_{j,1} ) < n+1 \quad \text{for} \quad j < n+1. \label{eq:P2degsc1}
\end{align}
Combining the first and second cases in (\ref{eq:y2epxs}) with (\ref{eq:Y2degsc1}) and (\ref{eq:P2degsc1}), it follows that if $n$ is even, with $0 \leq n \leq d-2$, then
\begin{equation}
Y^{(2,d)}_{n+1,k} = \sum_{j= n+1}^{dn/2}C^{(2,d)}_{2j-1,\ell(n+1,k)}P_{j,0}^{(2,d)}, \qquad k = 1, 3, \ldots, n-1 \label{eq:y2expc1} \\
\end{equation}
\begin{equation}
Y^{(2,d)}_{n+1,k} = \sum_{j= n+1}^{2+d(n-2)/2}C^{(2,d)}_{2j,\ell(n+1,k)}P_{j,1}^{(2,d)}, \qquad k = 2, 4, \ldots, n-2 \label{eq:y2expc2}  \\
\end{equation}
\begin{equation}
Y^{(2,d)}_{n+1,k} = \sum_{j= n+1}^{3+d(n-2)/2}C^{(2,d)}_{2j,\ell(n+1,k)}P_{j,1}^{(2,d)}, \qquad k = n  \\
\end{equation}
\begin{equation}
Y^{(2,d)}_{n+1,k} = \sum_{j= n+1}^{1+dn/2}C^{(2,d)}_{2j-1,\ell(n+1,k)}P_{j,0}^{(2,d)}, \qquad k = n+1   \\
\end{equation}
\begin{equation}
Y^{(2,d)}_{n+1,k} = \sum_{j= n+1}^{1+dn/2}C^{(2,d)}_{2j,\ell(n+1,k)}P_{j,1}^{(2,d)}, \qquad k = n+2.  \label{eq:y2expc5} 
\end{equation}
Note that in (\ref{eq:y2expc1}), the row indices of $C^{(2,d)}_{j,\ell(n+1,k)}$ range from $2n+1$ to $dn-1$, and hence the statements in (\ref{eq:m2jub1})  and (\ref{eq:m2codd1}) that $C^{(2,d)}_{j,\ell(n+1,k)} = 0$ for $j<2n+1$ and $j>dn-1$. Similarly, (\ref{eq:m2ceven1})--(\ref{eq:m2ceven3}) follow from, respectively, (\ref{eq:y2expc2})--(\ref{eq:y2expc5}).

The proof that $C^{(2,d)}_{j,\ell(n+1,k)} = 0$ for the remaining cases is similar. 
\end{proof}


\end{appendix}

\end{document}